\numberwithin{equation}{section}
\newtheorem{thmintro}{Theorem}
\newtheorem{corintro}[thmintro]{Corollary}
\newtheorem{theorem}{Theorem}[section]
\newtheorem{lemma}[theorem]{Lemma}
\newtheorem{corollary}[theorem]{Corollary}
\newtheorem{proposition}[theorem]{Proposition}
\theoremstyle{definition}
\newtheorem{fact}[theorem]{Fact}
\newtheorem*{remark*}{Remark}
\newtheorem{remark}[theorem]{Remark}
\newcommand{\field}[1]{\mathbb{#1}}
\newcommand{\R}{\field{R}}
\renewcommand{\phi}{\varphi}
\newcommand{\eps}{\varepsilon}
\newcommand{\RomanNumeralCaps}[1]
    {\MakeUppercase{\romannumeral #1}}
\newcommand{\II}{\text{\RomanNumeralCaps{2}}}
\renewcommand{\|}{\,\Vert\,}
\title{Riemannian Anosov extension and applications}
\author{Dong Chen, Alena Erchenko, and Andrey Gogolev}
\thanks{2020 \textit{Mathematics Subject Classification}. 37D40, 37D20, 53C24, 53C21.\\
The second author was partially supported by NSF grant DMS-1547145. The third author was partially supported by NSF grants DMS-1823150.}
\newcommand{\Addresses}{{
\bigskip
  \footnotesize

  \textsc{Department of Mathematics, The Ohio State University, Columbus, OH}
	\par\nopagebreak
  \textit{E-mail address:} \texttt{chen.8022@osu.edu}
  \bigskip
  \footnotesize

  \textsc{Department of Mathematics, The University of Chicago, Chicago, IL}
	\par\nopagebreak
  \textit{E-mail address:} \texttt{aerchenko@uchicago.edu}
	
	\bigskip
  \footnotesize

  \textsc{Department of Mathematics, The Ohio State University, Columbus, OH}
	\par\nopagebreak
  \textit{E-mail address:} \texttt{gogolyev.1@osu.edu}

}}
\begin{document}
\baselineskip=14pt

\begin{abstract}
  \begin{sloppypar}
  Let $\Sigma$ be a Riemannian manifold with strictly convex spherical boundary. Assuming absence of conjugate points and that the trapped set is hyperbolic, we show that $\Sigma$ can be isometrically embedded into a closed Riemannian manifold with Anosov geodesic flow. We use this embedding to provide a direct link between the classical Livshits theorem for Anosov flows and the Livshits theorem for the X-ray transform which appears in the boundary rigidity program. Also, we give an application for lens rigidity in a conformal class. 
    \end{sloppypar}
\end{abstract}
\maketitle

\section{Introduction}

A closed Riemannian manifold $(M,g)$ is called {\it Anosov} if the corresponding geodesic flow on the unit tangent bundle $T^1M$ is an Anosov flow. For example, all closed manifolds with strictly negative curvature are Anosov. Special examples of manifolds which are not negatively curved, but carry Anosov geodesic flows are known. The first one is probably due to Eberlein~\cite{E73} who performed a careful local deformation of a hyperbolic manifold to create a small disk of zero curvature. Due to the $C^1$ stability of the Anosov property, Eberlein's example can be perturbed further to create some positive curvature while keeping the Anosov property. Further examples were constructed by Gulliver \cite{G75}, using radially symmetric caps of positive curvature, and by Donnay-Pugh~\cite{DP03} who constructed Anosov surfaces embedded in $\mathbb R^3$.  It is shown in a recent paper \cite{DSW21} that for a geodesic billiard system whose trapped set is hyperbolic and non-grazing, it is possible to produce a smooth model of Axiom A ﬂow for the discontinuous ﬂow deﬁned by the non-grazing billiard trajectories. 

Our main result shows that one can embed certain Riemannian manifolds $(\Sigma, g)$ with boundary and hyperbolic trapped sets isometrically into an Anosov manifold (Recall that the trapped set is the set of geodesics that are defined for all time, and a boundary is called strictly convex if its second fundamental form is positive definite everywhere).

\begin{thmintro}[Theorem ~\ref{thm_anosov_ext}]\label{thm_anosov_intro}
 Let $(\Sigma, g)$ be a compact smooth Riemannian manifold with boundary. Assume that each component of the boundary is a strictly convex set diffeomorphic to a sphere. Also, assume that $(\Sigma, g)$ has no conjugate points and the trapped set for the geodesic flow is hyperbolic. Then, there exists a codimension 0 isometric embedding $(\Sigma,g)\subset (\Sigma^{ext}, g^{ext})$ such that $(\Sigma^{ext}, g^{ext})$ is a closed Anosov manifold.
\end{thmintro}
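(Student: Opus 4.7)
The plan is to construct $(\Sigma^{ext}, g^{ext})$ in two stages: first enlarge $(\Sigma, g)$ slightly across its boundary to a manifold $(\Sigma', g')$ that retains all the hypotheses, and then close up each spherical boundary component of $\Sigma'$ by a carefully chosen negatively curved cap.

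The first stage is a direct extension. Using Fermi coordinates normal to $\partial \Sigma$, I would extend $g$ smoothly to a slightly larger compact manifold $\Sigma'$. Strict convexity of the boundary, absence of conjugate points on a compact sub-manifold, and hyperbolicity of the trapped set are all open conditions under small $C^2$-perturbations, so for a sufficiently short outward push these properties persist on $\Sigma'$. Moreover, one can arrange the extended metric in the collar $\Sigma' \setminus \Sigma$ to have a convenient warped-product form, which simplifies the subsequent gluing.

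For the second stage, to each boundary sphere $\Gamma_i$ of $\Sigma'$ I would attach a closed disk $D_i$ equipped with a warped-product metric of the form $dr^2 + f_i(r)^2 \, h_i$ on $[0, L_i] \times S^{n-1}$, with the standard smoothness conditions at $r = L_i$ collapsing the sphere to a point. Here $h_i$ is chosen so that at $r=0$ the metric and its first-order jet match the induced metric and second fundamental form of $\Gamma_i \subset \Sigma'$, while the warping function $f_i$ is chosen with $f_i''/f_i > 0$ on $(0, L_i)$. This renders the cap strictly negatively curved, free of conjugate points, and forces every geodesic entering $D_i$ to exit in uniformly bounded time. Setting $\Sigma^{ext} \defin \Sigma' \cup \bigcup_i D_i$ and $g^{ext}$ to be the glued metric yields the desired closed manifold with an isometric embedding of $(\Sigma, g)$.

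The main step is to verify that the geodesic flow on $(\Sigma^{ext}, g^{ext})$ is Anosov. I would construct continuous invariant stable/unstable cone fields on $T^1 \Sigma^{ext}$ by patching the hyperbolic splitting in a neighborhood of the trapped set of $\Sigma'$ (provided by assumption) with the cone fields coming from strictly negative curvature in each $D_i$. The absence of conjugate points in $\Sigma'$ ensures the associated Jacobi/Riccati equation has no singularities on the transit region, so cones acquired in one cap survive the passage through $\Sigma'$, and conversely the hyperbolic cones near the trapped set extend across the caps. Every orbit accumulates definite expansion and contraction, either by lingering near the hyperbolic trapped set or by repeatedly visiting the strictly negatively curved caps, and a standard cone criterion (in the spirit of Eberlein or Wojtkowski) then yields the Anosov property.

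The principal obstacle is the quantitative matching of these cone fields across the gluing locus, together with the uniform control of constants along orbits that may bounce between caps arbitrarily many times before approaching the trapped set. The parameters $L_i$ and the warping functions $f_i$ must be tuned so that cones are strictly propagated, not merely preserved, which is the classical difficulty in Anosov gluing constructions going back to Eberlein, Gulliver, and Donnay--Pugh. Here one must additionally reconcile the locally engineered negative-curvature hyperbolicity of the caps with the pre-existing hyperbolic structure on the trapped set inherited from $\Sigma$, which is precisely why the no-conjugate-points and hyperbolic-trapped-set hypotheses enter crucially.
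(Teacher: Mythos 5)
Your high-level strategy — extend slightly, flare out each boundary sphere into a strongly negatively curved region, then verify the Anosov property via a cone/Jacobi criterion in the spirit of Eberlein — is indeed the approach of the paper. However, the proposal glosses over two of the three hard technical points, and the third step (capping with a disk) differs from the paper's construction (gluing to a closed hyperbolic manifold with large injectivity radius) in a way that matters.

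First, the claim that one can pick a warped-product disk $dr^2+f_i(r)^2 h_i$ whose $1$-jet at $r=0$ matches the induced metric and second fundamental form of $\Gamma_i$ is false in general. For a warped product, $\II_{\Gamma_i} = f_i(0)f_i'(0)\,h_i$ is a \emph{scalar multiple} of the induced metric $f_i(0)^2 h_i$, which forces $\Gamma_i$ to be umbilic. The boundary of $\Sigma'$ need not be umbilic. Similarly, ``arranging the extended metric in $\Sigma'\setminus\Sigma$ to have a convenient warped-product form'' is not a consequence of a small outward push; it is precisely the content of Sections~\ref{construction 1}--\ref{sec: metric_ext} of the paper: first deform the transverse metric to $\rho(t-\eps)g_0+f_\ell(t)h$ (where $h=2\II_{S_0}$) so as to drive sectional curvature below $-M_0$ while keeping the hypersurfaces convex, then ``round'' the transverse metric from $h$ to the standard sphere metric $ds^2_{n-1}$ (Proposition~\ref{prop: sectional 2}) while staying in the curvature $\le -M_0$ regime, and only then pass to the warped-product hyperbolic form and smooth out the $C^{1,1}$ junctions (Propositions~\ref{prop: C11 extension}, \ref{prop: smoothify}). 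Both steps require genuine quantitative estimates on the Riemann tensor (Lemmas~\ref{lem: bdd_curv_tensor_C12}, \ref{lem: bdd_curv_tensor_C3}); this is where most of the work lies.

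Second, ``the absence of conjugate points in $\Sigma'$ ensures the Jacobi/Riccati equation has no singularities on the transit region, so cones acquired in one cap survive the passage'' is far weaker than what is needed. No conjugate points only says $J$ does not vanish; it gives no lower bound on $\mu_J = (\log\|J\|)'$ or on $\int \mu_J$, and without such a bound you cannot conclude that the net gain per excursion is positive. The paper's Proposition~\ref{prop: highent_highexit} is exactly this quantitative input: if a perpendicular Jacobi field enters $\Sigma$ with $\mu_J(0)>Q_M$, then $\mu_J(l_g(v))>-Q_M$ and $\int_0^{l_g(v)}\mu_J\,d\tau\ge -C_M$. Its proof is \emph{not} a consequence of compactness or of no conjugate points alone; it requires decomposing the Jacobi field near the trapped set into its stable and unstable invariant components and showing the unstable component is uniformly non-negligible (Proposition~\ref{prop: big_unstable_comp}). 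Your proposal does not supply this.

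Third, the paper does not cap each $\Gamma_i$ with a disk; it glues $(\Sigma^{\delta_0}_{\ell,\eps},g^{\delta_0}_{\ell,\eps})$ into a closed hyperbolic manifold of curvature $-\kappa^2$ with injectivity radius larger than a prescribed $R$. The parameter $R$ is chosen (condition (C4)) so that every maximal geodesic segment in the hyperbolic region $\mathcal{D}_-$ has length at least $R$, and this uniform lower bound is what forces the gain $\int_{\mathcal{D}_-}\mu_J\,dt > Q_0+C_0+2$ that dominates the potential loss $C_0$ on each return through $\Sigma_0$ (Lemma~\ref{lem: mu_change_NSigma}). If you instead cap each $\Gamma_i$ with a negatively curved disk, a geodesic entering nearly tangentially exits after an arbitrarily short chord, so the cap provides no uniform gain, and the bookkeeping of Section~\ref{sec: anosov_ext} breaks. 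One would have to argue separately that short excursions into the cap pair with short, controllable excursions inside $\Sigma'$ near its boundary, and this subtlety is not addressed. The paper's use of a hyperbolic manifold with long injectivity radius (together with convexity of the excised balls, which guarantees a geodesic leaving one sphere cannot re-enter it quickly) sidesteps the problem entirely.
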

\begin{remark*}We do not require $\Sigma$ to be connected. If we do not insist on the embedding being codimensional 0 then it is not hard to apply Nash's embedding theorem to isometrically embed $(\Sigma, g)$ into a high dimensional Euclidean space and then into a horosphere in a manifold of constant negative curvature (We owe this remark to Keith Burns).
\end{remark*}
To the best of our knowledge, the above theorem is the first general result on existence of Anosov extensions. We note that all assumptions except for convexity and diffeomorphism type of the boundary are necessary assumptions to admit an Anosov extension. One fact which immediately follows from  Theorem \ref{thm_anosov_intro} is that for any point in any Riemannian manifold, one can isometrically embed any sufficiently small neighborhood of the given point into a closed Anosov manifold. 

Theorem \ref{thm_anosov_intro} allows one to transfer some results from the setting of closed Riemannian manifolds to the setting of compact Riemannian manifolds with boundary. We proceed with a description of such applications.

Denote by $\partial_-$ (respectively, $\partial_+$) the unit inward (respectively, outward) vectors based on $\partial \Sigma$ (precise definition are given in Section \ref{prel: hyp_trapped_sets}). The {\it lens data} consists of two parts: the length map $l_g: \partial_-\to [0,\infty]$ measuring the time at which $\gamma_v$ hits $\partial\Sigma$ again for all $v\in\partial_-$, and the scattering map $s_g: \partial_-\setminus\Gamma_-\to \partial_+$ associating $v\in \partial_-\setminus\Gamma_-$ with its exiting vector $s_g(v)$. Here $\Gamma_-:=l_g^{-1}(\infty)$. We say that two metrics $g$ and $g'$ on $\Sigma$ are \textit{lens equivalent} if $l_g=l_{g'}$ and $s_g=s_{g'}$. For any metric $g$ on $\Sigma$, denote by $g_U$ the lifted metric on the universal cover $\tilde{\Sigma}$. Two metrics $g$ and $g'$ on $\Sigma$ are called \textit{marked lens equivalent} if the lens data of $g_U$ and $g'_U$ coincide. The \textit{lens rigidity (resp. marked lens rigidity)} problem asks whether lens equivalent (resp. marked lens equivalent) metrics are isometric via a diffeomorphism fixing $\partial \Sigma$. 

Together with an argument of Katok~\cite{K88}, we confirm the following extension of Mukhometov-Romanov result \cite{MR78} in the case when hyperbolic trapped sets are allowed.
\begin{corintro}[Marked lens rigidity in a conformal class]
\label{cor_conf_class}
Let $\rho\colon \Sigma\to\R_+$ be a smooth function such that the metrics $(\Sigma, g)$ and $(\Sigma, \rho^2 g)$ both satisfy the assumptions in Theorem~\ref{thm_anosov_intro}. Assume that $g$ and $\rho^2 g$ are marked lens equivalent. Then, $\rho=1$.
\end{corintro}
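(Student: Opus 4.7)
The strategy is to apply Theorem~\ref{thm_anosov_intro} to embed both conformally related metrics into a common closed Anosov manifold, and then invoke Katok's conformal rigidity for Anosov systems. First, I would apply Theorem~\ref{thm_anosov_intro} to $(\Sigma,g)$ to obtain a closed Anosov extension $(\Sigma^{ext}, g^{ext})$. Next, I would extend $\rho$ to a smooth positive function $\tilde\rho$ on $\Sigma^{ext}$ by declaring $\tilde\rho\equiv\rho$ on $\Sigma$ and $\tilde\rho\equiv 1$ on $\Sigma^{ext}\setminus\Sigma$. For this piecewise definition to produce a smooth function, one needs $\rho$ to agree with $1$ to infinite order along $\partial\Sigma$; this should follow from the standard recovery of the jets of a metric at the boundary from lens/scattering data. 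The metric $\tilde\rho^2 g^{ext}$ then agrees with $g^{ext}$ outside $\Sigma$ and with $\rho^2 g$ inside $\Sigma$. One must verify that its geodesic flow remains Anosov, which uses the conformal invariance of the set of trapped geodesics together with the hyperbolicity of the trapped set of $(\Sigma,\rho^2 g)$, so that the construction in Theorem~\ref{thm_anosov_intro} can be adapted or reapplied.

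Having produced two conformally equivalent Anosov metrics $g^{ext}$ and $\tilde\rho^2 g^{ext}$ on $\Sigma^{ext}$, the next step is to deduce equality of their marked length spectra from the marked lens equivalence on $\tilde\Sigma$. Closed orbits of $g^{ext}$ split into two classes: orbits contained in $\Sigma$, which are exactly the closed orbits of the hyperbolic trapped set of $g$, and orbits crossing $\partial\Sigma$. For the first class, marked length equality follows from the correspondence between trapped closed orbits and shadowing boundary-to-boundary geodesics on the universal cover, together with the assumed equality of lens data on $\tilde\Sigma$ --- precisely the Livshits-type link between boundary data and trapped-set data featured in the abstract. For the second class, each arc crossing $\Sigma$ has length determined by the lens data and thus matches between the two metrics, while the exterior arcs are literally common to $g^{ext}$ and $\tilde\rho^2 g^{ext}$; hence total lengths agree.

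Finally, Katok's argument from \cite{K88} yields that two conformally equivalent Anosov metrics on a closed manifold with the same marked length spectrum must coincide, so $\tilde\rho\equiv 1$ on $\Sigma^{ext}$, which gives $\rho\equiv 1$ on $\Sigma$. I expect the main obstacle to lie in the first stage: verifying that the lens equivalence really does force $\rho$ to match $1$ to infinite order at $\partial\Sigma$, and that the resulting global conformal rescaling preserves the Anosov property on the extension. The transfer of marked lens equivalence to marked length equality for trapped closed orbits, while central, should follow from the boundary-to-interior correspondence established elsewhere in the paper.
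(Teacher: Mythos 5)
Your proposal takes a genuinely different route from the paper, and as written it has real gaps.

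\textbf{What the paper does.} The paper extends \emph{only} $(\Sigma,g)$ to a closed Anosov manifold $(\Sigma^{ext},g^{ext})$, and it never needs $\rho^2 g$ to admit an Anosov extension, nor does it need the extension $\rho^{ext}$ to be smooth --- it extends $\rho$ by the constant $1$ outside $\Sigma$ and the remark after the proof explicitly notes that $\rho^{ext}$ may be discontinuous at $\partial\Sigma$. After normalizing so that $\int_\Sigma \rho^2\,d\mu\le 1$, the Cauchy--Schwarz inequality gives $\int_{\Sigma^{ext}}\rho^{ext}\,d\mu^{ext}<1$ unless $\rho\equiv 1$. Katok's argument from \cite{K88} (Birkhoff ergodic theorem plus Anosov closing lemma applied to the single Anosov flow $g^{ext}$) produces a periodic $g^{ext}$-geodesic $\gamma$ with $\int_\gamma\rho^{ext}\,dt < \mathrm{length}(\gamma,g)$. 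The contradiction is then obtained by elementary minimality: for each component $c$ of $\gamma\cap\Sigma$, the lifted $\rho^2 g$-geodesic $c'$ with the same endpoints is a global minimizer on $\tilde\Sigma$ because $\rho^2 g$ has no conjugate points, and by the marked lens data $\mathrm{length}(c',\rho^2 g)=\mathrm{length}(c,g)$, giving $\int_c\rho^{ext}\,dt \ge \mathrm{length}(c,g)$. Summed over components, this contradicts the Birkhoff estimate. The only input needed about $\rho^2 g$ is absence of conjugate points; its hyperbolic trapped set, its boundary jets, and its Anosov-extendability play no role.

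\textbf{Gaps in your approach.} Your route --- build a second Anosov extension $\tilde\rho^2 g^{ext}$, show marked length spectra coincide, then invoke Katok's conformal marked-length-spectrum rigidity on the closed manifold --- faces several obstacles that are not resolved in your sketch.
\begin{enumerate}
\item You assert that the collar construction of Theorem~\ref{thm_anosov_intro} ``can be adapted or reapplied'' so that the \emph{same} exterior makes $\tilde\rho^2 g^{ext}$ Anosov, but this is not automatic: the constants $Q_0$, $C_0$, $K_g$, $\ell$ in the construction depend on the interior metric, and you would need to re-engineer Sections~\ref{sec: mu_estimate_domain}--\ref{sec: anosov_ext} to build a single exterior that serves both $g$ and $\rho^2 g$ simultaneously. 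This is the same order of difficulty as the main theorem itself.
\item The marked length spectrum argument for closed orbits crossing $\partial\Sigma$ is incorrect as written. The $g^{ext}$-geodesic and the $\tilde\rho^2 g^{ext}$-geodesic in a given free homotopy class are \emph{different curves}; they cross $\partial\Sigma$ at different points, so the exterior arcs are \emph{not} ``literally common'' to the two metrics. To salvage this you would have to argue via minimality (replace each $g$-arc of the $g^{ext}$-geodesic by the $\rho^2 g$-arc with the same endpoints and use that the result is a competitor for the $\tilde\rho^2 g^{ext}$-geodesic), and the reverse inequality again needs $\tilde\rho^2 g^{ext}$ to have a minimizing closed geodesic in each class --- i.e.\ no conjugate points on $\Sigma^{ext}$, which brings you back to gap~(1).
\item The smooth gluing of $\rho$ to $1$ at $\partial\Sigma$ is correctly flagged; it can in principle be handled via \cite{LSU03}, but it is a nontrivial additional step that the paper's proof simply does not require.
\end{enumerate}
In short, your plan replaces a one-line minimality argument with a substantial amount of new construction (a simultaneous Anosov extension for two metrics) and a marked-length-spectrum comparison that is more delicate than it appears. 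The paper's version is both shorter and more general, since it requires less of $\rho^2 g$ and nothing of $\rho$ at the boundary.
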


\begin{remark*}
Corollary \ref{cor_conf_class} 
is related to  the boundary rigidity problem, which asks whether one can reconstruct the Riemannian metric $g$ in the interior from knowing the distance $d_g: \partial \Sigma\times \partial \Sigma\to\mathbb{R}$ between points on the boundary. Michel \cite{M81} conjectured that all simple manifolds are boundary rigid, and the surface case was proved by Pestov-Uhlmann \cite{PU05}. Partial results in higher dimensions can be found in \cite{SU09}, \cite{V09}, \cite{BI10}, \cite{BI13}, \cite{SUV17}, etc. When trapped sets are allowed, the marked lens rigidity is equivalent to marked boundary rigidity, and certain local rigidity results were recently established in \cite{G17}, \cite{GM18}, \cite{L19}, and \cite{L20} in the case when trapped sets are hyperbolic. 
\end{remark*}

Another application is a smooth Livshits theorem for domains with sharp control of regularity of the solution. 

\begin{corintro}[Livshits Theorem for domains]
\label{thm_Livshits}
Let $(\Sigma, g)$ be as in Theorem~\ref{thm_anosov_intro} and let a $C^r$-smooth ($r>0$) function  $\beta\colon T^1\Sigma\to\R$ be  such that its $C^r$-jet vanishes on the boundary $\partial T^1\Sigma$. Assume that for all $v\in\partial_-\backslash \Gamma_-,$
$$
\int_{0}^{l_g(v)}\beta(\gamma_v(t))dt=0.
$$
Then, there exists $u\in C^{r_-}(T^1\Sigma)$ such that $Xu=\beta$ and $u|_{\partial(T^1\Sigma)}=0$, where $X$ is the geodesic spray.
\end{corintro}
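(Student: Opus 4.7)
The plan is to apply Theorem~\ref{thm_anosov_intro} to embed $(\Sigma,g)$ into a closed Anosov manifold $(\Sigma^{ext},g^{ext})$, extend $\beta$ by zero on $T^1\Sigma^{ext}\setminus T^1\Sigma$ to a function $\tilde\beta$ on $T^1\Sigma^{ext}$ (the $C^r$-jet vanishing of $\beta$ on $\partial T^1\Sigma$ is exactly what makes the zero extension $C^r$), and then reduce to the classical smooth Livshits theorem on the closed Anosov manifold. The first step is to verify the Livshits obstruction $\int_\gamma\tilde\beta\,dt=0$ for every periodic orbit $\gamma$ of the geodesic flow on $T^1\Sigma^{ext}$. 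A periodic orbit disjoint from $T^1\Sigma$ contributes zero trivially. An orbit entering and exiting $T^1\Sigma$ decomposes into finitely many maximal sub-arcs in $T^1\Sigma$ with endpoints on $\partial_-$ and $\partial_+$; each contributes zero by the hypothesis of the Corollary, while the exterior pieces contribute zero since $\tilde\beta=0$ there. The remaining case is that $\gamma$ lies in the trapped set of $\Sigma$. Here I would use hyperbolicity of the trapped set together with the shadowing lemma, producing for every large $n$ a vector $v_n\in\partial_-\setminus\Gamma_-$ whose geodesic $\gamma_{v_n}$ shadows $\gamma$ for approximately $n$ periods, with exit time $l_g(v_n)=nT+O(1)$ (where $T$ is the period of $\gamma$). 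Combining the zero hypothesis $\int_0^{l_g(v_n)}\beta(\gamma_{v_n}(t))\,dt=0$ with continuity of $\beta$ and the shadowing estimate gives $n\int_\gamma\tilde\beta\,dt+O(1)=0$, so dividing by $n$ and letting $n\to\infty$ yields $\int_\gamma\tilde\beta\,dt=0$.

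With the cohomological obstruction verified, a smooth version of the Livshits theorem (for example, de la Llave--Marco--Moriy\'on with subsequent regularity refinements) applied to the transitive $C^\infty$ Anosov geodesic flow on $T^1\Sigma^{ext}$ produces a function $\tilde u\in C^{r_-}(T^1\Sigma^{ext})$ with $X\tilde u=\tilde\beta$, unique up to an additive constant. To obtain the boundary condition, observe that $\tilde u$ is constant along each maximal orbit segment contained in $T^1\Sigma^{ext}\setminus T^1\Sigma$, since $\tilde\beta$ vanishes on this set. Let $P$ denote the first-return map of the geodesic flow on $T^1\Sigma^{ext}$ to the cross-section $\partial_+$: for a generic $v\in\partial_+$, the forward flow exits $T^1\Sigma$, traverses the exterior, re-enters $T^1\Sigma$ at some $\Psi(v)\in\partial_-\setminus\Gamma_-$, and exits again at $P(v)=s_g(\Psi(v))\in\partial_+$. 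Then
\[
\tilde u(P(v))-\tilde u(v)=0+\int_0^{l_g(\Psi(v))}\beta(\gamma_{\Psi(v)}(t))\,dt=0
\]
by constancy of $\tilde u$ on the exterior piece and the hypothesis of the Corollary. Since the Anosov flow on $T^1\Sigma^{ext}$ is topologically transitive, $P$ has a dense orbit in $\partial_+$, and the continuous $P$-invariant function $\tilde u|_{\partial_+}$ is therefore constant. Combining with $\tilde u\circ\Psi=\tilde u$ on $\partial_+$ gives $\tilde u|_{\partial_-}$ equal to the same constant; tangential boundary vectors inherit this value by continuity. Subtracting this constant from $\tilde u$ and restricting to $T^1\Sigma$ produces the desired $u\in C^{r_-}(T^1\Sigma)$.

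The main obstacles in this plan are the treatment of periodic orbits lying inside the hyperbolic trapped set in the cohomological check, handled by the shadowing argument above, and the promotion of the Livshits solution on $\Sigma^{ext}$ to one satisfying the prescribed boundary vanishing, handled by the cocycle identity coming from the hypothesis together with topological transitivity of the return map of the closed Anosov geodesic flow.
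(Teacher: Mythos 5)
Your overall approach coincides with the paper's: extend $(\Sigma,g)$ to a closed Anosov manifold by Theorem~\ref{thm_anosov_intro}, extend $\beta$ by zero (valid since the $C^r$-jet vanishes on the boundary), verify the cohomological obstruction for every closed geodesic of $\Sigma^{ext}$, apply the classical Livshits theorem, and deduce boundary vanishing of $u$ from a density argument. Your boundary step via the Poincar\'e return map to $\partial_+$ is a reformulation of the paper's ``dense geodesic plus Newton's formula'' argument; both rest on the fact that $u$-increments across consecutive boundary crossings vanish.

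There is, however, a real gap in your treatment of closed geodesics $\gamma$ lying in the trapped set $\Lambda$. You claim that ``hyperbolicity of the trapped set together with the shadowing lemma'' produces, for each large $n$, a vector $v_n\in\partial_-\setminus\Gamma_-$ whose geodesic shadows $\gamma$ for roughly $n$ periods. But hyperbolicity of $\Lambda$ by itself does not yield such $v_n$: to build the relevant pseudo-orbit you need orbits that enter through $\partial_-$, come close to $\gamma$, and later exit through $\partial_+$, and without further input $\Lambda$ could a priori be an attractor or a repeller, in which case one of these transitions is impossible. The heart of the paper's Lemma~\ref{lemma_aprx} is precisely to rule this out by proving that $\Lambda$ is nowhere dense (via local product structure and an ergodicity argument on the closure of the interior of $\Lambda$); only then can one find a nearby periodic orbit that intersects $\partial\Sigma$ and paste it to $\gamma$ using the Anosov closing lemma and the exponential-slacking estimate. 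In your framework the fix is cheap: the Anosov geodesic flow on $T^1\Sigma^{ext}$ preserves Liouville measure and is ergodic, while $\Lambda$ is a proper closed invariant subset, so $\Lambda$ has zero Liouville measure and hence empty interior; thus non-trapped points are dense near $\gamma$ and supply the entering and exiting legs of your pseudo-orbit. You should state this explicitly, and also record that the $O(1)$ error uniform in $n$ comes from the exponential-slacking estimate applied to the H\"older function $\tilde\beta$, since otherwise one might worry that the shadowing error grows linearly in $n$.
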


Here $r_-=r$ if $r$ is not an integer. If $r$ is an integer then $r_-=r-1+\textup{Lip}$.  Corollary \ref{thm_Livshits} was also proved in \cite[Proposition 5.5]{G17}, and the proof there applies to $u\in H^s(T^1\Sigma)$ with $s>0$. Our proof is more geometric and covers the H\"{o}lder regularity. 

\begin{remark*}
The reason why Livshits theorem is restricted to functions which are flat on the boundary is that, otherwise, the standard bootstrap argument for solution of the cohomological equation~\cite{LMM86} does not work. However, notice that our condition is not a restriction for the potential application to the deformation lens rigidity (as in \cite{G17}) due to a result of Lassas-Sharafutdinov-Uhlmann who recover the jet of the metric from local lens data~\cite{LSU03}. 
\end{remark*}

\begin{remark*} All our results have low regularity versions in the case when $(\Sigma, g)$ has finite regularity which exceeds $C^{3+\alpha}$ for some positive $\alpha>0$.
\end{remark*}

\begin{remark*} 
The basic example to which Theorem A applies is, of course, when $\Sigma$ is a strictly convex ball equipped with a simple metric $g$. In this case the trapped set must be empty since $(\Sigma, g)$ is assumed to have no conjugate points. When $\dim \Sigma=2$ it is easy to make examples which have arbitrary genus, if the genus $>0$ then the trapped set is non-empty. It was pointed out to us by one of the referees that examples which satisfy all assumptions of Theorem A and have non-empty trapped set might not exist in dimensions $\ge 3$.  While we do not know how to prove that this is the case, we agree that the existence of such example, indeed, seems to be unlikely. We would like to point out that interesting higher dimensional examples with non-empty trapped set exist. While, formally speaking, these examples are not covered by Theorem A, existence of Anosov extension for such examples still holds with some adjustments to the proof of Theorem A.

Let $\gamma$ be a closed geodesic in a negatively curved manifold $M$ which does not have self-intersections. Then a small neighborhood $\Sigma$ of the ``core’’ $\gamma$ in $M$ satisfies all the assumptions of Theorem~A except that $\partial\Sigma\simeq S^1\times S^{n-2}$. Note that $\gamma$ constitutes a non-trivial hyperbolic trapped set for $\Sigma$. (Alternatively one can obtain such example by explicitly specifying a negatively curved metric on $\Sigma=S^1\times\mathbb D^n$.)  We note that $\Sigma$ already satisfies the conclusion of Theorem~A since it is isometrically embedded in $M$. However, one can deform the metric, for example by creating islands of positive curvature away from $\partial\Sigma$ and $\gamma$, such that existence of Anosov embedding becomes in no way obvious. For this class of examples the proof remains exactly the same up to Section~\ref{sec: anosov_ext}, where we take advantage of spherical boundary to glue in out extended domains into a hyperbolic manifold with a large injectivity radius. This argument, with some work, can be adjusted to accommodate the above example. Specifically, the large hyperbolic manifold has to be replaces with a hyperbolic manifold which contains a ``large geometric tube’’ with core $\gamma$. Existence of hyperbolic manifolds which contain such ``large geometric tubes’’ was established by Farrell and Jones~\cite[Corollary 3.3]{FJ93} who construct them via a carefully chosen finite cover.
\end{remark*}

\subsection{Outline of the proof of Theorem~\ref{thm_anosov_intro}}\label{outline of the proof} We construct the extension by hand. Firstly, for each boundary component of the given manifold,  we find a metric on a collar that smoothly connects the metric on this boundary to a constant curvature metric. Afterwards, we throw away from a compact manifold of a constant sectional curvature (which has the same dimension as the given manifold) finitely many balls (as many as the number of boundary components in the original manifold) that are sufficiently far away (see Lemma~\ref{lemma: large injectivity radius} and the paragraph before it). Finally, in the resulting manifold with boundary and constant negative curvature, we glue in the given manifold with attached collars. The metric on the collar is constructed in several steps. First, we extend the given metric on the neighborhood of the boundary to the negatively curved metric (Section~\ref{construction 1}). Then, we connect the resulting metric to a rotationally invariant metric in the cylindrical coordinates (Section~\ref{construction 2}). Finally, we extend the result of the previous extension to a metric of constant curvature (Appendix~\ref{app: combine f with construction 2}). The original metric in the collar is $C^{1,1}$ but we smooth it afterwards (Section~\ref{sec: metric_ext}).

To guarantee that the constructed compact Riemannian manifold has Anosov geodesic flow we use the criterion by Eberlein (see Theorem~\ref{EberleinAnosov}). In particular, we first show that the constructed metric does not have conjugate points. Then, we prove that all nonzero perpendicular Jacobi fields are unbounded. Instead of working directly with the Jacobi field, we estimate the growth rates $\mu$ of the logarithm of the square of the norm of nonzero perpendicular Jacobi fields (see ~\eqref{def: mu}) using the comparison Lemma~\ref{lem: comp_thm}. In particular, the absence of conjugate points means that there is no time interval so that $\mu$ tends to infinity as we approach each end of the interval (Proposition~\ref{prop: noconjpts}). By Lemma~\ref{lem: comp_thm} and Remark~\ref{rem: behavior of Riccati solution}, we will need to control what are the values of $\mu$ as the geodesic enters various regions (the given manifold with boundary and various extension pieces that we construct to obtain the compact manifold with Anosov geodesic flow) so that we have a control from below while it is in the specific region (see Figure~\ref{fig: graph_of_mu}). To show that all nonzero perpendicular Jacobi fields are unbounded, it is enough to show that the integral of $\mu$ over a time ray is unbounded.

\subsection{Organization} This paper is organized as follows. In Section \ref{sec: prelim} we set up notation and collect a number of preliminaries from geometry and dynamics. In Section~\ref{sec: proof_corollaries} we prove  Corollaries ~\ref{cor_conf_class} and~\ref{thm_Livshits} using Theorem \ref{thm_anosov_intro}. The estimates for Jacobi field within a slightly larger domain containing $\Sigma$ are carried out in Section \ref{sec: mu_estimate_domain}. The estimates on curvature for certain extension are presented in Sections \ref{construction 1}-\ref{sec: metric_ext}. In Section \ref{sec: anosov_ext} we construct an explicit extension of the metric and prove Theorem~\ref{thm_anosov_intro}.

\subsection{Acknowledgment}  The authors would like to express their gratitude to the referees for valuable suggestions on the improvement of the paper.

 \section{Preliminaries}\label{sec: prelim}

\subsection{Geometry of the tangent bundle}\label{section: geometry tangent bundle}

In this section, we formulate some general facts about the tangent bundle. One can find more details in \cite{E73} and \cite{EO80}.

Let $(M,g)$ be a $C^{2+\alpha}$, $\alpha>0$, $n$-dimensional compact Riemannian manifold with or without a boundary. Denote by $T^1 M$ the unit tangent bundle of $M$.  For any $v\in T^1M$, let $\gamma_{v}$ be the unit speed geodesic in $(M,g)$ such that $\gamma'_{v}(0)=v$. The geodesic flow $\phi_t: T^1M\to T^1M$ is defined by setting $\phi_t(v)=\gamma'_{v}(t)$. A vector field $J(t)$ along $\gamma_v$ is a \textit{Jacobi field} if $J(t)$ satisfies the Jacobi equation
\begin{equation}\label{Jacobi_equation}
J''(t)+R({J(t), \gamma_v'(t)})\gamma_v'(t)=0,
\end{equation}
where $R$ is the Riemann curvature tensor and $'$ corresponds to the covariant differentiation along $\gamma_v$. A Jacobi field is uniquely determined by the values $J(0)$ and $J'(0)$. 

Denote by $\pi: TM\to M$ the canonical projection. For any $\xi\in TTM$, let $c(t)$, for $t\in(-\eps,\eps)$, be a curve on $TM$ with $c'(0)=\xi$. Define the {\it connection map $K: TTM\to TM$} by $K\xi=\nabla_{\pi\circ c}c(0)$. It is well-defined since $\nabla_{\pi\circ c}c(0)$ is independent of the choice of $c$. The map $d\pi\oplus K: TTM\to TM\oplus TM$ is a linear isomorphism. The kernel of $d\pi: TTM\to TM$, denoted by $H$, is called {\it the  horizontal subbundle}, while the kernel $V$ of the connection map $K$ is called {\it the vertical subbundle}. The {\it Sasaki metric} on $TTM$ is defined via
$$\langle \xi, \eta\rangle:=g_{\pi v} (d\pi \xi, d\pi\eta)+g_{\pi v} (K\xi, K\eta)$$
for $\xi,\eta\in T_vTM$. We denote by $|\xi|:=\sqrt{\langle \xi, \xi\rangle}$ the Sasaki norm of $\xi\in TTM$.

\begin{fact}\label{identification_Jacobi}
Now vectors in the tangent space $T_vT^1M$ can be identified with Jacobi fields along $\gamma_v$ in the following way: for any $\xi\in T_v(T^1M)$, we define $J_{\xi}$ to be the unique Jacobi field along $\gamma_v$ with $J_{\xi}(0)=d\pi\xi$ and $J'_{\xi}(0)=K\xi$. 
\end{fact}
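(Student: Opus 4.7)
The statement is a routine consequence of the definitions set up in Section~\ref{section: geometry tangent bundle}, and my plan is a direct verification built on two ingredients.

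First, uniqueness of $J_\xi$ is immediate from the Jacobi equation~\eqref{Jacobi_equation} being a second-order linear ODE along $\gamma_v$: any Jacobi field is determined by its initial pair $(J(0), J'(0))$, so the prescription $J_\xi(0) = d\pi\xi$, $J_\xi'(0) = K\xi$ defines $J_\xi$ uniquely. Thus the real content is that this $J_\xi$ is the \emph{geometrically natural} Jacobi field, i.e.\ the variation field through $\gamma_v$ whose initial data at $s=0$ is $\xi$.

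To exhibit this, I would represent $\xi$ by any smooth curve $c\colon(-\eps,\eps)\to T^1M$ with $c(0)=v$ and $c'(0)=\xi$, form the geodesic variation
\[
\Gamma(s,t):=\pi\phi_t(c(s)) = \gamma_{c(s)}(t),
\]
and set $J(t):=\partial_s\Gamma(s,t)|_{s=0}$. By the standard variational argument for geodesics (differentiating the geodesic equation in $s$), $J$ is a Jacobi field along $\gamma_v$. It remains to match its initial conditions. For the value, $J(0) = \partial_s\pi(c(s))|_{s=0} = d\pi_v(c'(0)) = d\pi\,\xi$. For the covariant derivative, I use the symmetry of mixed covariant derivatives for the Levi-Civita connection together with $\partial_t\Gamma(s,0)=\gamma_{c(s)}'(0)=c(s)$:
\[
J'(0) = \nabla_t\partial_s\Gamma\big|_{(0,0)} = \nabla_s\partial_t\Gamma\big|_{(0,0)} = \nabla_s c(s)\big|_{s=0} = \nabla_{\pi\circ c}\, c\,(0) = K\xi,
\]
the last equality being precisely the definition of the connection map applied to $c'(0)=\xi$. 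By uniqueness, $J=J_\xi$.

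There is no genuine obstacle; the only points that need care are the symmetry $\nabla_t\partial_s = \nabla_s\partial_t$ along a smooth variation (standard for the Levi-Civita connection) and the identification $\partial_t\Gamma(s,0)=c(s)$ as an element of $T_{\pi c(s)}M$, which makes the definition of $K$ directly applicable.
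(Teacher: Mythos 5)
Your proof is correct. The paper states this identification as a \textit{Fact} without proof, deferring to the references \cite{E73} and \cite{EO80}; your variational argument (geodesic variation $\Gamma(s,t)=\gamma_{c(s)}(t)$, symmetry of $\nabla_s\partial_t$ and $\nabla_t\partial_s$, and the definition of the connection map $K$) is exactly the standard derivation given in those sources.
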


The above identification is invariant under the geodesic flow, namely, 
\begin{equation*}
J_{D\phi_t(\xi)}(0)=J_{\xi}(t)\qquad \text{and} \qquad J'_{D\phi_t(\xi)}(0)=J'_{\xi}(t).
\end{equation*} 
In particular, if we fix $\xi\in T_vT^1M$ then $g_{\pi v} (J_{\xi}(t), \gamma'_v(t))$ is independent of $t$. Thus, for any $\xi\in T_vT^1M$, $J_{\xi}$ is perpendicular to $\gamma_v$ if and only if $\langle\xi, X\rangle=0$ where $X$ is the vector field on $TM$ generating the geodesic flow $\phi_t$ on $(M,g)$. We denote the space of Jacobi fields perpendicular to a geodesic $\gamma$ by $\mathcal J(\gamma)$.  

Note that the Sasaki norm of $(d\phi_t)\xi$ is given by
\begin{equation}\label{eq: sasaki_jacobi}
|(d\phi_t)\xi|^2=\|J_{\xi}\|^2(t)+\|J'_{\xi}\|^2(t).
\end{equation}

\subsection{Hyperbolicity} Let $\phi_t\colon M\to M$ be a smooth flow on a Riemannian manifold and let $X$ be its generating vector field.
Recall that an invariant set $\Lambda$ is \textit{$\lambda$-hyperbolic} (where $\lambda>0$) if there exist $C>0$ and a continuous flow-invariant splitting
\begin{equation*}
T_\Lambda M = \mathbb R X\oplus E^u\oplus E^s
\end{equation*}
such that for all $y\in\Lambda$,
\begin{align}
&\|d\phi_t(y)w\|\leq Ce^{-\lambda t}\|w\|, \qquad\forall t>0, \forall w\in E^s(y)\qquad \text{and}\label{su_exp}\\
&\|d\phi_t(y)w\|\leq Ce^{\lambda t}\|w\|, \qquad\forall t<0, \forall w\in E^u(y),\nonumber
\end{align}
where $\|\cdot\|$ is the norm on $T_yM$ induced by the Riemannian metric.
Distributions $E^s$ and $E^u$ are called \textit{stable and unstable subbundles on $\Lambda$}.

If $\Lambda=M$ then $\phi_t$ is called an {\it Anosov flow.} For Anosov flows the classical Livshits Theorem is stated as follows.
\begin{theorem}[\hspace{1sp}\cite{L71, LMM86}] 
\label{Liv}
Let $\varphi_t\colon M\to M$ be a transitive Anosov flow and let $\beta\colon M\to\R$ be a $C^r$ function such that
$$
\int_\gamma\beta(\gamma(t))dt=0
$$
for every periodic orbit $\gamma$. Then there exists $u\in C^{r_-}(M)$ such that $Xu=\beta$, where $X$ is the generator for the geodesic flow.
\end{theorem}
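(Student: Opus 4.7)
The plan is a two-phase argument: first construct a continuous primitive $u$ via the Anosov closing lemma, then upgrade its regularity to $C^{r_-}$ using the hyperbolic structure together with Journé's regularity lemma. For the first phase, pick $p\in M$ with dense forward orbit (such $p$ exists by transitivity) and define, along this orbit,
$$
u(\varphi_t p) \defin \int_0^t \beta(\varphi_s p)\,ds.
$$
To extend $u$ continuously to $M$ it suffices to show it is Hölder on the dense orbit. If $\varphi_{t_1}p$ is close to $\varphi_{t_2}p$ with $t_2>t_1$, then the orbit segment of length $t_2-t_1$ nearly closes, and the Anosov closing lemma produces a periodic orbit $\gamma$ of comparable period that exponentially shadows the segment. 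Because $\int_\gamma \beta = 0$ by hypothesis and $\beta$ is (at worst) Hölder, telescoping the integrals along the segment against the integrals along $\gamma$ and exploiting exponential shadowing gives
$$
\Bigl|\int_{t_1}^{t_2}\beta(\varphi_s p)\,ds\Bigr|\leq C\, d(\varphi_{t_1}p,\varphi_{t_2}p)^\alpha
$$
for some $\alpha>0$. Hence $u$ extends to a Hölder continuous function on $M$ with $Xu=\beta$.

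For the second phase, $Xu=\beta\in C^r$ already forces $u$ to be $C^r$ along flow orbits. For regularity transverse to the flow, fix $x\in M$ and $y\in W^s(x)$; telescoping the cocycle $u\circ\varphi_t-u=\int_0^t \beta\circ\varphi_s\,ds$ and letting $t\to\infty$ yields
$$
u(y)-u(x)=\int_0^\infty\bigl(\beta(\varphi_t x)-\beta(\varphi_t y)\bigr)dt,
$$
which converges since $d(\varphi_t x,\varphi_t y)\to 0$ exponentially. Differentiating under the integral along the stable leaf — justified by $\beta\in C^r$ and the exponential contraction of $d\varphi_t|_{E^s}$ — shows that $u$ is uniformly $C^{r_-}$ along stable leaves. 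The symmetric argument at negative times gives the same along unstable leaves. Finally, Journé's lemma combines leafwise $C^{r_-}$ regularity along the two transverse Hölder foliations $W^s$ and $W^u$ into $C^{r_-}$ regularity on weak stable/unstable leaves, and together with $C^r$ regularity along the flow one concludes $u\in C^{r_-}(M)$.

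The main obstacle is the regularity bootstrap rather than the continuous solution. Leafwise differentiation must be interpreted intrinsically, since the stable/unstable distributions are typically only Hölder transversely and do not arise from smooth vector fields on $M$; controlling higher leafwise derivatives of $\int_0^\infty(\beta\circ\varphi_t x-\beta\circ\varphi_t y)\,dt$ requires balancing the exponential growth of leafwise derivatives of $\varphi_t$ against the exponential decay of $d(\varphi_t x,\varphi_t y)$, which is what confines the top regularity to $r_-$. Moreover, applying Journé's lemma in its sharp form is delicate at integer values of $r$, where the precise meaning $r_-=r-1+\mathrm{Lip}$ must be tracked carefully through each gluing step.
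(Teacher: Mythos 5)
This statement is quoted in the paper as a known result with a citation to \cite{L71, LMM86}; the paper gives no proof of its own. Your outline is exactly the classical argument behind those references --- Anosov closing lemma to build the H\"older solution along a dense orbit, then leafwise smoothness along stable/unstable manifolds upgraded to global $C^{r_-}$ regularity via Journ\'e-type arguments (with the usual care at integer $r$) --- so it is correct in approach and consistent with what the paper relies on.
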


Recall that $r_-=r$ if $r$ is not an integer and $r_-=r-1+\textup{Lip}$ when $r$ is an integer. We will use the following criterion, due to Eberlein, for establishing the Anosov property of geodesic flows. Another proof of this criterion was given Ruggiero \cite{R07} following an idea of M\'{a}n\~{e}. 

\begin{theorem}[\hspace{1sp}\cite{E73}, see also~\cite{R07}]
\label{EberleinAnosov}
Let $\phi_t$ be a geodesic flow on a closed Riemannian
 manifold without conjugate points. Then $\phi_t$ is Anosov if and only if all nonzero perpendicular Jacobi fields are unbounded. \end{theorem}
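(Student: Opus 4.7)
The plan is to exploit Fact~\ref{identification_Jacobi}, which identifies $T_v T^1M$ with $\R X(v)\oplus \mathcal J(\gamma_v)$, so that by~(\ref{eq: sasaki_jacobi}) the Sasaki growth of $d\phi_t\xi$ matches the combined growth of the Jacobi field $J_\xi$ and its covariant derivative.

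For the forward direction, suppose $\phi_t$ is Anosov with splitting $TT^1M = \R X\oplus E^u\oplus E^s$. Let $J$ be a nonzero perpendicular Jacobi field along $\gamma_v$, corresponding to some $\xi\in T_vT^1M$ with $\langle\xi,X\rangle=0$ and $\xi\neq 0$. Decomposing $\xi=aX+\xi^s+\xi^u$, the hypothesis $\xi\neq 0$ together with $\langle\xi,X\rangle=0$ (which precludes $\xi=aX$) forces $(\xi^s,\xi^u)\neq(0,0)$. Then~(\ref{su_exp}) gives exponential blow-up of $|d\phi_t\xi|$ either as $t\to+\infty$ (if $\xi^u\neq 0$) or $t\to-\infty$ (if $\xi^s\neq 0$), so by~(\ref{eq: sasaki_jacobi}) the field $J$ is unbounded on $\R$.

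For the converse, the absence of conjugate points makes the classical Green construction available: for each $v\in T^1M$ and each perpendicular initial value $w$, one obtains stable and unstable Jacobi fields $J^{s/u}_{v,w}$ as limits (as $T\to\infty$) of the Jacobi fields along $\gamma_v$ with value $w$ at $0$ and vanishing at $\pm T$. Their graphs $\{(J^{s/u}(0),(J^{s/u})'(0))\}$ inside $T_vT^1M$ define flow-invariant subbundles $E^s,E^u$ transverse to $\R X$, with $v\mapsto E^{s/u}(v)$ continuous by uniqueness of the limits. The unboundedness hypothesis now forces $E^s\cap E^u=\{0\}$: a vector in the intersection would correspond to a perpendicular Jacobi field bounded forward (since stable) and backward (since unstable), hence bounded on $\R$, hence zero.

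The hard step is upgrading this qualitative boundedness along $E^s$ to the uniform exponential estimate~(\ref{su_exp}). The plan is a compactness-and-contradiction argument on the compact manifold $T^1M$: if uniform one-step contraction of the Sasaki norm along $E^s$ failed, there would exist $v_n\in T^1M$, stable Jacobi fields $J_n\in\mathcal J(\gamma_{v_n})$ of unit Sasaki norm at $0$, and times $t_n\to\infty$ with $\|J_n(t_n)\|^2+\|J_n'(t_n)\|^2\geq\delta>0$. Passing to a subsequence via compactness of $T^1M$ and reparametrizing to start the orbit at $t_n$, the translated Jacobi fields converge to a nonzero perpendicular Jacobi field along a limit geodesic which is uniformly bounded on $(-\infty,0]$; running the same extraction in forward time (or applying the same argument to the reversed flow on $E^u$) produces a bounded nonzero perpendicular Jacobi field on all of $\R$, contradicting the hypothesis. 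A symmetric argument handles $E^u$, and bootstrapping the one-step contraction via flow-invariance of the splitting yields the exponential bounds~(\ref{su_exp}), completing the proof that $\phi_t$ is Anosov.
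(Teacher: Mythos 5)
The paper does not prove this statement itself; it is cited from Eberlein~\cite{E73}, with an alternative argument in Ruggiero~\cite{R07}, so there is no in-paper proof to compare against. Your sketch follows Eberlein's route (Green bundles plus a compactness-and-contradiction step) rather than the Ma\~n\'e-style quasi-Anosov argument of Ruggiero, and the architecture is the right one, but two steps are glossed over and do not follow as written.

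In the forward direction you pass from the unboundedness of $|d\phi_t\xi|$ to the unboundedness of $J_\xi$ ``by~(\ref{eq: sasaki_jacobi}).'' But~(\ref{eq: sasaki_jacobi}) only tells you that $\|J_\xi(t)\|^2+\|J'_\xi(t)\|^2$ is unbounded; a priori it could be $\|J'_\xi\|$ that blows up while $\|J_\xi\|$ remains bounded. You need the unstated (but elementary) lemma that on a compact manifold a perpendicular Jacobi field bounded on all of $\mathbb R$ has bounded covariant derivative: the Jacobi equation and the uniform curvature bound give $\|J''\|\le k^2\|J\|$, and Taylor's theorem in a parallel frame along $\gamma_v$ then bounds $\|J'(t)\|$ by $2\sup\|J\|+\tfrac12\sup\|J''\|$. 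Running the contrapositive --- a bounded $J$ has bounded Sasaki orbit, forcing both $E^s$ and $E^u$ components of $\xi$ to vanish --- is the cleanest way to present this.

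In the converse direction, the compactness argument tacitly requires the translated stable fields $J_n(\cdot+t_n)$ to be \emph{uniformly} bounded on $[-t_n,\infty)$; that is not automatic. It rests on Eberlein's uniform estimate $\|J^s(t)\|\le A\,\|J^s(0)\|$ for $t\ge 0$ --- a genuinely nontrivial consequence of compactness and absence of conjugate points --- together with the bound $\|(J^s)'\|\le k\|J^s\|$ (the paper cites this as~\cite[Proposition 2.11]{E73}) so that unit Sasaki norm at $0$ controls $\|J^s(0)\|$ from below. With that in hand, note that your second, forward-time extraction is unnecessary: each $J_n$ is already a stable Jacobi field, hence uniformly bounded on $[0,\infty)$, so the translated fields are uniformly bounded on all of $[-t_n,\infty)$ and the single limit is a bounded nonzero perpendicular Jacobi field on $\mathbb R$. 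Once these lemmas are supplied (and once you observe that continuity of the Green bundles is a consequence of the resulting uniform hyperbolicity rather than a separate hypothesis), your sketch is a correct rendition of Eberlein's proof.
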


When $\Lambda\neq M$, the following result lets us  extend the hyperbolic structure to a neighborhood of $\Lambda$.
\begin{lemma}[\hspace{1sp}\cite{HPPS70}]\label{lem: loc_hyp}
Let $\Lambda$ be a $\lambda$-hyperbolic set. Then for any $\eps\in (0,\lambda)$,
 there exists an open neighborhood $\mathcal V_\eps$ of $\Lambda$ and extensions $E^s$ and $E^u$ of the stable and unstable subbundles to  $\mathcal V_\eps$ with the following properties:
\begin{enumerate}
\item\label{loc_inv} Local invariance: if an orbit segment $[y, \phi^t(y)]\subset \mathcal V_\eps$ then, $d\phi_t(y)E^s(y)=E^s(\phi_t(y))$ and $d\phi_t(y)E^u(y)=E^u(\phi_t(y))$
\item\label{rate_close} Hyperbolicity: if an orbit segment $[y, \phi^t(y)]\subset \mathcal V_\eps$ then
\begin{align*}
&\|d\phi_t(y)w\|\leq Ce^{-(\lambda-\eps)t}\|w\|, \qquad \forall w\in E^s(y)\qquad\text{and}\\
&\|d\phi_t(y)w\|\geq \frac 1Ce^{(\lambda-\eps)t}\|w\|, \qquad  \forall w\in E^u(y).
\end{align*}
\end{enumerate}
\end{lemma}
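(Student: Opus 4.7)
The plan is to construct the extended bundles by a cone field plus graph transform argument. First, using compactness of $\Lambda$ and continuity of the splitting $T_yM = \mathbb R X(y) \oplus E^s(y) \oplus E^u(y)$ on $\Lambda$, I would extend $E^s, E^u$ to continuous distributions $\tilde E^s, \tilde E^u$ on an open neighborhood $\mathcal U \supset \Lambda$, for instance via a finite trivializing cover and partition of unity, followed by Gram--Schmidt to restore the correct ranks. For a small aperture $\delta > 0$, form cone fields $C^u_\delta(y), C^s_\delta(y)$ around $\tilde E^u(y), \tilde E^s(y)$ inside $T_y M$.

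Next, using that~\eqref{su_exp} holds uniformly on the compact set $\Lambda$ and that $d\phi_\tau$ depends continuously on the base point for each fixed $\tau$, I would select a time $\tau > 0$ large enough to swamp the constant $C$, then a small aperture $\delta$, and then a smaller neighborhood $\mathcal V_\eps \subset \mathcal U$, so that the following strict cone invariance holds whenever $[y,\phi_\tau(y)] \subset \mathcal V_\eps$: $d\phi_\tau(y) C^u_\delta(y) \subset C^u_\delta(\phi_\tau(y))$ with every vector therein expanded by at least $e^{(\lambda - \eps)\tau}$, and analogously $d\phi_{-\tau}$ strictly contracts $C^s_\delta$ at rate $e^{-(\lambda-\eps)\tau}$. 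Iterating in steps of $\tau$ and interpolating on the leftover time yields the hyperbolicity bounds~\eqref{rate_close} for any vectors lying in these cones, along any orbit segment trapped in $\mathcal V_\eps$.

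To convert cones into honest invariant subbundles, I would apply a graph transform: for $y \in \mathcal V_\eps$, let $T^-(y) \in [0, +\infty]$ be the supremum of $t \geq 0$ with $\phi_{-s}(y) \in \mathcal V_\eps$ for all $s \in [0, t]$, and define
\[
E^u(y) \defin \bigcap_{0 \leq s \leq T^-(y)} d\phi_s\bigl( C^u_\delta(\phi_{-s}(y))\bigr),
\]
which collapses to a subspace of dimension $\dim E^u(y|_\Lambda)$ by the strict cone contraction established above; $E^s$ is defined symmetrically using forward orbit segments. Local invariance~\eqref{loc_inv} is then immediate by construction, since for $[y, \phi_t(y)] \subset \mathcal V_\eps$ with $t>0$ one has $T^-(\phi_t(y)) = T^-(y) + t$, making the intersection defining $E^u(\phi_t(y))$ exactly the $d\phi_t$-image of the one defining $E^u(y)$.

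The main technical obstacle will be continuity of $y \mapsto E^u(y)$ at points where the exit time $T^-$ is discontinuous, notably on the inner boundary of $\mathcal V_\eps$ and at points whose past orbit stays trapped in $\mathcal V_\eps$ forever. I expect this to be handled by matching the finite-time intersections with the initial extension $\tilde E^u$ near the boundary, together with the uniform $C^0$-convergence of the graph transform on the trapped set (a standard consequence of exponential cone contraction and compactness); the resulting $E^u, E^s$ then satisfy both conclusions of the lemma.
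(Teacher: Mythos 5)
The paper does not actually prove this lemma: it cites it to Hirsch--Palis--Pugh--Shub (\cite{HPPS70}) and adds only a remark explaining why the rate $\lambda-\eps$ in item~\eqref{rate_close} can be achieved even though the reference does not state it explicitly (continuity of expansion/contraction rates in the base point, plus shrinking the neighborhood). So there is no argument in the paper for your proposal to match; you are in effect reconstructing the HPPS proof, and the cone-field/graph-transform strategy you sketch is indeed the right one.

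There is, however, a genuine gap in the definition you give for the extended bundle. When $T^-(y)<\infty$ --- i.e.\ when the backward orbit of $y$ leaves $\mathcal V_\eps$ in finite time, which happens on an open and typically large part of $\mathcal V_\eps$ --- the set
\[
\bigcap_{0\le s\le T^-(y)} d\phi_s\bigl(C^u_\delta(\phi_{-s}(y))\bigr)
\]
is (under your cone invariance) nothing but $d\phi_{T^-(y)}\bigl(C^u_\delta(\phi_{-T^-(y)}(y))\bigr)$, a thinned cone, \emph{not} a linear subspace. The intersection collapses to a $k$-plane only when $T^-(y)=+\infty$. Your unconditional claim ``which collapses to a subspace of dimension $\dim E^u$'' is therefore false on most of $\mathcal V_\eps$, and the subsequent local-invariance argument (which compares two such ``subspaces'' and needs equality of dimensions to upgrade the inclusion $E^u(\phi_t y)\subseteq d\phi_t E^u(y)$ to equality) does not get off the ground. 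The standard fix, which you gesture at in your last paragraph but do not build into the definition, is to seed the graph transform with the non-invariant reference plane $\tilde E^u$ at the entry point: for $T^-(y)<\infty$ put $E^u(y):=d\phi_{T^-(y)}\bigl(\tilde E^u(\phi_{-T^-(y)}(y))\bigr)$, and for $T^-(y)=\infty$ take the limit. Local invariance along trapped orbit segments is then genuinely immediate, and the remaining work is the continuity of $y\mapsto E^u(y)$ across the locus where $T^-$ jumps to $\infty$, which follows from the uniform exponential contraction of the graph transform exactly as you indicate. A minor secondary point: Gram--Schmidt cannot ``restore the correct rank'' of a partition-of-unity average of subspaces; one should either note that the averaged distributions stay close to the originals (hence full rank) in a small enough neighborhood, or extend directly as a continuous section of the Grassmannian bundle.
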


\begin{remark} 
The reference~\cite{HPPS70} does not contain an explicit statement about the hyperbolic rate being close to $\lambda$ (item~\eqref{rate_close} in Lemma~\ref{lem: loc_hyp}). However, this rate, indeed can be chosen as close to $\lambda$ as desired by choosing a sufficiently small neighborhood of $\Lambda$. This follows from the fact that the expansion and contraction rates depend continuously on the point. In the case when $M$ is 3-dimensional such extensions of bundles $E^s$ and $E^u$ can be chosen so that they integrate to locally invariant continuous foliations. In a higher dimension this seems to be unknown. However, for our purposes we will merely need locally invariant bundles which do not necessarily integrate to foliations.
\end{remark}

\subsection{The hyperbolic trapped set}\label{prel: hyp_trapped_sets}
Let $(\Sigma,g)$ be a smooth $n$-dimensional compact Riemannian manifold with boundary. Denote by $\pi: T^1{\Sigma}\to \Sigma$ the canonical projection and 
 $\left(T^1\Sigma\right)^\circ$ the interior of $T^1\Sigma$. Let $\partial_{-}$ and $\partial_{+}$ be the incoming and outgoing, respectively, subsets of the boundary of $T^1\Sigma$ defined by
$$\partial_{\pm} = \left\{v\in T^1\Sigma\,|\, \pi(v)\in\partial\Sigma, \, \pm g(v,\nu)>0\right\},$$
where $\nu$ is the unit normal vector field to $\partial\Sigma$ pointing outwards. For any $v\in \partial_-$, the geodesic $\gamma_v$ starting at $v$ either has an infinite length or exits $\Sigma$ at a boundary point with tangent vector
 in $\partial_+$. We denote by $l_g(v)\in [0,\infty]$ the length of $\gamma_{v}$ in $\Sigma$. Let 
$$\Gamma_{-}:=\{v\in \partial_{-}: l_g(v)=\infty\}.$$
For each $v\in\partial_-\setminus \Gamma_-$ denote the exit point by $s_g(v)\in\partial_+$. Similarly we define the set $\Gamma_+\subset\partial_+$ which is trapped in $\Sigma$ in backwards time. Then the {\it trapped set} in the interior of $\Sigma$ is defined via
$$
\Lambda=\Gamma_-\cap \Gamma_+.
$$
If $\Sigma$ has strictly convex boundary, $\Lambda$ is the set of $v\in (T^1\Sigma)^\circ$ such that the $\phi_t$-orbit of $v$ does not intersect the boundary. Throughout this paper we will always assume that the trapped set is hyperbolic. The stable and unstable bundles of $\Lambda$ are denoted $E^s$ and $E^u$, respectively. It is clear from 
the discussion in Section \ref{section: geometry tangent bundle} and \eqref{su_exp} that both $E^s$ and $E^u$ are perpendicular to the generating vector field~$X$. 

\begin{remark}\label{rem: U+-}
When $v\in\Lambda$, the isomorphism $d\pi\oplus K: TTM\to TM\oplus TM$ maps the invariant subbundles $E^{\sigma}(v)$ to the graph of stable/unstable Riccati
 tensors $U^{\sigma}_v$ on $v^{\bot} := \{w\in T_{\pi(v)}M| g_{\pi(v)}(w,v)=0\}$, $\sigma=s,u$. See \cite{E73} for more details.
\end{remark}

Now we apply Lemma~\ref{lem: loc_hyp} to the trapped set with $\eps=\frac{\lambda}{2}$. If $v\notin\Lambda$ then the invariant subbundles along the orbit through $v$ only exist for a finite time and, hence, 
they do not have to be perpendicular to $X$. Nevertheless, we can still 
obtain perpendicular invariant bundles by taking the orthogonal component (which only results in a slightly different constant $C$ in Lemma~\ref{lem: loc_hyp}). 

More specifically, we define the following linear subspaces of the space of Jacobi fields along a geodesic $\gamma_v$:
\begin{equation*} 
\mathcal J^{\sigma}(v) = \{J_\xi| \xi\in E^{\sigma}(v)\} \qquad\text{and} \qquad \mathcal J_{\bot}^{\sigma}(v) = \{J^{\bot}_\xi|\xi\in E^{\sigma}(v)\},
\end{equation*}
where $J_{\xi}=J^{\bot}_{\xi}+J^{\parallel}_{\xi}$ with $J^{\bot}_{\xi}$ being a perpendicular Jacobi vector field and $J^{\parallel}_{\xi}$ being a tangential Jacobi vector field, i.e., $J^{\parallel}_{\xi}(t)=(\alpha t+\beta)\gamma_v'(t)$ for some $\alpha,\beta\in\mathbb R$.  For any $\sigma\in\{s, u\}$, let $E^{\sigma}_{\bot}(v) := \{\xi\in T_vT^1M| J_{\xi}\in\mathcal J^{\sigma}_{\bot}(v)\}$.

Now we have the following variant of Lemma~\ref{lem: loc_hyp} near the hyperbolic trapped set $\Lambda$ of the geodesic flow.

\begin{lemma}\label{lem: perp_component}
There exists a neighborhood $\mathcal U$ of $\Lambda$ such that $\mathcal U\subset \mathcal V_{\frac{\lambda}{2}}$ and for $\sigma\in\{s, u\}$,
\begin{enumerate}
\item\label{cont_inv_bundle} $E^{\sigma}_{\bot}$ are continuous subbundles in $\mathcal U$;
\item $T_v(T^1\Sigma) = \mathbb RX(v)\oplus E^u_{\bot}(v)\oplus E^s_{\bot}(v)$ for all $v\in\mathcal U$;
\item For any $v\in\mathcal U$,  we denote by $(t_{-}(v), t_{+}(v))$ the maximal time interval on which $\phi_t(v)\in \mathcal U$. Then we have $d\phi_t(v)E_{\bot}^{\sigma}(v)=E^{\sigma}_{\bot}(\phi_t(v))$ for all $t\in(t_-(v),t_+(v))$;
\item\label{rate_for_perp} There exists $C'>0$ such that for all $v\in\mathcal U$, 
\begin{align*}
&|d\phi_t(v)\xi|\leq C'e^{-\frac{\lambda}{2}t}|\xi|, \qquad \forall t\in(0,t_+(v)), \forall \xi\in E^s_{\perp}(v)\qquad\text{and}\\
&|d\phi_t(y)\xi|\leq C'e^{\frac{\lambda}{2}t}|\xi|, \qquad \forall t\in(t_-(v),0), \forall \xi\in E^u_{\perp}(v);
\end{align*}
\item\label{linear_isom} Let $v\in \mathcal U$ and $v^{\bot} = \{w\in T_{\pi(v)}M| g_{\pi(v)}(w,v)=0\}$. For each $w\in v^{\bot}$, there exists a unique vector $\xi^{\sigma}_{w}\in E^{\sigma}_{\bot}(v)$ such that $d\pi\xi^{\sigma}_w=w$, and the map $X^{\sigma}_v: v^{\bot}\to E^{\sigma}_{\bot}(v), w\mapsto \xi^{\sigma}_w$ is a linear isomorphism.

\item\label{norm_bound_in_U} The map $U^{\sigma}_v:=K\circ X^{\sigma}_v$ is a linear endomorphism on $v^{\bot}$ and there exists $L>0$ depending only on $\Lambda$ such that $|U^{\sigma}_v|\leq L$ for all $v\in \mathcal U$.

\end{enumerate}
\end{lemma}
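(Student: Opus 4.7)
The plan is as follows. First I would unpack the definition of the perpendicular component concretely: for $\xi \in T_v(T^1\Sigma)$, since $K\xi \perp v$ automatically (because $v \in T^1\Sigma$), the parallel part of the Jacobi field is $J^{\parallel}_\xi(t) = g(d\pi\xi, v)\,\gamma_v'(t)$. Hence the ``perpendicular projection'' $P^\perp\colon \xi \mapsto \xi^\perp$, characterized by $d\pi(\xi^\perp) = d\pi\xi - g(d\pi\xi, v)v$ and $K(\xi^\perp) = K\xi$, is a smooth bundle endomorphism of $T(T^1\Sigma)$. Since the flow direction $X(v)$ is horizontal and projects to $v$, under the Sasaki metric $P^\perp$ is exactly the orthogonal projection onto $X(v)^\perp$, so $|P^\perp \eta| \leq |\eta|$. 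A direct comparison of Jacobi fields, using that the splitting $J = J^\perp + J^{\parallel}$ is intrinsic, shows that $P^\perp$ commutes with $d\phi_t$ wherever defined.

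Next I would exploit that on $\Lambda$ the invariant bundles $E^\sigma$ are already contained in $X^\perp$, so $P^\perp|_{E^\sigma(v)} = \mathrm{id}$ and $E^\sigma_\perp(v) = E^\sigma(v)$ on $\Lambda$. Combining this with the continuity of $E^\sigma$ on $\mathcal V_{\lambda/2}$ from Lemma~\ref{lem: loc_hyp}, the compactness of $\Lambda$, and the transversality of $\R X$, $E^u$, $E^s$ on $\Lambda$, I would shrink $\mathcal V_{\lambda/2}$ to a neighborhood $\mathcal U$ of $\Lambda$ on which the following open conditions simultaneously hold: (a) $P^\perp|_{E^\sigma(v)}$ is injective with a uniform lower bound $|P^\perp \eta| \geq c|\eta|$ for some $c > 0$; (b) $d\pi|_{E^\sigma_\perp(v)} \colon E^\sigma_\perp(v) \to v^\perp$ is injective; (c) $\R X(v) \oplus E^u_\perp(v) \oplus E^s_\perp(v) = T_v(T^1\Sigma)$; (d) the Riccati-type endomorphisms satisfy $|U^\sigma_v| \leq L$. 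Each condition is open and holds strictly on the compact set $\Lambda$ (for (d) this is the uniform bound on the Riccati tensors recorded in Remark~\ref{rem: U+-}), so all can be arranged at once by compactness.

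With such a $\mathcal U$ in hand, items (1), (2), (5), and (6) follow immediately: (1) because $E^\sigma_\perp = P^\perp(E^\sigma)$ and, by (a) and continuity of $P^\perp$, this is a continuous subbundle; (2) is precisely (c); (5) follows from (b) by a dimension count ($\dim E^\sigma_\perp = \dim v^\perp = n-1$); (6) follows from (d), noting that $K\xi \in v^\perp$ for every $\xi \in T_v(T^1\Sigma)$ so $U^\sigma_v$ indeed maps $v^\perp$ to $v^\perp$. The invariance (3) is immediate from $P^\perp \circ d\phi_t = d\phi_t \circ P^\perp$ combined with Lemma~\ref{lem: loc_hyp}(\ref{loc_inv}). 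Finally, for the rates (4): given $\xi \in E^s_\perp(v)$, write $\xi = P^\perp \eta$ with $\eta \in E^s(v)$ (on $\mathcal U$); then by orthogonality of $P^\perp$ in the Sasaki metric, by (a), and by Lemma~\ref{lem: loc_hyp}(\ref{rate_close}) applied with $\eps = \lambda/2$, we obtain $|d\phi_t \xi| = |P^\perp d\phi_t \eta| \leq |d\phi_t \eta| \leq Ce^{-\lambda t/2}|\eta| \leq (C/c)e^{-\lambda t/2}|\xi|$, and the unstable estimate is entirely symmetric. The main (mild) obstacle is the bookkeeping required to verify that all the continuity-based smallness conditions on $\mathcal U$ can be met simultaneously, but this is routine since each is open and $\Lambda$ is compact.
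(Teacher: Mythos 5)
Your argument is correct and follows the paper's own strategy: it reduces to the locally invariant hyperbolic splitting of Lemma~\ref{lem: loc_hyp}, uses that $E^{\sigma}_{\perp}=E^{\sigma}$ on $\Lambda$, and shrinks to a neighborhood $\mathcal U$ on which the relevant open conditions persist by compactness of $\Lambda$. Making the Sasaki-orthogonal projection $P^{\perp}$ explicit (and verifying it commutes with $d\phi_t$), and replacing the paper's citation to \cite[Proposition~2.6]{E73} for item~\eqref{linear_isom} with an injectivity-plus-dimension count, are small streamlinings rather than a genuinely different route.
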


\begin{proof}
By Lemma~\ref{lem: loc_hyp}, $E^{\sigma}$ are continuous and invariant under the flow $\{\phi_t\}$ in $\mathcal V_{\frac{\lambda}{2}}$, so we obtain the first three items of the lemma because the splitting into perpendicular and tangential Jacobi vector fields is invariant under the flow.

Since $E^{\sigma}_{\bot}(v)=E^{\sigma}(v)$ for all $v\in\Lambda$, there exists a neighborhood $\mathcal U$ of $\Lambda$ such that for any $v\in\mathcal U$ for any $\xi\in E^u(v)\cup E^s(v)\setminus\{0\}$ we have $\frac{|<\xi,X>}{\|\xi\|}|\leq \frac{1}{10}$. Thus, using Lemma~\ref{lem: loc_hyp} \eqref{rate_close}, we obtain \eqref{rate_for_perp} with $C'=2C$. By choosing $\mathcal U$ sufficiently small and using~\cite[Proposition 2.6]{E73}, we obtain \eqref{linear_isom}.
Finally, \eqref{norm_bound_in_U} follows from Remark \ref{rem: U+-} and \eqref{cont_inv_bundle}.
\end{proof}

\subsection{Comparison lemmas of Jacobi fields}
Let $J$ be a nonzero Jacobi field along a unit speed geodesic $\gamma$. For any $t$ with $J(t)\neq 0$, define 
\begin{equation}\label{def: mu}
\mu_J(t):= \frac{1}{2}\frac{\left(\|J\|^2(t)\right)'}{\|J\|^2(t)}=\frac{g_{\gamma(t)}(J'(t), J(t))}{g_{\gamma(t)}(J(t), J(t))}.
\end{equation}
Notice that $\mu_J$ is invariant under scaling of the Jacobi field $J$. 

We will use the following comparison lemma from \cite{G75} many times in this paper.
\begin{lemma}[\hspace{1sp}\cite{G75}, Lemma 3]\label{lem: comp_thm}
Let $\gamma$ be a geodesic on a Riemannian manifold $M$ and let $J$ be a  perpendicular Jacobi field along $\gamma$. Assume that $f\colon\mathbb{R}\to\mathbb{R}$ is integrable on bounded sets and gives an upper bound on sectional curvature as follows
$$K(\textup{span}\{\gamma'(t), J(t)\})\leq f(t)$$ for all $t$. Let $s^*\in\R$ and let $u$ be a solution of $u''+fu=0$ with $u(s^*)=\|J\|(s^*), u'(s^*)\leq \|J\|'(s^*)$. Assume that $u(t)>0$ for $s^*<t\leq s^{**}$. Then for any $s^*<t\leq s^{**}$, $J(t)\neq 0$, and 
$$\mu_J(t)\geq u'(t)/u(t).$$
\end{lemma}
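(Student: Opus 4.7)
The natural plan is to reduce to a scalar Sturm--Liouville comparison for $v(t) := \|J(t)\|$, turning the curvature bound into a second-order differential inequality. Starting from $v^2 = \langle J, J\rangle$ and differentiating twice, the Jacobi equation together with $J \perp \gamma'$ gives, wherever $v > 0$,
\begin{equation*}
v''(t) \;=\; -K\bigl(\mathrm{span}\{\gamma'(t), J(t)\}\bigr)\,v(t) \;+\; \frac{\|J'(t)\|^2\|J(t)\|^2 - \langle J'(t), J(t)\rangle^2}{\|J(t)\|^3}.
\end{equation*}
The second term is nonnegative by Cauchy--Schwarz, and the first is bounded below by $-f v$ by hypothesis, so $v'' + f v \geq 0$ on $\{v > 0\}$.

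Next I would introduce the Wronskian $w(t) := v(t)u'(t) - v'(t)u(t)$. On any interval where both $v, u > 0$, $w' = v u'' - v'' u = -fvu - v''u \leq 0$ since $v''u \geq -fvu$. The initial conditions give $w(s^*) = u(s^*)\bigl(u'(s^*) - v'(s^*)\bigr) \leq 0$, so $w \leq 0$ throughout, which rearranges to the desired bound $\mu_J = v'/v \geq u'/u$.

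The remaining point is that $J$ does not vanish on $(s^*, s^{**}]$, so that $\mu_J$ is defined there. I would argue by contradiction on the maximal right-open subinterval $I$ of $(s^*, s^{**}]$ on which $v > 0$: there $(v/u)' = -w/u^2 \geq 0$, so $v/u$ is nondecreasing and hence bounded below by $v(s^*)/u(s^*) = 1$. If $v$ had a first zero $t_0 \in (s^*, s^{**}]$, continuity of $v/u$ at $t_0$ would force $v(t_0) \geq u(t_0) > 0$, a contradiction.

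The main technical point is the interlocking of these two facts: the convexity inequality $v'' + fv \geq 0$ only makes classical sense where $v > 0$, while ruling out zeros of $v$ relies on the Wronskian bound, which in turn depends on the convexity inequality. The bootstrap above handles this cleanly; the sole edge case is $u(s^*) = 0$, where uniqueness for the ODE $u''+fu=0$ forces $u'(s^*) > 0$ and hence $v'(s^*) > 0$, so $v$ becomes positive in a right neighborhood of $s^*$ and the remainder of the argument applies.
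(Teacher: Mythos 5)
The paper does not prove this lemma; it is cited from Gulliver \cite{G75}. Your argument is the standard Sturm--Wronskian comparison proof of this fact (equivalently, a Riccati comparison), which is what Gulliver's Lemma 3 amounts to: the key differential inequality $v''+fv\ge 0$ on $\{v>0\}$ follows from the Jacobi equation, Cauchy--Schwarz, and the curvature bound; the Wronskian $w = vu'-v'u$ is then non-increasing with $w(s^*)\le 0$, giving $(v/u)'\ge 0$; and the bootstrap, together with the $u(s^*)=0$ edge case handled via the right-hand derivative of $\|J\|$, rules out zeros of $J$ on $(s^*,s^{**}]$. The proof is correct and I have no substantive objections.
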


\begin{remark}\label{rem: behavior of Riccati solution}
    Let $u$ be a solution of $u''+fu=0$ where $f\colon \mathbb R\rightarrow\mathbb R$ is integrable on bounded sets. We define the logarithmic derivative of $u$ by $w=\frac{u'}{u}$. In particular, $u(t)=u(0)\exp{\int_0^t}w(s)ds$ and $w$ satisfies a first order non-linear equation 
    $$w'=-f-w^2.$$
    This equation shows that if $f\leq 0$ then the graph of $w$ crosses the graphs of $\sqrt{-f}$ and $-\sqrt{-f}$ horizontally, $w$ monotonically increases between them and decreases while above $\sqrt{-f}$ and below $-\sqrt{-f}$. 
    
    Thus, we get a good control on $w$ from below (know that it does not drop to $-\infty$ in the considered time) only if $w(0)$ is above $-\sqrt{-f}$. In particular, by Lemma~\ref{lem: comp_thm}, in that case we get a control on $\mu_J$.

    If $f>0$, then $w$ is monotonically decreasing so there is no good control from below.
\end{remark}

\begin{corollary}\label{cor: bdd_time_bdd_mu}
Let $(M, g)$ be a compact Riemannian manifold without conjugate points. For any $\tau>0$, there exists a constant $Q=Q(\tau, g)$ such that for any $v\in TM$ with $\gamma_v[0,\tau]\subseteq M$ the following holds. Let $J$ be a perpendicular Jacobi field along $\gamma_v$. If $\mu_J(0)>Q$ (we allow $\mu_J(0)=+\infty$), then  $\mu_J(t)>-Q$ for all $t\in[0,\tau]$. In particular, $J$ does not vanish on $(0,\tau]$.
\end{corollary}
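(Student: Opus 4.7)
\bigskip

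\noindent\textbf{Proof plan.}
The plan is to argue by contradiction using compactness of $T^1M$, smooth dependence of Jacobi fields on initial data, and the no-conjugate-points hypothesis; the scaling invariance of $\mu_J$ allows a convenient normalization.

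Suppose the statement fails for some $\tau>0$. Then for each integer $n\ge 1$ there exist $v_n\in T^1M$ with $\gamma_{v_n}[0,\tau]\subseteq M$ and a perpendicular Jacobi field $J_n$ along $\gamma_{v_n}$ with $\mu_{J_n}(0)>n$ such that either $J_n(t_n)=0$ or $\mu_{J_n}(t_n)\le -n$ for some $t_n\in(0,\tau]$. Via Fact~\ref{identification_Jacobi}, identify $J_n$ with a vector $\xi_n\in T_{v_n}T^1M$; since $\mu_{J_n}$ is scaling invariant, rescale so that $|\xi_n|=1$ in the Sasaki metric, i.e.\ $\|J_n(0)\|^2+\|J_n'(0)\|^2=1$. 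Cauchy--Schwarz gives $\mu_{J_n}(0)\le\|J_n(0)\|^{-1}$, so $\|J_n(0)\|\to 0$ and $\|J_n'(0)\|\to 1$. By compactness of $T^1M$, pass to a subsequence with $(v_n,\xi_n)\to(v_\infty,\xi_\infty)$ and $t_n\to t_\infty\in[0,\tau]$. Smooth dependence of the Jacobi equation on initial data yields $J_n\to J_\infty:=J_{\xi_\infty}$ in $C^\infty$ on $[0,\tau]$, where $J_\infty$ is a nonzero perpendicular Jacobi field along $\gamma_{v_\infty}$ with $J_\infty(0)=0$ and $\|J_\infty'(0)\|=1$.

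The no-conjugate-points hypothesis forces $J_\infty(t)\ne 0$ on $(0,\tau]$. Combined with the Taylor expansion $\|J_\infty(t)\|^2=t^2+O(t^3)$, this yields some $\delta\in(0,\tau)$ with $\|J_\infty(t)\|\ge t/2$ on $(0,\delta]$ and a lower bound $\|J_\infty\|\ge m>0$ on $[\delta,\tau]$. To get a uniform positive lower bound on $\mu_{J_n}$ near $t=0$, I expand $\|J_n(t)\|^2$ and $g(J_n'(t),J_n(t))$ at $t=0$ using $\|J_n(0)\|\to 0$, $\|J_n'(0)\|\to 1$, the Jacobi equation, and the sign $\mu_{J_n}(0)>0$; this yields constants $c_1,c_2>0$ and (possibly smaller) $\delta>0$, independent of $n$, with
\[
\|J_n(t)\|^2\ge c_1 t^2,\qquad g(J_n'(t),J_n(t))\ge c_2 t\qquad\text{for all }t\in(0,\delta]
\]
and all $n$ sufficiently large. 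In particular $J_n(t)\ne 0$ and $\mu_{J_n}(t)\ge c_2/(c_1 t)>0$ on $(0,\delta]$.

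Finally, the $C^\infty$ convergence $J_n\to J_\infty$ on $[\delta,\tau]$ together with $\|J_\infty\|\ge m$ gives $\|J_n\|\ge m/2$ on $[\delta,\tau]$ for large $n$, so $\mu_{J_n}$ is uniformly bounded there. Combining the two regimes, $\mu_{J_n}(t)$ is uniformly bounded below on $[0,\tau]$ for $n$ large and $J_n$ has no zero on $(0,\tau]$, contradicting the assumed bad behavior at $t_n$. The main obstacle is the regime $t_n\to 0$, in which the limit field $J_\infty$ also vanishes at $0$ and compactness alone yields no contradiction; it is resolved by the explicit Taylor expansion above, which crucially uses the positivity of $\mu_{J_n}(0)$ together with the normalization $\|J_n'(0)\|\to 1$.
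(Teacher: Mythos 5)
Your argument is correct, but it takes a genuinely different route from the paper's. The paper handles the problematic case $t_n\to 0$ by invoking Gulliver's comparison lemma (Lemma~\ref{lem: comp_thm}): comparing against solutions of $u''+\tilde K^2 u=0$ shows that if $\mu_{J_n}(0)>n$ then $\mu_{J_n}$ cannot drop below $-n$ before a time $\tfrac{2}{\tilde K}\tan^{-1}\tfrac{1}{\tilde K}$ that is bounded away from zero \emph{uniformly in $n$}, after which a single compactness argument suffices: the limit Jacobi field vanishes at $0$ and at a strictly positive time $s$, contradicting absence of conjugate points. You instead replace the comparison-lemma step by a direct Taylor expansion of $\|J_n(t)\|^2$ at $t=0$, observing that the sign condition $g(J_n'(0),J_n(0))>0$ coming from $\mu_{J_n}(0)>0$, together with $\|J_n'(0)\|\to 1$ and uniform bounds on the error terms (which follow from compactness of $M$ and the Jacobi equation), forces $\mu_{J_n}$ to stay positive on a fixed interval $(0,\delta]$; you then invoke compactness and no conjugate points only on $[\delta,\tau]$. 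Both arguments are sound. The paper's route is shorter and reuses a comparison lemma it already needs elsewhere; yours is more self-contained and makes the role of the sign of $\mu_J(0)$ near $t=0$ very explicit, at the cost of having to carry through uniform control of the $O(t^3)$ remainder. If you wish to tighten your writeup, be explicit that the error bounds in the Taylor expansion are uniform in $n$ because $|\xi_n|=1$, $\gamma_{v_n}[0,\tau]\subset M$, and $M$ is compact so the curvature tensor and its covariant derivative are bounded; as written the claim ``this yields constants $c_1,c_2>0$ and $\delta>0$, independent of $n$'' relies on this but does not say it.
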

\begin{proof}
Because $M$ is compact it admits an upper bound on sectional curvature $\tilde{K}^2$ and we can assume that $\tilde K>1$.

We argue by contradiction. Assume that there exists $\tau_0>0$ such that for any $n\in\mathbb N$, there exists $v_n\in TM$ with $\gamma_{v_n}[0,\tau_0]\subseteq M$ and perpendicular Jacobi field $J_n$ along $\gamma_{v_n}$ with $\mu_{J_n}(0)>n$ and $\mu_{J_n}(s_n)<-n$  for some $s_n\in[0,\tau_0]$. First, we prove that $s_n\geq \frac{2}{\tilde K}\tan^{-1}\frac{1}{\tilde K}$. If $J_n(0)\neq 0$, by applying Lemma \ref{lem: comp_thm} with $f\equiv \tilde{K}^2$, $s^*=0$, $u(0)=\|J_n\|(0)$, $u'(0)=n\|J_n\|(0)$, we have 
$$\mu_{J_n}(t)\geq \frac{u'(t)}{u(t)}=-\tilde{K}\tan\left(\tilde{K}t-\tan^{-1}\frac{n}{\tilde{K}}\right), \,\,\,\,\, t\in [0,\pi/2\tilde{K}].$$
Thus 
$$s_n\geq \frac{2}{\tilde K}\tan^{-1}\frac{n}{\tilde K}\geq \frac{2}{\tilde K}\tan^{-1}\frac{1}{\tilde K}$$
 If $J_n(0)= 0$, we may assume $\|J_n\|'(0)=1$, the solution to $u''+\tilde{K}^2u=0, u(0)=0, u'(0)=1$ is $u=\frac{1}{\tilde K}\sin(\tilde{K}t)$, thus 
$$\mu_{J_n}(t)\geq \frac{u'(t)}{u(t)}=\tilde K \cot(\tilde{K}t), \,\,\,\,\, t\in [0,\pi/\tilde{K}].$$
Hence $s_n\geq \frac{\pi}{2\tilde{K}}>\frac{2}{\tilde K}\tan^{-1}\frac{1}{\tilde K}$ since $\tilde{K}>1$. Thus, in either cases we have $s_n\geq \frac{2}{\tilde K}\tan^{-1}\frac{1}{\tilde K}$. In particular, 
$$\tau_0\geq \frac{2}{\tilde K}\tan^{-1}\frac{1}{\tilde K}$$

Without loss of generality, we assume that $\|J_n\|'(0)=1$ for all $n\in\mathbb N$. Thus,
\begin{equation*} 
\left(\|J_n\|^2\right)'(0)=2g_{\gamma_v(0)}(J'_n(0), J_n(0))\leq 2\| J'_n\|(0)\|J_n\|(0)= 2\|J_n\|(0).
\end{equation*}
Since $\mu_{J_n}(0)>n$, $\|J_n\|(0)< n^{-1}$. By taking a subsequence if necessary, we may assume that $v_n\to v, J'_n(0)\to w$ and  $s_n\to s\geq \frac{2}{\tilde K}\tan^{-1}\frac{1}{\tilde K}$ as $n\rightarrow\infty$ for some $v,w\in T^1M$ and $s\in[0,\tau]$. Let $J$ be a Jacobi field along $\gamma_v$ with $J(0)=0, J'(0)=w$. Then $J_n\to J$ as $n\to\infty$. On the other hand we have $\mu_J(0)=+\infty$ and $\mu_J(s)=-\infty$ thus $J(s)=0$. This contradicts to the fact that $M$ has no conjugate points.
\end{proof}

\subsection{The second fundamental form and the shape operator}\label{section: SFF}

In this section, we recall the definitions of the second fundamental form and the shape operator and their connection to sectional curvatures. See \cite{G94} for more details.

Let $S$ be an $(n-1)$-dimensional smooth manifold. Consider the product $(a,b)\times S$ with a Riemannian metric
\begin{equation*}
ds^2=dt^2+g_t,
\end{equation*}
where $t\in(a,b)$ and $g_t$ is a Riemannian metric on $S_t:=\{t\}\times S$. In particular, for any $\theta\in S$, we have that $\gamma(t)=(t,\theta)$, where $t\in(a,b)$, is a geodesic on $(a,b)\times S$. 

Define $\pi_s: \mathbb R\times S\to \mathbb R\times S$ by $\pi_s(t,\theta)=(t+s,\theta)$ for $\theta\in S$. The \textit{second fundamental form} on $S_t$ is a quadratic form given by:
\begin{equation}\label{def: second_fundamental_form}
\II_{S_t}(X, Y):=\frac{1}{2}\frac{d}{ds}\Big|_{s=0}g_{t+s}(d\pi_sX,d\pi_sY), \qquad\forall X, Y\in T_{(t,\theta)}S_t, \quad \forall t\in(a,b).
\end{equation}


The \textit{shape operator} $A(t, \theta): T_{(t,\theta)}S_t\to T_{(t,\theta)}S_t$ is the self-adjoint operator associated to $\II_{S_t}$ via
\begin{equation*}
\II_{S_t}(X, Y)=g_t(A(t, \theta)X, Y), \qquad\forall X, Y\in T_{(t,\theta)}S_t.
\end{equation*}


In particular, $A(t,\theta)$ is diagonalizable and its eigenvalues $\lambda_1(t,\theta)\leq \cdots\leq \lambda_{n-1}(t,\theta)$ are called the {\it principal curvatures} at $(t,\theta)$. The eigenvectors of $A(t,\theta)$ are called the  {\it principal directions} at $(t,\theta)$. Define $A(t)\colon S\rightarrow End(T_{(t,\cdot)}S_t)$ via $A(t)\theta:=A(t,\theta)$ and $\lambda_i(t)\colon S\to\mathbb{R}$ by $\lambda_i(t)\theta:=\lambda_i(t,\theta)$.  We say that $S_t$ is \textit{strictly convex} if $\lambda_1(t)>0$. Let $\lambda_{\max}(S_t) = \max\{\lambda_{n-1}(t,\theta)| \theta\in S_t\}$ and $\lambda_{\min}(S_t) = \min\{\lambda_1(t,\theta)|\theta\in S_t\}$.

For any vectors $X,Y\in T^1\left((a,b)\times S\right)$ such that $X$ and $Y$ are orthogonal, the sectional curvature of $\sigma_{X,Y}=\text{span}\{X, Y\}$ is defined by
\begin{equation}\label{sectional curvature}
K(\sigma_{X,Y})=\langle R(X,Y)Y,X\rangle,
\end{equation}
where $\langle\cdot,\cdot\rangle$ is the inner product corresponding to $ds^2$ and $R$ is the Riemann curvature tensor. In particular, 
\begin{equation*}
R(X,Y)Z = \nabla_X\nabla_Y Z-\nabla_Y\nabla_X Z-\nabla_{[X,Y]}Z, \quad\text{for any } X,Y, Z\in T\left((a,b)\times S\right),
\end{equation*}
where $[\cdot,\cdot]$ is the Lie bracket of vector fields.

Let $T=\partial/\partial t$. For any vector $X\in T^1S_t$, the sectional curvature of $\sigma_{X,T}=\text{span}\{X, T\}$ is given by
\begin{equation}\label{def: normal sectional}
K(\sigma_{X,T})=g_t(R(t)X, X),
\end{equation}
where $R(t):=-A(t)'-A(t)^2$ and $A(t)'(X):=\frac{d}{ds}\Big|_{s=0}\left(A(t+s)\theta\right)(d\pi_s X)$ for all $X\in T_{(t,\theta)}S_t$.

For any 2-plane $\sigma_{X,Y}=\text{span}\{X, Y\}\subseteq TS_t$ where $X, Y\in TS_t$, let $K^{\text{int}}(\sigma_{X,Y})$ be the intrinsic sectional curvature of $g_t$ at $\sigma_{X,Y}$. Then, the relation between $K^{\text{int}}(\sigma_{X, Y})$ and $K(\sigma_{X, Y})$ is given by Gauss' equation:
\begin{equation}\label{Gauss equation}
K(\sigma_{X, Y})=K^{\text{int}}(\sigma_{X, Y})-\frac{\II_{S_t}(X, X)\II_{S_t}(Y, Y)-\II_{S_t}(X, Y)^2}{|X\wedge Y|_t},
\end{equation}
where 
\begin{equation}\label{XwedgeY}
|X\wedge Y|_t=g_t(X, X)g_t(Y, Y)-g_t(X, Y)^2. 
\end{equation}
We have the following estimate on $K(\sigma)$ where 
$\sigma$ is a $2$-plane in $TS_t$.

\begin{lemma}\label{lem: bound_level_curv}
Assume $S_t$ is strictly convex. Then, for any 2-plane $\sigma\subseteq T_{(t,\theta)}S_t$,
$$K(\sigma)\leq K^{\text{int}}(\sigma)-\lambda_{\min}(S_t)^2.$$
\end{lemma}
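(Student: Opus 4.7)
The inequality is an immediate consequence of Gauss' equation \eqref{Gauss equation} once we obtain the estimate
$$\frac{\II_{S_t}(X,X)\II_{S_t}(Y,Y)-\II_{S_t}(X,Y)^2}{|X\wedge Y|_t}\;\geq\;\lambda_{\min}(S_t)^2$$
for any basis $\{X,Y\}$ of $\sigma$. My plan is to prove this inequality and then substitute into \eqref{Gauss equation}.

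First I would reduce to the case of an orthonormal basis. Observe that the quotient in the displayed formula above is invariant under changes of basis of $\sigma$, because the numerator scales as $\det^2$ of the change-of-basis matrix and so does $|X\wedge Y|_t$ by \eqref{XwedgeY}. Therefore we may pick $\{X,Y\}$ that is $g_t$-orthonormal, which makes $|X\wedge Y|_t=1$ and reduces the problem to showing
$$\II_{S_t}(X,X)\II_{S_t}(Y,Y)-\II_{S_t}(X,Y)^2\;\geq\;\lambda_{\min}(S_t)^2.$$

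Next I would interpret the left-hand side as a determinant. Since $\II_{S_t}$ is a symmetric bilinear form (the shape operator $A(t,\theta)$ is self-adjoint), the $2\times 2$ matrix $B$ with entries $B_{ij}=\II_{S_t}(X_i,X_j)$, $X_1=X$, $X_2=Y$, represents the restriction of $\II_{S_t}$ to $\sigma$ in the orthonormal basis $\{X,Y\}$, and its determinant is precisely the expression above. Because $B$ is symmetric it has two real eigenvalues $\alpha\leq\beta$ given by the min–max characterization
$$\alpha=\min_{v\in\sigma,\,|v|_t=1}\II_{S_t}(v,v),\qquad \beta=\max_{v\in\sigma,\,|v|_t=1}\II_{S_t}(v,v).$$
Strict convexity of $S_t$ gives $\II_{S_t}(v,v)=g_t(A(t,\theta)v,v)\geq\lambda_{\min}(S_t)\,|v|_t^2$ for every $v\in T_{(t,\theta)}S_t$, so $\alpha\geq\lambda_{\min}(S_t)$ and hence $\beta\geq\alpha\geq\lambda_{\min}(S_t)>0$. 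Therefore
$$\det B=\alpha\beta\geq\lambda_{\min}(S_t)^2,$$
which is the claimed bound. Substituting into \eqref{Gauss equation} gives $K(\sigma)\leq K^{\mathrm{int}}(\sigma)-\lambda_{\min}(S_t)^2$.

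There is no real obstacle here; the only subtlety worth being careful about is that $\sigma$ need not be an invariant subspace of $A(t,\theta)$, so the eigenvalues $\alpha,\beta$ of $B$ are not eigenvalues of $A(t,\theta)$ itself. This is handled correctly by the min–max characterization applied to the restricted form $\II_{S_t}|_{\sigma\times\sigma}$, together with the pointwise lower bound $\II_{S_t}(v,v)\geq\lambda_{\min}(S_t)|v|_t^2$ coming from strict convexity.
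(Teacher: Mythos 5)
Your proof is correct. It takes a slightly different route from the paper's: you both reduce to showing that $\det\bigl(\II_{S_t}\bigr|_\sigma\bigr)\ge\lambda_{\min}(S_t)^2$ in a $g_t$-orthonormal basis of $\sigma$, but the paper establishes this by expanding $X,Y$ in a basis of principal directions and invoking the Lagrange-type identity $\sum_{i<j}(X_iY_j-X_jY_i)^2=|X\wedge Y|_t=1$ to write the determinant as $\sum_{i<j}(X_iY_j-X_jY_i)^2\lambda_i\lambda_j$, whereas you observe directly that the restriction of a positive definite symmetric form to a subspace has all eigenvalues at least as large as the global minimum eigenvalue, so $\det B=\alpha\beta\ge\lambda_{\min}(S_t)^2$. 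Your argument is more conceptual and avoids the coordinate computation; the paper's is more explicit and extracts the exact value of the determinant as a $\lambda_i\lambda_j$-weighted sum, which could be useful if one wanted a sharper or more refined bound. Both are sound, and the subtlety you flag at the end --- that $\sigma$ need not be $A(t,\theta)$-invariant, so $\alpha,\beta$ are not themselves principal curvatures --- is exactly why the min--max argument is needed; you handle it correctly.
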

\begin{proof}
Let $\{\tilde{e}_i\}_{i=1}^{n-1}$ be an orthonormal basis of $T_{(t,\theta)}S_t$ consisting of principal directions. Then, we have 
$$\II_{S_t}(\tilde{e}_i, \tilde{e}_j)=g_t(A(t,\theta)\tilde{e}_i, \tilde{e}_j)=\delta_{ij}\lambda_i(t,\theta),$$
where $\delta_{ij}$ is the Kronecker delta function. Let $X=\sum_{i=1}^{n-1} X_i\tilde{e}_i$ and $Y=\sum_{i=1}^{n-1} Y_i\tilde{e}_i$ be an orthonormal basis of $\sigma$. Then 
$$1=|X\wedge Y|_{t}=\sum_{i,j=1}^{n-1}X_i^2 Y_j^2-X_i Y_i X_j Y_j=\sum_{i<j}(X_i Y_j-X_jY_i)^2.$$
Thus, we have
\begin{eqnarray*}
& &\II_{S_t}(X, X)\II_{S_t}(Y, Y)-\II_{S_t}(X, Y)^2=\sum_{i,j=1}^{n-1}(X_i^2 Y_j^2-X_i Y_i X_j Y_j)\lambda_i(t,\theta)\lambda_j(t,\theta)\\
&=& \sum_{i<j}(X_i Y_j-X_jY_i)^2\lambda_i(t,\theta)\lambda_j(t,\theta) \geq  \lambda_{\min}(S_t)^2\sum_{i<j} (X_i Y_j-X_jY_i)^2=\lambda_{\min}(S_t)^2.
\end{eqnarray*}
By \eqref{Gauss equation}, we obtain $K(\sigma)\leq K^{\text{int}}(\sigma)-\lambda_{\min}(S_t)^2.$
\end{proof}

\section{Proofs of applications}\label{sec: proof_corollaries}

In this section we give proofs of Corollaries~\ref{cor_conf_class} and~\ref{thm_Livshits}.

\begin{proof}[Proof of Corollary~\ref{cor_conf_class}]
Denote by $\mu$ the normalized Riemannian volume on $\Sigma$ with respect to $g$. We can assume that $\int_\Sigma\rho^2d\mu\le 1$. (Otherwise we can exchange the roles of $g$ and $\rho^2g$ so that the conformal factor becomes $1/\rho^2$ and proceed in exactly same way.) 

We begin by applying Theorem \ref{thm_anosov_intro} and extend $(\Sigma, g)$ to a closed Anosov manifold $(\Sigma^{ext}, g^{ext})$. We also extend $\rho$ to $\rho^{ext}\colon \Sigma^{ext}\to\R$ by 1. Denote by $\mu^{ext}$ the normalized Riemannian volume on $(\Sigma^{ext}, g^{ext})$.

Assume $\rho^{ext}$ is not 1 everywhere on $\Sigma^{ext}$. Then by Cauchy-Schwartz inequality we have
$$
\int_{\Sigma^{ext}}\rho^{ext}d\mu^{ext}<1.
$$
Now following~\cite[Theorem 2]{K88}, we apply Birkhoff ergodic theorem and Anosov closing lemma to produce a unit speed geodesic $\gamma$ which approximates volume measure sufficiently well so that
$$
\int_{\gamma}\rho^{ext}(\gamma(t))dt<length(\gamma, g).
$$
Let $c$ be a connected component of $\gamma\cap\Sigma$. Denote by $c'$ the geodesic segment for $\rho^2g$ with the same entry and exit point as $c$. The universal cover $\tilde\Sigma$ equipped with the lift of $\rho^2g$ does not have conjugate points. Hence the segment $c'$ is the global minimizer. Thus
$$
\int_{c}\rho^{ext}(c(t))dt=length(c, \rho^2g)\ge length(c', \rho^2g)=length(c, g)
$$
where the last equality is due to the lens data assumption. By applying this inequality to each connected component of $\gamma\cap\Sigma$ and noting
 that $\rho^{ext}=1$ outside $\Sigma$ we obtain
$$
\int_{\gamma}\rho^{ext}(\gamma(t))dt\ge length(\gamma, g),
$$
 which gives a contradiction. Hence $\rho^{ext}=1$.
\end{proof}

\begin{remark} Using a local argument it is not hard to show that $\rho|_{\partial\Sigma}=1$. However, note that in the above proof we do not need to consider an extension of $g'$ and, in principle, $\rho$ is allowed to be discontinuous on the boundary of $\Sigma$.
\end{remark} 

\begin{proof}[Proof of Corollary~\ref{thm_Livshits}]
We begin by applying our main result to extend $X$ to an Anosov vector field, which we continue denote by $X$ on $\Sigma^{ext}\supset\Sigma$. Then we extend $\beta$ by the zero function. Because $C^r$-jet of $\beta$ vanishes on the boundary this extension remains $C^r$. 

For any periodic geodesic $\gamma$ which intersects boundary of $\Sigma$, we have 
$$
\int_\gamma \beta dt=0
$$
from the assumption of the corollary. Further we also have the following
\begin{lemma}
\label{lemma_aprx}
If $\gamma$ is a periodic geodesic in the interior of $\Sigma$ then
$$
\int_\gamma \beta dt=0.
$$
\end{lemma}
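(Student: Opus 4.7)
The plan is to use the Anosov property of the extended manifold $\Sigma^{ext}$ to approximate the trapped periodic orbit $\gamma$ by periodic orbits of the extended Anosov flow that cross $\partial\Sigma$, for which the hypothesis of the corollary (together with the vanishing extension of $\beta$) already forces the integral to vanish. A limiting argument will then transfer the cocycle condition to $\gamma$ itself.

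To build the approximants I would invoke Bowen's specification property for the transitive Anosov geodesic flow on $T^1\Sigma^{ext}$. Let $T$ denote the prime period of $\gamma$. Fix a point $u_0\in T^1\Sigma^{ext}\setminus T^1\Sigma$ and a small $\tau>0$ such that the orbit $\phi_t(u_0)$ stays in the exterior for all $t\in[0,\tau]$. Given $\eps>0$ and the associated Bowen gap $T^*(\eps)$, apply specification to the two orbit segments $(\gamma,[0,kT])$ and $(u_0,[0,\tau])$ to produce a periodic orbit $\tilde\gamma_k$ of the Anosov flow, of period $P_k\approx kT+\tau+2T^*$, that $\eps$-shadows each of them. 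For $\eps$ smaller than the distance from $\{\phi_t(u_0):t\in[0,\tau]\}$ to $\partial\Sigma$, the part of $\tilde\gamma_k$ shadowing $u_0$ remains in the exterior, so $\tilde\gamma_k$ genuinely crosses $\partial\Sigma$.

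Decompose $\tilde\gamma_k$ into its maximal subsegments inside $\Sigma$ (each necessarily of the form $\gamma_v|_{[0,l_g(v)]}$ for some $v\in\partial_-\setminus\Gamma_-$, by strict convexity of the boundary and periodicity) and its arcs in $\Sigma^{ext}\setminus\Sigma$. The hypothesis of Corollary~\ref{thm_Livshits} kills the integral of $\beta$ over each interior subsegment, while $\beta^{ext}\equiv 0$ kills it on the exterior arcs; hence $\int_{\tilde\gamma_k}\beta^{ext}\,dt=0$. On the other hand, using the $C^r$ regularity of $\beta$, which provides a modulus of continuity $\omega_\beta$, shadowing of $\gamma$ over time $kT$ gives
$$\int_{\tilde\gamma_k}\beta^{ext}\,dt \;=\; k\int_\gamma\beta\,dt \;+\; O\bigl(kT\cdot\omega_\beta(\eps)\bigr) \;+\; O\bigl((\tau+2T^*)\|\beta\|_{C^0}\bigr),$$
where the last remainder absorbs the exterior segment and the two transition gaps. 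Dividing by $k$ and sending $k\to\infty$ yields $\bigl|\int_\gamma\beta\,dt\bigr|\le T\,\omega_\beta(\eps)$, and then $\eps\to 0$ gives the claim.

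The main obstacle is justifying the specification input: one must know that the Anosov geodesic flow on the closed manifold $\Sigma^{ext}$ is topologically mixing, which is standard because it preserves the Liouville measure and is Anosov, so specification of Bowen applies. The rest of the argument is robust: the constant $T^*(\eps)$ appears only inside the $k$-independent remainder, so it does not interfere with sending $k\to\infty$ before $\eps\to 0$.
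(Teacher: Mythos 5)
Your proposal is correct, but it takes a genuinely different route than the paper. The paper first proves that the trapped set $\Lambda$ is nowhere dense (using local product structure plus ergodicity of the restricted Liouville measure), so that boundary-crossing periodic geodesics are dense; it then pastes one period of $\gamma$ with one period of a nearby boundary-crossing periodic geodesic $\gamma'$ into a pseudo-orbit, closes it by the Anosov closing lemma, and applies the exponential-slacking estimate to obtain error bounds of order $\delta\,\mathrm{Lip}(\beta)$, independent of the periods, before sending $\delta\to 0$. You instead invoke Bowen's specification property --- justified via the standard argument that a transitive Anosov geodesic flow cannot be a constant-time suspension, hence is topologically mixing --- to shadow $k$ repetitions of $\gamma$ together with one fixed short orbit segment in the exterior, producing a boundary-crossing periodic orbit on which the integral vanishes; you then amortize the shadowing error $O(kT\,\omega_\beta(\eps))$ and the $k$-independent gap error by dividing by $k$ and letting $k\to\infty$ before $\eps\to 0$. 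Your approach entirely sidesteps the nowhere-density lemma, which is a genuine simplification; the trade-off is that you use the stronger specification property in place of the closing lemma, and your per-unit-time error bound $T\,\omega_\beta(\eps)$ is coarser than the paper's slacking bound but becomes sufficient after the $k\to\infty$ averaging. Both arguments are valid.
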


Assuming the lemma we can easily finish the proof by applying the Livshits Theorem~\ref{Liv} to $\beta$ and $X$ to obtain a $C^{r_-}$ solution $u\colon\Sigma^{ext}\to\mathbb R$ to the cohomological equation $Xu=\beta$.
To see that $u|_{\partial(T^1\Sigma)}=0$, pick a dense geodesic which intersect $\partial(T^1\Sigma)$ in a dense sequence of points $\{v_n\}_{n\in\mathbb Z}$. Because the integral of $\beta$ from $v_n$ to $v_{n+1}$ vanishes, by Newton's formula we have that $u(v_n)=u(v_{n+1})=const$ for all $n$. Hence, after subtracting the constant we indeed have $u|_{\partial(T^1\Sigma)}=0$.
\end{proof}
To finish the proof of Corollary~\ref{thm_Livshits}, we need to establish the lemma. This lemma is established using a standard shadowing argument.
\begin{proof}[Proof of Lemma~\ref{lemma_aprx}]
Recall that the trapped set $\Lambda\subset\Sigma$ consists of all geodesics which are entirely contained in the interior of $\Sigma$. In particular, $\gamma\subset\Lambda$. Without loss of generality, we may assume that $\Sigma$ is connected since $\gamma$ lies in one of the connected components of $\Sigma$.

We begin by observing that $\Lambda$ has a local product structure. Indeed, given a pair of sufficiently close points $x,y\in\Lambda$ the ``heteroclinic point" $[x,y]= W^s(x,\eps)\cap W^{0u}(y,\eps)$ stays close to the orbit of $x$ in the future and close to the orbit of $y$ in the past and, hence, remains in the interior of $\Sigma$ as well.

The first step of the proof is show that $\Lambda$ is nowhere dense. Assume that $\Lambda$ has non-empty interior $int(\Lambda)$. Let $\bar\Lambda$ be the closure of $int(\Lambda)$. It is easy to see that $int(\Lambda)$ and $\bar\Lambda$ still have a local product structure. (Hyperbolic set $\bar\Lambda$ could be a proper subset of $\Lambda$, for example, when $\Lambda$ has an isolated periodic orbit.) Note that $\bar\Lambda$ has positive volume. The restriction of the Sasaki volume to $\bar \Lambda$ is an ergodic measure. Therefore, by ergodicity, there exists a point $p\in int(\Lambda)$ whose forward orbit and backward orbits are both dense in $int(\Lambda)$ and, hence, are also dense in $\bar\Lambda$. Because $p$ is in the interior we have $W^s(p,\eps)\cup W^u(p,\eps)\subset\bar\Lambda$ for a sufficiently small $\eps>0$. Then for any $x\in\bar\Lambda$ we can pick forward iterates of $p$ which converge to $x$ and, hence, because $\bar\Lambda$ is closed and $W^u(p,\eps)$ expands, we have $W^u(x)\subset \bar\Lambda$. In the same way, by considering backwards orbit of $p$ we also have $W^s(x)\subset \bar\Lambda$. Finally, from the local product structure, for sufficiently small $\varepsilon>0$ we have $$\{x\}\neq\mathcal P_{\varepsilon}(x) = \{[y,z]=W^s(y,\varepsilon)\cap W^{0u}(z,\varepsilon)\in\bar\Lambda \,|\, y\in W^{0u}(x,\varepsilon), z\in W^{s}(x,\varepsilon)\}\subset\bar\Lambda.$$ In particular, $\mathcal P_{\varepsilon}(x)$ contains a neighborhood of $x$. Thus, we conclude that $x$ in an interior point of $\bar\Lambda$. This gives that the closed set $\bar\Lambda$ is also open which gives a contradiction because $\bar\Lambda$ is a proper subset of $\Sigma$.

Now we can use an approximation argument to show that $\int_\gamma\beta dt=0$. Let $p\in\gamma$ and let $q\in\gamma'$ be a point which is $\delta$-close to $p$ on a periodic geodesic $\gamma'$ which intersects the boundary of $\Sigma$. Existence of such a point $q$ follows from density of periodic orbits and the fact that $\Lambda$ is a closed nowhere dense set.

We now form a pseudo-orbit by pasting $\gamma$ and $\gamma'$ together and using Anosov closing lemma to produce a periodic orbit $\alpha$ which passes close to $[p,q]$ and first shadows $\gamma$ and then $\gamma'$; see Figure~\ref{fig: shadowing}. Clearly, such $\alpha$ intersects the boundary of $\Sigma$ as well and, hence, $\int_\alpha\beta dt=0$. Orbit $\alpha$ can be partitioned into 3 segments: one which shadows $\gamma$, one which shadows $\gamma'$  and a short remainder segment which appears due to joint non-integrability of strong foliations. More precisely, we let $\alpha=\alpha_1\cup\alpha_2\cup\alpha_3$, where $\alpha_1$ has the same length as $\gamma$ and relates to $\gamma$ via unstable-stable holonomy. Segment $\alpha_1$ is followed by $\alpha_2$ has the same length as $\gamma'$ and relates to $\gamma'$ via unstable-stable holonomy. Note that if we want the starting point of $\alpha_2$ to be related to $q$ via unstable-stable holonomy (as indicated on the figure) then we might need to reposition $q$ along $\gamma'$ to achieve that. Finally, the remaining segment $\alpha_3$ has length $<3\delta$ by application of triangle inequality. (For simplicity, we assume that $|\alpha|>|\gamma|+|\gamma'|$, if that's not the case then $\alpha_1$ and $\alpha_2$ would overlap and $\alpha_3$ would the the overlap; the same proof works in this case.)

\begin{figure}[h]
\centering
\includegraphics[scale=0.45]{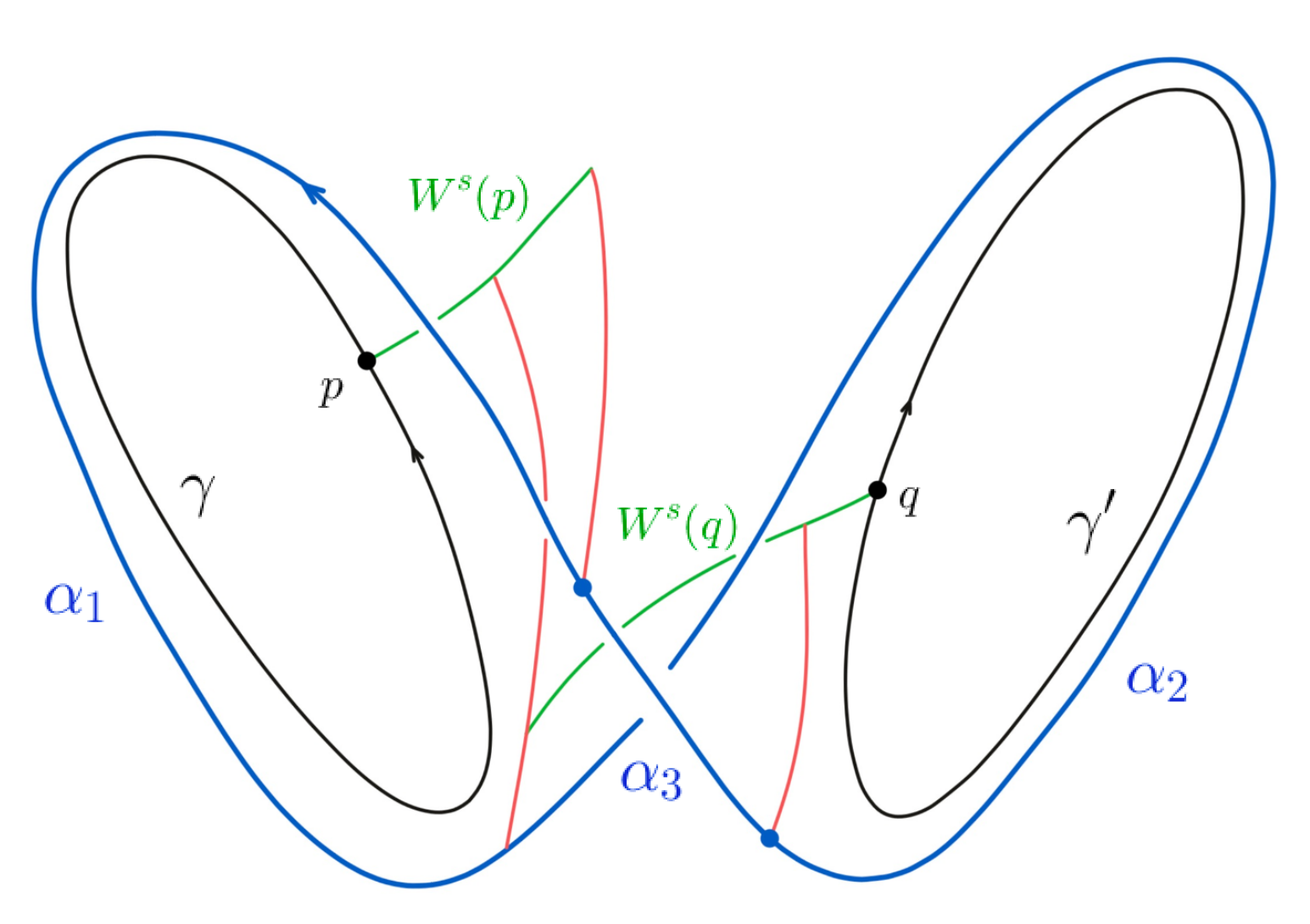}
\caption{Shadowing of $\gamma$ and $\gamma'$. Here we use green (resp. red) curves to denote stable (resp. unstable) manifolds.}
\label{fig: shadowing}
\end{figure}

By the standard ``exponential slacking" argument which is used in the proof of the Livshits Theorem~\cite{L71} we have
$$
\left|\int_\gamma\beta dt-\int_{\alpha_1}\beta dt   \right|\le\delta Lip(\beta)
$$
and
$$
\left|\int_{\gamma'}\beta dt-\int_{\alpha_2}\beta dt   \right|\le\delta Lip(\beta)
$$
where $Lip(\beta)$ is the Lipschitz constant of $\beta$.
\begin{remark} For the first difference an obvious crude upper bound $\delta Lip(\beta)|\gamma|$ would suffice. However for $\gamma'$ the above better bound is needed because the length of $\gamma'$ goes to $+\infty$ as $\delta\to 0$. 
\end{remark}
Because the end-points of $\alpha_3$ are $\delta$-close to $p$ and $q$ we also have
$$
\left|\int_{\alpha_3}\beta dt  \right|\le 3\delta\|\beta\|_{C^0}
$$
Also recall that $\int_{\gamma'}\beta dt=\int_{\alpha}\beta dt=0$. Putting these together we have
\begin{multline*}
\left|\int_{\gamma}\beta dt  \right|\le\delta Lip(\beta)+\left|\int_{\alpha_1}\beta dt  \right| \le \delta Lip(\beta)+\left|\int_{\alpha}\beta dt  \right| +\left|\int_{\alpha_2}\beta dt  \right| +\left|\int_{\alpha_3}\beta dt  \right|  \\
\le \delta Lip(\beta)+\left|\int_{\gamma'}\beta dt  \right|+\left|\int_{\gamma'}\beta dt-\int_{\alpha_2}\beta dt   \right|+3\delta\|\beta\|_{C^0}\le 2\delta Lip(\beta)+3\delta\|\beta\|_{C^0}
\end{multline*}
Taking $\delta\to 0$ we obtain $\int_\gamma\beta dt=0$.
\end{proof}

\section{A Jacobi estimate for geodesics which enter a domain with hyperbolic trapped set}\label{sec: mu_estimate_domain}

Following the outline of the proof (Section~\ref{outline of the proof}), we want to control the growth rates of the logarithm of the square of the norm of nonzero perpendicular Jacobi fields for the constructed compact Riemannian manifold. Consider a geodesic $\gamma$ and let $\tau_{M,\max}(\gamma)$ be the length of a maximal time interval so that the geodesic is in the given Riemannian manifold $M$ with boundary. In the presence of a trapped set, $\tau_{M,\max}(\gamma)$ can be arbitrarily large as a geodesic can be in the trapped set or accumulate for arbitrarily long time on it. Since the trapped set is hyperbolic, we can show that we have a ``good" control on the growth rates of the logarithm of the square of the norm of nonzero perpendicular Jacobi fields in a neighborhood of the trapped set. The precise result is the following proposition.

\begin{proposition}\label{prop: highent_highexit}
Let $(M, g)$ be a manifold with boundary. Assume that $(M, g)$ has no conjugate points and a (possibly empty) hyperbolic trapped set $\Lambda$. Then, there exists constants $Q_M>0$ and $ C_M>0$, which depend only on $M$, such that  for any $v\in\partial_-$ and a perpendicular Jacobi field $J$ along $\gamma_v$ with $\mu_J(0)>Q_M$,  $J$ does not vanish as long as $\gamma_v$ lies in $M$. Moreover, the following properties hold

(1) If $v\in \Gamma_-$, then $\|J\|(t)\to\infty$ as $t\to\infty$.

(2) If $v\notin \Gamma_-$, then $\mu_J(l_g(v))>-Q_M$ and $\int_0^{l_g(v)}\mu_J(\tau)d\tau\geq -C_M$. 

(3) For any sufficiently small $\delta>0$, let $M_{-\delta}:=\{x\in M| dist_g(x,\partial M)\geq \delta\}$. Then, (1) and (2) remain valid with the same $Q_M$ and $ C_M$ if we replace $M$ with $M_{-\delta}$.

\end{proposition}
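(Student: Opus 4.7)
The plan is to split the orbit of $\gamma_v$ into pieces inside a fixed neighborhood $\mathcal{U}$ of the trapped set $\Lambda$, where the hyperbolic structure from Lemma~\ref{lem: perp_component} controls the Jacobi field, and pieces outside $\mathcal{U}$, where the bounded-time estimate of Corollary~\ref{cor: bdd_time_bdd_mu} applies. First I would fix $\mathcal{U}$ as in Lemma~\ref{lem: perp_component}, so that $|U^u_v|,|U^s_v|\leq L$ on $\mathcal{U}$. A standard compactness argument, exploiting that $\Lambda$ is the maximal invariant set of the geodesic flow contained in $M$, yields a uniform upper bound $\tau_0$ on the length of any orbit segment contained in $M\setminus\mathcal{U}$; otherwise one could extract an orbit trapped in $M\setminus\mathcal{U}$, contradicting $\Lambda\subset\mathcal{U}$. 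Setting $Q_0:=Q(\tau_0,g)$ from Corollary~\ref{cor: bdd_time_bdd_mu}, I would choose $Q_M$ to exceed both $Q_0$ and a multiple of $L$ determined by the transverse angle between the perpendicular invariant bundles $E^u_\perp$ and $E^s_\perp$ on $\mathcal{U}$.

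If $\gamma_v$ exits $M$ without ever entering $\mathcal{U}$, then $l_g(v)\leq\tau_0$ and Corollary~\ref{cor: bdd_time_bdd_mu} directly yields all three conclusions; it then suffices to take $C_M\geq Q_0\tau_0$. Otherwise $\gamma_v$ first enters $\mathcal{U}$ at some time $t_1\in[0,\tau_0]$, at which point Corollary~\ref{cor: bdd_time_bdd_mu} already gives $\mu_J(t_1)>-Q_0$ and non-vanishing of $J$ on $[0,t_1]$. At $t_1$ I would decompose the Jacobi initial data $(J(t_1),J'(t_1))$ using the invariant splitting $T(T^1 M)|_{\mathcal{U}}=\mathbb{R}X\oplus E^u_\perp\oplus E^s_\perp$, writing $J|_{\mathcal{U}}=J^u+J^s$. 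The hyperbolic bounds from Lemma~\ref{lem: perp_component}(4) then give exponential growth of $\|J^u(t)\|$ and exponential decay of $\|J^s(t)\|$ along the portion of $\gamma_v$ contained in $\mathcal{U}$.

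The crux, and the main obstacle, is promoting the one-sided bound $\mu_J(t_1)>-Q_0$ to a quantitative transversality estimate $\|J^u(t_1)\|\geq c\,\|J(t_1)\|$ for some fixed $c>0$. Intuitively, if $J(t_1)$ were nearly stable then $\mu_J(t_1)$ would be close to a Rayleigh quotient of $U^s$ and hence bounded in absolute value by a multiple of $L$, so choosing $Q_M$ large enough rules out this degeneracy (and the same argument applied at other times inside $\mathcal{U}$ shows that $\mu_J$ itself cannot drop far below $-L$ while the unstable component dominates). With transversality in hand, the unstable component dominates throughout the time spent in $\mathcal{U}$: $J$ cannot vanish, $\|J(t)\|\to\infty$ in the trapped case $v\in\Gamma_-$ (proving (1)), and in the non-trapped case any subsequent short excursion through $M\setminus\mathcal{U}$ (of length $\leq\tau_0$) contributes only a bounded additive loss to $\mu_J$ and a bounded multiplicative loss to $\|J\|$, both absorbed into $Q_M$ and $C_M$, giving (2). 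Finally, (3) is immediate: for all sufficiently small $\delta$ we have $\Lambda\subset\mathrm{int}(M_{-\delta})$, so the same $\mathcal{U}$ and $\tau_0$ (up to harmless adjustment) apply to $M_{-\delta}$ and the constants $Q_M,C_M$ remain valid.
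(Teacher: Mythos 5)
Your overall structure is right and matches the paper's strategy: fix a neighborhood $\mathcal{U}$ of $\Lambda$ where Lemma~\ref{lem: perp_component} gives a hyperbolic splitting, obtain (by compactness) a uniform bound on the time a geodesic segment can spend in $M\setminus\mathcal{U}$, use Corollary~\ref{cor: bdd_time_bdd_mu} on those excursions, and prove transversality of $J$ to the stable direction upon entering $\mathcal{U}$.

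However, the transversality step --- which you correctly identify as the crux --- has a genuine gap. You propose to conclude that $J$ cannot be nearly stable at the entry time $t_1$ because, if it were, $\mu_J(t_1)$ would be a Rayleigh quotient of $U^s$ and hence bounded in magnitude by a multiple of $L$, contradicting $Q_M$ being large. But $Q_M$ only bounds $\mu_J(0)$; after traversing the segment $[0,t_1]$ of length up to $\tau_0$ through $M\setminus\mathcal{U}$, Corollary~\ref{cor: bdd_time_bdd_mu} yields only the one-sided bound $\mu_J(t_1) > -Q_0$. That does not preclude $\mu_J(t_1)$ from lying in $[-L,L]$, so the Rayleigh-quotient contradiction never materializes. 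There is no direct way to propagate the largeness of $\mu_J(0)$ to $\mu_J(t_1)$: the comparison ODE shows $\mu_J$ can relax from $+\infty$ to $O(1)$ in time of order $1/\sqrt{K_{\max}}$, and $\tau_0$ need not be smaller than that. The parenthetical remark that ``the same argument applied at other times inside $\mathcal{U}$ shows that $\mu_J$ cannot drop far below $-L$ while the unstable component dominates'' is circular --- it uses the dominance you are trying to establish.

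The paper circumvents this with a sharper argument (Proposition~\ref{prop: big_unstable_comp}): a contradiction/compactness argument in which one assumes a sequence $v_n\to v$ with $l_g(v_n)\to\infty$ whose Jacobi fields have vanishingly small unstable components at time $T_0$. The limit is a geodesic with $v\in\Gamma_-$, hence trapped forward in time, and a \emph{stable} perpendicular Jacobi field along it. By Eberlein's a priori bound (\cite[Prop.\ 2.11]{E73}), such a field satisfies $|\mu_J|\leq k$ for \emph{all} $t\geq 0$, including $t$ in the initial window $[0,T_0]$ before $\gamma_v$ enters $\mathcal U$. This contradicts $\mu_{J_n}(t_n)>k+1$. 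The crucial point is that the a priori bound for stable Jacobi fields is global in time, so the hypothesis on $\mu_J$ at time $0$ (not time $t_1$) suffices. Your proposal would need to be rewritten to use this kind of limiting argument; a pointwise Rayleigh-quotient estimate at $t_1$ cannot close the argument.
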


In order to prove Proposition \ref{prop: highent_highexit} we need to analyze the behavior of Jacobi fields $J$ near the hyperbolic trapped set $\Lambda$.

\subsection{Neighborhood of hyperbolic trapped set}
For any $T\geq 0$, let 
$$\mathcal U_T(M) := T^1M-\bigcup\limits_{-T\leq t\leq T}\phi^t(\partial T^1M).$$ 
It is clear that $\mathcal U_T(M_{-\delta})\subseteq \mathcal U_{T+\delta}(M)\subseteq \mathcal U_{T}(M)$. Moreover the following simple lemma shows that $\mathcal U_T(M)\to\Lambda$ as $T\to \infty$.

\begin{lemma}\label{lemma_nbh_of_trapping_set}
For any $\eta>0$ there exists $T_0=T_0(\eta)$ such that $\mathcal O_{\eta}(\Lambda)\supset\mathcal U_{T_0}$, where $\mathcal O_{\eta}(\Lambda)$ is the open $\eta$-neighborhood of $\Lambda$ in the Sasaki metric.
\end{lemma}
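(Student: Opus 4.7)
The plan is a compactness--contradiction argument combined with lower semicontinuity of the exit times of the geodesic flow. Suppose the conclusion fails: there exist $\eta_0>0$, a sequence $T_n\uparrow\infty$, and points $v_n\in\mathcal U_{T_n}$ with $v_n\notin\mathcal O_{\eta_0}(\Lambda)$ for every $n$. Since $T^1M$ is compact, pass to a subsequence with $v_n\to v^\ast$; continuity of the distance to $\Lambda$ then gives $d_{\text{Sasaki}}(v^\ast,\Lambda)\geq\eta_0$, so in particular $v^\ast\notin\Lambda$.

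The bulk of the proof consists in showing $v^\ast\in\Lambda$, which yields the desired contradiction. Fix any $T>0$. For all sufficiently large $n$ we have $T_n\geq T$, so $v_n\in\mathcal U_T$, which means $\phi^t(v_n)\notin\partial T^1M$ for every $|t|\leq T$. Taking $n\to\infty$ I would conclude the same for $v^\ast$, and then letting $T\to\infty$ would place the entire biinfinite orbit of $v^\ast$ in $(T^1M)^\circ$, which by definition gives $v^\ast\in\Lambda$.

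The one subtle step is justifying this passage to the limit, since a priori the orbit of $v^\ast$ could glance or cross $\partial T^1M$ even when the approximating orbits do not. This is where strict convexity of $\partial M$ (a standing hypothesis in the paper) enters: it ensures that the forward and backward exit times $\tau_\pm\colon T^1M\to[0,\infty]$ are lower semicontinuous. Indeed, a geodesic tangent to $\partial M$ is immediately pushed back into $M^\circ$ by strict convexity, so tangencies alone do not cause escape; a transverse crossing of the limit orbit at some time $t_0$ would force every sufficiently nearby orbit to leave $M$ near time $t_0$, contradicting $\tau_\pm(v_n)\geq T_n\to\infty$. Lower semicontinuity then gives $\tau_\pm(v^\ast)=\infty$, placing $v^\ast\in\Lambda$. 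I expect the compactness and final contradiction to be automatic; the only point requiring a careful (but standard) verification is the lower semicontinuity of $\tau_\pm$ under strict convexity of $\partial M$, and this is the main obstacle.
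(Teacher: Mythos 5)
Your overall strategy---compactness, pass to a limit $v^\ast$ of the sequence $v_n\in\mathcal U_{T_n}$, and then argue that $v^\ast$ must lie in $\Lambda$---is exactly the paper's strategy, and you are right that the delicate point is the passage to the limit. The paper handles this implicitly through the set identity $\overline{\mathcal O_{\eta_0/2}(\Lambda)}=\bigcap_n\overline{\mathcal O_{\eta_0/2}(\mathcal U_n)}$, which it states without proof but which in fact rests on the same ``no glancing'' consideration you flag; you make the issue explicit via semicontinuity of exit times, which is a reasonable and arguably cleaner way to organize it.

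However, your treatment of tangencies is backwards, and as stated it leaves a genuine gap. You write that under strict convexity ``a geodesic tangent to $\partial M$ is immediately pushed back into $M^\circ$, so tangencies alone do not cause escape.'' The opposite is true: strict convexity of $\partial M$ (positive definite second fundamental form with respect to the outward normal) means that a geodesic tangent to $\partial M$ at a point lies \emph{outside} $M$ on both sides of the tangency, except at the tangent point itself. Indeed, in Fermi coordinates $dt^2+g_t$ near the boundary, $\ddot t = \II_{\partial M}(\dot\theta,\dot\theta)>0$ at a tangency (where $\dot t=0$), so $t$ has a strict local minimum there and increases past the boundary on both sides. This matters: if tangencies ``did not cause escape,'' your argument would not yield $v^\ast\in\Lambda$, because a tangential touch of $\gamma_{v^\ast}$ with $\partial M$ still puts the orbit of $v^\ast$ on $\partial T^1\Sigma$, hence outside $\Lambda$ by definition. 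The fix is that tangencies, just like transversal crossings, are ruled out by the same limiting argument: if $\gamma_{v^\ast}(t_0)\in\partial M$ tangentially, then $\gamma_{v^\ast}(t_0\pm\epsilon)\notin M$ for small $\epsilon>0$, whereas $\gamma_{v_n}(t_0\pm\epsilon)\in M^\circ$ converges to $\gamma_{v^\ast}(t_0\pm\epsilon)$ and $M$ is closed, a contradiction. With this correction the argument goes through and matches the intended reasoning of the paper.
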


\begin{proof}
Notice that for any $T\geq 0$ we have $\mathcal U_T$ is an open set and $\Lambda\subset \mathcal U_T$. Assume that the conclusion of the lemma does not hold. Then, there exists $\eta_0>0$ such that for any $n\in\mathbb N$ we have $\mathcal O_{\eta_0}(\Lambda)\not\supset \mathcal U_n$. In particular, for any $n\in\mathbb N$ there exists $x_n\in T^1M$ such that $x_n\in \mathcal U_n - \mathcal O_{\eta_0}(\Lambda)$. Moreover, $\mathcal U_{n+1}\subset\mathcal U_{n}$ for any $n\in\mathbb N$, and, by the definition of the trapped set, we have $\Lambda=\bigcap\limits_{n\in\mathbb N}\mathcal U_{n}$. 

By the compactness of $T^1M$, we obtain that there exists $x\in T^1M$ such that $x_n\rightarrow x$  in the Sasaki metric as $n\rightarrow +\infty$. Moreover, since $x_n\notin \mathcal O_{\eta_0}(\Lambda)$, we have that $x\notin \mathcal O_{\eta_0}(\Lambda)$. Also, $\overline{\mathcal O_{\frac{\eta_0}{2}}(\Lambda)}=\bigcap \limits_{n\in\mathbb N}\overline{\mathcal O_{\frac{\eta_0}{2}}(\mathcal U_{n})}$. In particular, there exists $j\in\mathbb N$ such that $x\in T^1M - \overline{\mathcal O_{\frac{\eta_0}{2}}(\mathcal U_i)}$ for any $i\geq j$. Thus, we obtain the contradiction to the fact that $x_n\rightarrow x$ as $n\rightarrow +\infty$ because for any $i\geq j$ we have $x_i\in \mathcal U_i$, so the distance between $x$ and $x_i$ is at least $\frac{\eta_0}{2}$.
\end{proof}

\subsection{Invariant Jacobi fields near $\Lambda$}

Let $\mathcal{U}$ be the open neighborhood as in Lemma \ref{lem: perp_component} with constant $C'$. We pick $T_0$ satisfying $\mathcal{U}_{T_0}\subseteq \mathcal{U}$ using Lemma  \ref{lemma_nbh_of_trapping_set}. For each $v\in \mathcal{U}$ and $w\in v^\perp$, let $\xi^{\sigma}_w$ be the vectors defined in Lemma \ref{lem: perp_component}(\ref{linear_isom}) and denote by $J^{\sigma}_{w}:=J_{\xi^{\sigma}_w}$. We have 
\begin{equation}\label{eq: 2nd_fund_form}
(J^{\sigma}_{w})'(t)=U_{\phi_t v}^{\sigma}J^{\sigma}_{w}(t).
\end{equation}
By Lemma \ref{lem: perp_component}\eqref{norm_bound_in_U}, there exists $L>0$, which is independent of $v$, such that 
$|U^\sigma_{v}|\leq L \text{ for all } v\in \mathcal{U}.$ Together with \eqref{eq: sasaki_jacobi} and \eqref{eq: 2nd_fund_form} we know that whenever $\phi^tv\in \mathcal{U}$ we have
\begin{equation}\label{eq: U_bound}
|d\phi_t(\xi^{\sigma}_w)|^2=\|J^{\sigma}_w\|^2(t)+\left(\|(J^{\sigma}_w)\|'(t)\right)^2\leq (1+L^2)\|J_w^{\sigma}\|^2(t),
\end{equation}
for all $w\in v^{\perp}$. Here $|\cdot|$ is the Sasaki norm defined in Section \ref{section: geometry tangent bundle}. Notice that the constants $C', L, \eta$ depend only on $\mathcal{U}$. 


\subsection{Decompostition of Jacobi fields near $\Lambda$}

Let $J$ be a perpendicular Jacobi field along $\gamma_v$ for some $v\in\partial^-$.  Let $T_0$ be the constant in Lemma \ref{lemma_nbh_of_trapping_set}.  When $l_g(v)>2T_0$,  let $\xi\in T_{\phi_{T_0}(v)}T^1M$ be the tangent vector at $\phi_{T_0}(v)$ with $J(t+T_0)=J_{\xi}(t)$. Since $\phi_{t}(v)\in \mathcal U$ for $t\in [T_0, l_g(x,v)-T_0]$, by Lemma \ref{lem: perp_component}, we can decompose $\xi$ as
$$\xi=\xi^s+\xi^u,$$
where $\xi^{\sigma}\in E^{\sigma}_{\perp}(\phi_{T_0}(v))$ for $\sigma= s,u$.
This decomposition can be represented in terms of Jacobi fields as follows:
$$J(t)=J^s(t-T_0)+J^u(t-T_0), \forall t\in [T_0, l_g(v)-T_0],$$
where $J^{\sigma}=J_{\xi_{\sigma}}\in \mathcal{J}^{\sigma}_{\perp}(\phi_{T_0}(v))$. The following proposition shows that the unstable component of $\xi$ cannot be too small when $\mu_J(0)$ and $l_g(v)$ are sufficiently large.
 
\begin{proposition}\label{prop: big_unstable_comp}
Assume the sectional curvature of $M$ is bounded from below by $-k^2$. Let $Q(T_0,g)$ be the constant defined in Corollary \ref{cor: bdd_time_bdd_mu}. Then there exists $D, \zeta>0$ depending on $\Lambda$ and $\mathcal{U}$ such that for any $v\in\partial_-$ with $l_g(v)>2T_0+D$, and any perpendicular Jacobi field $J$ along $\gamma_v$ with $\mu_J(t)>\max\{k+1, Q(T_0,g)\}$  for some $t\in[0,T_0]$, we have $|\xi^u|\geq \zeta |\xi|$.
\end{proposition}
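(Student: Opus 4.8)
The plan is to argue by contradiction and exploit the hyperbolicity of the trapped set via the locally invariant splitting from Lemma~\ref{lem: perp_component}. Suppose the conclusion fails: then for every $n\in\mathbb N$ there is $v_n\in\partial_-$ with $l_g(v_n)>2T_0+n$ and a perpendicular Jacobi field $J_n$ along $\gamma_{v_n}$, which we normalize so that the vector $\xi_n\in T_{\phi_{T_0}(v_n)}T^1M$ representing $J_n(\cdot+T_0)$ has $|\xi_n|=1$, satisfying $\mu_{J_n}(t_n)>\max\{k+1,Q(T_0,g)\}$ for some $t_n\in[0,T_0]$ but $|\xi_n^u|<\frac1n$. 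First I would record what the smallness of the unstable component buys us: writing $\xi_n=\xi_n^s+\xi_n^u$ in the splitting $E^s_\perp\oplus E^u_\perp$ at $\phi_{T_0}(v_n)$, the stable part dominates, $|\xi_n^s|\to 1$, and by Lemma~\ref{lem: perp_component}\eqref{rate_for_perp} the stable part decays, $|d\phi_t(\xi_n^s)|\le C'e^{-\frac\lambda2 t}|\xi_n^s|$, for all $t$ up to the exit time from $\mathcal U$; combined with \eqref{eq: U_bound} this forces $\|J_n^s(t)\|$ to become small on a long time interval $[0, l_g(v_n)-2T_0]$, whose length tends to infinity. The unstable part, being tiny in norm, cannot compensate on the initial portion of this interval, so $\|J_n(t+T_0)\|$ is forced to be small for $t$ in a long initial window.

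The core of the argument is then a clash with Corollary~\ref{cor: bdd_time_bdd_mu} (or rather with the $\mu$-monotonicity machinery behind it). The hypothesis $\mu_{J_n}(t_n)>\max\{k+1,Q(T_0,g)\}$ with $t_n\in[0,T_0]$ says the Jacobi field is ``strongly expanding'' at a time before entering the deep part of $\mathcal U$. Using Lemma~\ref{lem: comp_thm} with the lower curvature bound $-k^2$ — more precisely, comparing $\mu_{J_n}$ from below against the solution of $u''-k^2u=0$ with initial logarithmic derivative $>k$, which stays positive and bounded below by something like $k\tanh$ or a positive constant for all positive time — I would show that once $\mu_{J_n}$ exceeds $k+1$ at time $t_n$, it stays bounded below by a fixed positive constant $c_0=c_0(k)>0$ for all subsequent time that $\gamma_{v_n}$ remains in $M$. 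Hence $\|J_n(t)\|$ is \emph{nondecreasing}, in fact grows at least like $e^{c_0(t-t_n)}$, for $t\ge t_n$. This directly contradicts the conclusion of the previous paragraph that $\|J_n(t+T_0)\|$ becomes small on a window of length growing to infinity: choosing $D$ large enough (depending on $C'$, $L$, $\lambda$, $k$, $T_0$) makes the two estimates incompatible already for $n$ large. This yields the existence of $\zeta>0$, depending only on $\Lambda$ (through $C'$, $L$, $\lambda$) and on $k$, $T_0$, with $|\xi^u|\ge\zeta|\xi|$.

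I expect the main obstacle to be bookkeeping the two competing estimates on a common time interval and making sure the unstable component really is negligible there: one must check that $|d\phi_t(\xi^u)|$ cannot grow fast enough on $[0, D']$ (for the fixed length $D'$ on which we derive the contradiction) to rescue $\|J_n\|$ from the smallness forced by the decaying stable part, given that $|\xi^u|<\frac1n\to 0$; this uses only the crude two-sided bound $|d\phi_t\xi|\le e^{C|t|}$ coming from compactness of $M$ plus \eqref{eq: U_bound}, so it is robust, but the quantifiers (first fix $D$ from $C',L,\lambda,k,T_0$, then send $n\to\infty$) must be ordered carefully. A secondary subtlety is that $\mu_{J_n}$ is only defined where $J_n\neq 0$; but the comparison lemma shows $J_n$ does not vanish past $t_n$ as long as $\gamma_{v_n}\subset M$, so $\mu_{J_n}$ is well defined on all of $[t_n, l_g(v_n)]$ and the argument goes through. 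No separate treatment of the $\Lambda=\emptyset$ case is needed, since then $l_g$ is bounded and the hypothesis $l_g(v)>2T_0+D$ is vacuous for $D$ large.
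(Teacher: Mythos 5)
Your overall setup (contradiction, normalization $|\xi_n|=1$, $|\xi_n^u|<\tfrac1n$, decay of the stable part via Lemma~\ref{lem: perp_component}\eqref{rate_for_perp} and \eqref{eq: U_bound}) matches the paper's, but the key step of your argument is not valid. You claim that from the \emph{lower} curvature bound $-k^2$ and Lemma~\ref{lem: comp_thm} one gets: once $\mu_{J_n}(t_n)>k+1$, then $\mu_{J_n}$ stays bounded below by a positive constant $c_0(k)$ (hence $\|J_n\|$ grows exponentially) for all later time that $\gamma_{v_n}$ stays in $M$. This is a wrong-direction comparison. Lemma~\ref{lem: comp_thm} requires an \emph{upper} bound $f$ on the sectional curvature; here the only available upper bound is some positive constant coming from compactness, for which the comparison solutions of $u''+fu=0$ oscillate — this is precisely why Corollary~\ref{cor: bdd_time_bdd_mu} only controls $\mu_J$ on a time interval of bounded length $\tau$ and only gives $\mu_J>-Q$. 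The persistence claim itself is false in this generality: on a flat region $\mu_J$ decays like $1/(t+c)$, and in regions of positive curvature $\mu_J$ can become negative and $J$ may even vanish once later (no conjugate points only forbids two zeros). The bound by $k$ coming from curvature $\ge -k^2$ holds only for the special stable/unstable Jacobi fields defined along complete rays (Eberlein's Riccati estimate, \cite[Prop.~2.11]{E73}), not for an arbitrary Jacobi field with $\mu_J$ momentarily large. There is also a secondary bookkeeping gap you flag but do not resolve: your contradiction needs a lower bound on $\|J_n(T_0)\|$ (or $\|J_n(t_n)\|$), and none is available, since the Sasaki normalization $|\xi_n|=1$ may be carried almost entirely by $\|J_n'(T_0)\|$.

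The paper's proof avoids all of this by a soft compactness argument: pass to a subsequence $t_n\to t$, $v_n\to v$; since $l_g(v_n)\to\infty$ the limit geodesic is trapped ($v\in\Gamma_-$), and since $|\xi_n^u|\to 0$ and the bundles $E^{\sigma}_{\bot}$ are continuous (Lemma~\ref{lem: perp_component}\eqref{cont_inv_bundle}), the limit field $J$ is a purely stable Jacobi field along a geodesic staying in $M$ for all forward time. For such a field Eberlein's estimate gives $|\mu_J|\le k$ — this is the only place the lower curvature bound $-k^2$ enters — while continuity gives $\mu_J(t)\ge k+1$, a contradiction. So the hypothesis $\mu_J>k+1$ is there to contradict the stable-field Riccati bound in the limit, not to propagate positivity of $\mu_{J_n}$ forward along each $\gamma_{v_n}$. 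If you want a quantitative (non-compactness) proof, you would have to work with the hyperbolic structure in $\mathcal U$ (e.g.\ the bound $|U^{\sigma}_v|\le L$ of Lemma~\ref{lem: perp_component}\eqref{norm_bound_in_U}) rather than with the curvature bound, and track norms relative to $\|J_n(T_0)\|$; as written, the proposal does not close.
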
 

\begin{proof}
We argue by contradiction. Assume that we can find $t_n\in[0,T_0]$, $v_n\in\partial_-$ with $l_g(v_n)\to\infty$ and $J_n$ perpendicular Jacobi fields along $\gamma_{v_n}$ with $\mu_{J_n}(t_n)>k+1$ but at the same time $|\xi^u_n|<\frac{1}{n} |\xi_n|$. We may assume $t_n\to t$, $v_n\to v$ by passing to a subsequence and it is clear that $\gamma_v$ stays in $\mathcal{U}_{T_0}$ for $t\geq T_0$. In particular, $v\in \Gamma_-$. 

By definition of $Q(T_0,g)$, $J_n(t)\neq 0$ for all $n$ and $t\in[0,T_0]$. Without loss of generality we assume that $|\xi_n|=1$ for all $n$ thus $J_n\to J$ for some Jacobi field $J$ along $\gamma_v$. By Lemma \ref{lem: perp_component} the invariant bundles depend continuously on the base vectors, thus the projection to invariant components of Jacobi fields through $\mathcal U$ is continuous. Hence we have $J(t)=J^s(t-T_0)$ for $t\geq T_0$. Since $\gamma_v$ stays in $M$ for $t\geq T_0$, we also have $|\mu_J|\leq k$ by \cite[Proposition 2.11]{E73}.  On the other hand, since $\mu_{J_n}(t_n)>k+1$ for all $n$ and $J_n\to J, t_n\to t$, we have $\mu_{J}(t)\geq k+1>k$ which provides a contradiction.
\end{proof}

\begin{proof}[Proof of Proposition \ref{prop: highent_highexit}]
Take $\widetilde{T}>D$ so that 
$$ \zeta^2\left( \frac{C'^2}{2(1+L^2)}e^{\lambda \widetilde{T}}-C'^2e^{-\lambda \widetilde{T}}\left(2+\frac{2}{\zeta^2}\right) \right)>1.$$
It is clear that $\widetilde{T}$ also depends only on $\Lambda$ and $\mathcal{U}$. We take 
$$Q_M:=\max\{k+1, Q(2T_0+\widetilde{T},g)\}$$
 with $k$ as in Proposition \ref{prop: big_unstable_comp} and $Q$ given by Corollary \ref{cor: bdd_time_bdd_mu}.

First assume that $l_g(v)\geq 2T_0+\widetilde{T}$. If $\mu_J(0)\geq Q_M$, by Proposition \ref{prop: big_unstable_comp} and the parallelogram law, 
$$|\xi^s|^2=|\xi-\xi^u|^2\leq 2|\xi|^2+2|\xi^u|^2\leq \left(2+\frac{2}{\zeta^2}\right)|\xi^u|^2.$$
For all $t\in [\widetilde{T},l_g(v)-2T_0]$, by Proposition \ref{prop: big_unstable_comp},  (\ref{eq: U_bound}) and definition of hyperbolicity we have 
\begin{eqnarray}\label{eq: J_growth_hyp_nbd}
& &\|J\|^2(t+T_0)\geq  \frac{1}{2}\|J^u\|^2(t)-\|J^s\|^2(t) \geq  \frac{1}{2(1+L^2)}|d\phi_t(\xi^u)|^2-|d\phi_t(\xi^s)|^2 \nonumber\\
&\geq &\frac{C'^2e^{\lambda t}}{2(1+L^2)}|\xi^u|^2-C'^2e^{-\lambda t}|\xi^s|^2\geq \left( \frac{C'^2e^{\lambda t}}{2(1+L^2)}-C'^2e^{-\lambda t}\left(2+\frac{2}{\zeta^2}\right) \right) |\xi^u|^2 \nonumber\\
&\geq & \zeta^2\left( \frac{C'^2}{2(1+L^2)}e^{\lambda t}-C'^2e^{-\lambda t}\left(2+\frac{2}{\zeta^2}\right) \right) \|J\|^2(T_0).
\end{eqnarray}
Hence we finishes the proof of item (1). 

When $v\notin\Gamma_-$ estimate (\ref{eq: J_growth_hyp_nbd}) and our choice of $\widetilde{T}$ imply that $\|J\|(l_g(v)-T_0)>\|J\|(T_0)$, which can be written as
$$\int_{T_0}^{l_g(v)-T_0}\mu_J(\tau)d\tau\geq 0.$$
Moreover, we have $\mu_{J}(t)>-Q_M$ for all $t\in[l_g(v)-T_0, l_g(v)]$. Otherwise by reversing time, applying Proposition \ref{prop: big_unstable_comp} and repeating an argument similar to the above argument, we have $\|J\|(l_g(v)-T_0)<\|J\|(T_0)$, contradiction.

Hence when $l_g(v)\geq 2T_0+\widetilde{T}$,  we have $\mu_J(l_g(v))>-Q_M$ and 
$$\int_0^{l_g(v)} \mu_J(\tau)d\tau \geq  \int_0^{T_0} \mu_J(\tau)d\tau+\int_{l_g(v)-T_0}^{l_g(v)} \mu_J(\tau)d\tau\geq -2T_0 Q_M, $$

If $l_g(v)\leq 2T_0+\widetilde{T},$ then by Corollary \ref{cor: bdd_time_bdd_mu} we have
$$\int_0^{l_g(v)} \mu_J(\tau)d\tau \geq  -(2T_0+\widetilde{T}) Q_M.$$

Thus by taking $C_M:=-(2T_0+\widetilde{T}) Q_M$ we finish the proof of (2). The only part left is (3). Recall that all the constant $C, L, \zeta, \widetilde{T}$ depend on $\Lambda$ and its neighborhood $\mathcal U$. By replacing $M$ with $M_{-\delta}$ we still can work on a smaller neighborhood of $\Lambda$ thus the same argument goes through without any change. Thus we have finished the proof of Proposition \ref{prop: highent_highexit}.
\end{proof}

\section{Deformation to negative sectional curvature}\label{construction 1}

In this section we consider a cylinder with a given metric on a neighborhood $\mathcal O$ of one of the boundaries, and extend it to a metric on the whole cylinder so that the sectional curvatures is arbitrarily negative outside a small neighborhood of $\mathcal O$. We provide bounds on both sectional curvatures (see Section~\ref{setup notation}, Proposition~\ref{prop: collar 1,2}, Lemma~\ref{lem: bound_level_curv_C12}) and the principle curvatures of the equidistant sets. In particular, all the equidistant sets are also strictly convex. See the precise formulation of the main result Proposition~\ref{collar 1,2} of this section which is proved using the mentioned curvature bounds. 

\subsection{The setup and notation}\label{setup notation}

We use notation from Section~\ref{section: SFF}.

Let $S$ be an $(n-1)$-dimensional smooth closed manifold. For $\eps>0$, consider the product $(-\eps,0]\times S$ with a Riemannian metric 
\begin{equation}\label{isometry_boundary}
g = dt^2+g_t,
\end{equation}
where $g_t$ is the Riemannian metric on the hypersurface $S_t=\{t\}\times S$. Assume $S_0$ is strictly convex and recall that $h=2\II_{S_0}$ is the positive definite second fundamental form at $t=0$. 
For any $\theta\in S$, since $h$ is symmetric, there exists an orthonormal basis $\{e_i\}_{i=1}^{n-1}$ of $g_0$ such that $h(e_i,e_i)=2\lambda_i(0,\theta)>0$ where $\lambda_i(0,\theta)$ is the $i$-th principal curvature at $(0,\theta)$.
Our goal now is to extend the metric in a controlled way for $t>0$.

More generally to setup terminology, we can consider a manifold of the form $[a,b]\times S$ with coordinates $(t,\theta)$ where $t\in[a,b]$ and $\theta\in S$.  We say that a tangent $2$-plane $\sigma$ at $(t,\theta)$ is \textit{orthogonal} to $S_t$ if $\sigma$ contains a normal vector to $S_t$. As a result, we define \textit{orthogonal sectional curvatures} of $[a,b]\times S$ as curvatures of tangent $2$-planes orthogonal to $S_t$ for some $t\in[a,b]$. 

Let $\rho\colon\mathbb R\rightarrow [0,1]$ be a non-increasing $C^\infty$ function such that $\rho\equiv 1$ on $(-\infty,0]$ and $\rho\equiv 0$ on $[1,\infty)$. For any $\ell>0$, a function $f_{\ell}\colon\mathbb R\rightarrow\mathbb R$ is given by 
$$
f_{\ell}(t)=\frac{e^{\ell t}-1}{\ell}
,\quad t\in\mathbb R.$$

\begin{remark}
For any metric $g'$ on $S_0$, we consider its push-forward to a metric $(\pi_t)_*g'$ on $S_t$ which we still denote by $g'$ using a slight abuse of notation.
\end{remark}

\subsection{Deformation of the metric}
We prove the following result assuming Proposition \ref{prop: collar 1,2} and Lemma ~\ref{lem: bound_level_curv_C12}.
\begin{proposition}\label{collar 1,2}
(Notation of Section~\ref{setup notation}). Let $h=2\II_{S_0}$. Consider the manifold $[0,1+\eps]\times S$ with Riemannian metric
$\tilde g_{\ell,\eps} = dt^2+\tilde g_t$, where 
\begin{equation*}
\tilde g_t=\rho(t-\eps)g_0+f_{\ell}(t)h\quad\text{ for all }\quad t\in[0,1+\eps]. 
\end{equation*}
Then, for any $M_0>0$ there exists $K_g=K_g(g)$ and $L_{neg}=L_{neg}(M_0, g, \eps, \rho)>0$ such that for any $\ell>L_{neg}$ the following holds:
\begin{enumerate}[label=(\alph*)]
\item\label{upper positive} All sectional curvatures of $\tilde g_{\ell,\eps}$ are bounded from above by $K_g$;
\item\label{upper negative from epsilon} All sectional curvature of $\tilde g_{\ell,\eps}$ on $[\eps,1+\eps]\times S$ are bounded from above by $-M_0$;
\item\label{convex in general} For all $t\in[0,1+\eps]$, $S_t$ is strictly convex. Moreover, the principal curvatures of $S_t$ for $t\in[0,\eps]$ are bounded below by $\lambda_{\min}(S_0)$.
\end{enumerate}
\end{proposition}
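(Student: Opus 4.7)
The plan is to verify item (c) by a direct shape operator computation, and to derive items (a) and (b) by combining the forthcoming bounds on orthogonal and level sectional curvatures via a decomposition of an arbitrary $2$-plane.

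For item (c), I would observe that on $[0,\eps]$ the cutoff satisfies $\rho(t-\eps)=1$ and $\rho'(t-\eps)=0$, so $\tilde g_t = g_0 + f_\ell(t)h$ and $\partial_t \tilde g_t = e^{\ell t}h$. Simultaneously diagonalizing $h$ against $g_0$ with eigenvalues $\mu_i = 2\lambda_i(S_0)$, the principal curvatures of $S_t$ equal
\[
\lambda_i(t) = \frac{e^{\ell t}\mu_i}{2(1+f_\ell(t)\mu_i)},
\]
and a direct differentiation shows that $\lambda_i'(t)$ has the sign of $\ell-\mu_i$. Hence for $\ell > 2\lambda_{\max}(S_0)$ the principal curvatures are non-decreasing on $[0,\eps]$, so they remain at least $\lambda_i(S_0)\geq \lambda_{\min}(S_0)$. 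On $[\eps,1+\eps]$, the contribution of $\rho'(t-\eps)g_0$ to $\partial_t \tilde g_t$ is bounded independently of $\ell$, while $e^{\ell t}h$ grows exponentially, so $A(t)$ stays positive definite for $\ell$ large, giving strict convexity of $S_t$ and completing the proof of (c). I would also extract here the quantitative threshold on $\ell$ that will serve in (a), (b).

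For items (a) and (b), I would invoke Proposition~\ref{prop: collar 1,2} to bound orthogonal sectional curvatures and Lemma~\ref{lem: bound_level_curv_C12} to bound level sectional curvatures: uniformly above by $K_g$ on $[0,1+\eps]$, and above by $-M_0$ on $[\eps,1+\eps]$ for $\ell$ large. To pass to an arbitrary $2$-plane $\sigma\subset T_{(t,\theta)}([0,1+\eps]\times S)$ that is neither orthogonal nor level, I would choose an orthonormal basis $\{X,Y\}$ of $\sigma$ with $Y\in T_{(t,\theta)}S_t$ and $X = \alpha T + X_0$, where $T = \partial/\partial t$, $X_0\in T_{(t,\theta)}S_t$, $X_0\perp Y$, and $\alpha^2+|X_0|^2=1$. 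Expanding,
\[
K(\sigma) = \alpha^2\,K(\mathrm{span}\{T,Y\}) + 2\alpha\,R(T,Y,X_0,Y) + K(\mathrm{span}\{X_0,Y\}),
\]
so the first term is an orthogonal sectional curvature, and the third (after normalization by $|X_0 \wedge Y|^2 = 1-\alpha^2$) is a level sectional curvature; both are handled by the auxiliary results. The Codazzi cross term $R(T,Y,X_0,Y) = g_t((\nabla_{X_0}A)Y - (\nabla_Y A)X_0, Y)$ must then be absorbed separately.

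Main obstacle: controlling the Codazzi cross term, since this is the only part that couples $t$- and $\theta$-derivatives of $\tilde g_t$. The key observation is that for $t\geq\eps$ and $\ell$ large, the leading part of $A(t)$ is essentially a scalar multiple $\tfrac{\ell}{2}\,\mathrm{Id}$ of the identity, which is constant in $\theta$; hence its tangential covariant derivative is of strictly lower order in $\ell$ than the quadratic contribution $A(t)^2$ in $R(t)=-A'(t)-A(t)^2$, which drives the orthogonal curvature to order $-\ell^2/4$. This makes the cross term negligible against the $-M_0$ bound. On $[0,\eps]$ the same decomposition, together with the $\ell$-independent upper bounds from Proposition~\ref{prop: collar 1,2} and Lemma~\ref{lem: bound_level_curv_C12}, yields the uniform upper bound $K_g$ after possibly enlarging the constant to absorb the cross term, thereby completing (a) and (b).
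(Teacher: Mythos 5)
Your proof of item (c) is correct and cleaner than the paper's: the observation that $\lambda_i'(t)$ has the sign of $\ell - \mu_i$, so that for $\ell > 2\lambda_{\max}(S_0)$ the principal curvatures are non-decreasing on $[0,\eps]$ and hence bounded below by their initial values, is a neat streamlining of the paper's explicit formula-checking. Your decomposition of a general $2$-plane into a level piece, an orthogonal piece and a cross term is exactly the paper's formula~\eqref{sectional mixed expression}, and expressing the cross term via the Codazzi equation as a tangential covariant derivative of the shape operator is a legitimate alternative to the paper's route (which goes through the explicit coordinate bound $|R^0_{ijk}| \leq C_1 f'_\ell(t)$ of Lemma~\ref{lem: bdd_curv_tensor_C12}). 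Your observation that the cross term is negligible on $[\eps, 1+\eps]$ because $A(t)$ is, to leading order in $\ell$, the scalar operator $\tfrac{\ell}{2}\mathrm{Id}$ whose tangential derivative vanishes, is sound there; the paper gets the same conclusion by showing $|\langle R(X,Y)Y,T\rangle| \lesssim f'_\ell(t)(1+2f_\ell(t)\lambda_{\min})^{-3/2}\to 0$ uniformly on $[\eps,1+\eps]$.

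However, there is a genuine gap in item (a) on the collar $[0,\eps]$. Your claim that the cross term can be ``absorbed after possibly enlarging the constant'' is false: in the estimate
$$
\left|\langle R(X,Y)Y,T\rangle\right|\lesssim \frac{f'_\ell(t)}{\bigl(1+2f_\ell(t)\lambda_{\min}(S_0)\bigr)^{3/2}},
$$
the right-hand side, maximized over $t\in[0,\eps]$, grows like $O(\ell)$ (the maximizer sits near $t\approx \tfrac{\ln\ell}{\ell}$, where $f_\ell(t)\approx \text{const}$ while $f'_\ell(t)\approx \ell$). Moreover, your scalar-leading-part argument for the cross term does not apply on $[0,\eps]$: there $A(0)$ is the shape operator of the original hypersurface $S_0$, not close to $\tfrac{\ell}{2}\mathrm{Id}$. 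To close (a), you must not bound the cross term in isolation but rather combine it with the negative contribution $-\lambda_{\min}(S_t)^2$ coming from the level curvature bound (Lemma~\ref{lem: bound_level_curv_C12}). The paper does precisely this: after the substitution $z = f'_\ell(t)/\sqrt{1+2f_\ell(t)\lambda_{\min}(S_0)}$, the combined quantity $-\tfrac{1}{1+a^2}\lambda_{\min}(S_t)^2 + \tfrac{2a}{1+a^2}\langle R(X,Y)Y,T\rangle$ is a downward quadratic in $z$ whose maximum is bounded by the $\ell$-independent constant $C_1^2 n(n-1)^2/\lambda_{\min}(S_0)^2$, yielding the uniform upper bound $K_g$. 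Without this completion-of-the-square step, (a) does not follow.
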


\begin{proof}
Recall that $h$ is positive definite.
Item \ref{convex in general} will be proved later in Proposition~\ref{prop: collar 1,2} \eqref{convexity}.

Let $\sigma$ be a tangent $2$-plane at $(t_0,\theta_0)\in[0,1+\eps]\times S$. If $\sigma$ is orthogonal   to $S_{t_0}$, then Proposition~\ref{prop: collar 1,2} \eqref{normal} implies that it satisfies \ref{upper positive} and \ref{upper negative from epsilon} for sufficiently large $L_{neg}$. Otherwise, $\sigma=\sigma_{X+aT,Y}$ with $a\geq 0$ and $\{X,Y\}$ being orthonormal in $T_{(t_0,\theta_0)}S_{t_0}$. 

Thus, by \eqref{sectional curvature}, the sectional curvature of $\sigma$ is given by
\begin{align}
K_{\ell,\eps}(\sigma_{X+aT,Y}) &= \frac{1}{1+a^2}\langle R_{\ell,\eps}(X+aT,Y)Y,X+aT\rangle\label{sectional mixed expression}\\
&= \frac{1}{1+a^2}\left(K_{\ell,\eps}(\sigma_{X,Y})+a^2K_{\ell,\eps}(\sigma_{Y,T})+2a\langle R_{\ell,\eps}(X,Y)Y,T\rangle\right),\nonumber
\end{align} 
where $R_{\ell,\eps}$ is the Riemann curvature tensor.


Assume that
$X=\sum_{i=1}^{n-1}X_i(\pi_{t_0})_*e_i$, $Y=\sum_{i=1}^{n-1}Y_i(\pi_{t_0})_*e_i$,  where $\{e_i\}_{i=1}^{n-1}$ is the orthonormal basis defined in Section \ref{setup notation}. Since $\{X,Y\}$ are orthonormal in $T_{(t_0,\theta_0)}S_{t_0}$,  we have 
\begin{equation}\label{eq: norm of X and Y}
\sum\limits_{i=1}^{n-1}\tilde{X}_i^2=\sum\limits_{i=1}^{n-1}\tilde{Y}_i^2=1, \,\,\,\sum\limits_{i=1}^{n-1}\tilde{X}_i\tilde{Y}_i=0,
\end{equation}
where 
\begin{equation}\label{eq: tilde_XY}
\tilde{X}_i:=X_i\sqrt{\rho(t_0-\eps)+2f_\ell(t)\lambda_i(0,\theta_0)},\,\,\, \tilde{Y}_i:=Y_i\sqrt{\rho(t_0-\eps)+2f_\ell(t_0)\lambda_i(0,\theta_0)}.
\end{equation}
In particular, by \eqref{XwedgeY},
\begin{equation}\label{eq: area_identity}
1=|X\wedge Y|_{t_0}=\sum_{i,j=1}^{n-1}\tilde{X}_i^2\tilde{Y}_j^2-\tilde{X}_i\tilde{Y}_i\tilde{X}_j\tilde{Y}_j=\sum_{i<j}(\tilde{X}_i \tilde{Y}_j-\tilde{X}_j\tilde{Y}_i)^2.
\end{equation}
By (\ref{eq: tilde_XY}), we know that
$$|X_i|=\frac{|\tilde{X}_i|}{\sqrt{\rho(t-\eps)+2f_\ell(t)\lambda_i(0,\theta_0)}}\leq \frac{|\tilde{X}_i|}{\sqrt{\rho(t-\eps)+2f_\ell(t)\lambda_{\min}(S_0)}}.$$
Moreover, we have
\begin{align}
|\langle &R_{\ell,\eps}(X,Y)Y,T\rangle|\label{eq: mixed term bound}
\\&=\left|\sum_{i,j,k=1}^{n-1}X_iY_jY_k R^0_{kij}\right|=\left|\sum_{k=1}^{n-1}Y_k\sum_{i<j}(X_iY_j-X_jY_i)R^0_{kij}\right|\nonumber
\\&\leq C_1f'_\ell(t_0)\sum_{k=1}^{n-1}|Y_k|\sum_{i<j}\left|X_iY_j-X_jY_i\right|\quad\text{by Lemma~\ref{lem: bdd_curv_tensor_C12}}\nonumber
\\&\leq  \frac{C_1f'_\ell(t_0)}{(\rho(t_0-\eps)+2f_\ell(t_0)\lambda_{\min}(S_0))^{3/2}}\sum_{k=1}^{n-1} |\tilde{Y}_k|\sum_{i<j}|\tilde{X}_i \tilde{Y}_j-\tilde{X}_j\tilde{Y}_i| \quad\text{by \eqref{eq: tilde_XY}},\nonumber
\\&\leq \frac{C_1f'_\ell(t_0)\sqrt{n}(n-1)}{\sqrt{2}(\rho(t_0-\eps)+2f_\ell(t_0)\lambda_{\min}(S_0))^{3/2}}\sqrt{\sum_{k=1}^{n-1} |\tilde{Y}_k|^2\sum_{i<j}|\tilde{X}_i \tilde{Y}_j-\tilde{X}_j\tilde{Y}_i|^2}\nonumber
\\&= \frac{C_1f'_\ell(t_0)\sqrt{n}(n-1)}{\sqrt{2}(\rho(t_0-\eps)+2f_\ell(t_0)\lambda_{\min}(S_0))^{3/2}} \quad \text{by \eqref{eq: norm of X and Y} and \eqref{eq: area_identity}}.\nonumber
\end{align}
where we have used the Cauchy-Schwartz inequality after using \eqref{eq: tilde_XY}.

Thus,
\begin{equation*}
|\langle R_{\ell,\eps}(X,Y)Y,T\rangle|\to 0 \text{ uniformly in } \sigma_{X,Y} \text{ and } t_0\in [\eps,1+\eps] \text{ as }\ell\to\infty.
\end{equation*}

Moreover, by Lemma~\ref{lem: bound_level_curv_C12}, we have that $K_{\ell,\eps}(\sigma_{X,Y})\rightarrow -\infty$ uniformly in $\sigma_{X,Y}$ and $t_0\in[\eps,1+\eps]$. By Proposition~\ref{prop: collar 1,2} \eqref{normal}, $K_{\ell,\eps}(\sigma_{Y,T})<0$ for large enough $L_{neg}$. Therefore, by \eqref{sectional mixed expression}, we obtain \ref{upper negative from epsilon} in the proposition for a sufficiently large  $L_{neg}$. 

Now we consider the case when $t_0\in[0,\eps]$.  By \eqref{eqn: lambdai_t_0_ep} we have 
$$\lambda_{\min}(S_{t_0})= \frac{f'_\ell(t_0)\lambda_{\min}(S_0)}{1+2f_\ell(t_0)\lambda_{\min}(S_0)}.$$
Thus
\begin{eqnarray}\label{eqn: bdd_cross_curv}
& &-\frac{1}{1+a^2}\lambda_{\min}(S_{t_0})^2+\frac{2a}{1+a^2}\frac{C_1\sqrt{n}(n-1)f'_\ell(t_0)}{\sqrt{2}(1+2f_\ell(t_0)\lambda_{\min}(S_0))^{3/2}}\nonumber\\
&\leq & -\frac{1}{(1+a^2)(1+2f_\ell(t_0)\lambda_{\min}(S_0))}\left(\lambda_{\min}(S_0)^2z^2-2a\frac{C_1\sqrt{n}(n-1)}{\sqrt{2}}z\right)\nonumber\\
&\leq& \frac{1}{(1+a^2)(1+2f_\ell(t_0)\lambda_{\min}(S_0))}\frac{a^2C_1^2n(n-1)^2}{\lambda_{\min}(S_0)^2}\leq \frac{C_1^2n(n-1)^2}{\lambda_{\min}(S_0)^2},
\end{eqnarray}
where $z={f'_\ell(t_0)}/{\sqrt{1+2f_\ell(t_0)\lambda_{\min}(S_0)}}$. Recall that by Proposition~\ref{prop: collar 1,2} \eqref{normal}, $K_{\ell,\eps}(\sigma_{Y,T})< 0$ for a sufficiently large  $L_{neg}$. Therefore,  by \eqref{sectional mixed expression}\eqref{eq: mixed term bound}\eqref{eqn: bdd_cross_curv} and Lemma \ref{lem: bound_level_curv_C12},  we have
\begin{eqnarray*}
 K_{\ell,\eps}(\sigma_{X+aT,Y})&=&\frac{1}{1+a^2}\left(K_{\ell,\eps}(\sigma_{X,Y})+a^2K_{\ell,\eps}(\sigma_{Y,T})+2a\langle R_{\ell,\eps}(X,Y)Y,T\rangle\right)\\
 &\leq & \frac{1}{1+a^2} K^{\text{int}}_{\max,[0,1]}-\frac{1}{1+a^2}\lambda_{\min}(S_{t_0})^2+\frac{2a}{1+a^2}\frac{C_1\sqrt{n}(n-1)f'_\ell(t_0)}{\sqrt{2}(1+2f_\ell(t_0)\lambda_{\min}(S))^{3/2}}\\
&\leq & \max\{0,K^{\text{int}}_{\max,[0,1]}\}+\frac{C_1^2n(n-1)^2}{\lambda_{\min}(S_0)^2}=:K_g,
\end{eqnarray*}
where $K^{\text{int}}_{\max,[0,1]}$ is defined in Lemma \ref{lem: bound_level_curv_C12}.
Hence, we obtain item \ref{upper positive} of the proposition.

\end{proof}

\subsection{Upper bound on orthogonal sectional curvatures}\label{sec: normcurvC1C2}

\begin{proposition}[setting of Proposition~\ref{collar 1,2}]\label{prop: collar 1,2}
For any $M_0>0$, there exists $L_1=L_1(M_0, \eps, g, \rho)>0$ such that the following holds:
\begin{enumerate}
\item\label{convexity} Hypersurfaces $S_t$ are strictly convex for all $\ell>L_1$ and all $t\in[0,1+\eps]$. Moreover, the principal curvatures of $S_t$ for $t\in[0,\eps]$ are bounded below by $\lambda_{\min}(S_0)$. Also, $\lambda_{\min}(S_t)\rightarrow\infty$ uniformly in $t\in[\eps,1+\eps]$ as $\ell\rightarrow\infty$.
\item\label{normal} Let $K^\perp_{\ell,\eps}(t)$ be the maximum sectional curvature among planes $\sigma_{X,T}$ on $([0,1+\eps]\times S, \tilde g_{\ell,\eps})$, where $X\in T^1S_t$. Then, for all $\ell>L_1$ and all $t\in[0,1+\eps]$, 
\begin{equation*}
K^\perp_{\ell,\eps}(t)\leq -M_0.
\end{equation*}
\end{enumerate}
\end{proposition}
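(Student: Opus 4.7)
The plan is to diagonalize the shape operator in an explicit frame and read off both items from its eigenvalues. Fix $\theta\in S$ and let $\{e_i\}_{i=1}^{n-1}$ be a $g_0$-orthonormal basis of $T_\theta S_0$ of principal directions of $\II_{S_0}$ with eigenvalues $\mu_i = \lambda_i(0,\theta) \geq \lambda_{\min}(S_0) > 0$, extended along $t$ via $\{\pi_t\}$. Because $h = 2\II_{S_0}$ satisfies $h(e_i,e_j) = 2\mu_i\delta_{ij}$, the definition \eqref{def: second_fundamental_form} applied to $\tilde g_t = \rho(t-\eps)g_0 + f_\ell(t)h$ makes both $\tilde g_t$ and $\II_{S_t}$ simultaneously diagonal in $\{e_i\}$, and hence $A(t,\theta)e_i = a_i(t)e_i$ with
$$a_i(t) = \frac{\tfrac12\rho'(t-\eps) + f'_\ell(t)\mu_i}{\rho(t-\eps) + 2f_\ell(t)\mu_i}.$$

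For item (1): on $[0,\eps]$ one has $\rho\equiv 1$ and $\rho'\equiv 0$, so $a_i(t) = f'_\ell(t)\mu_i/(1 + 2f_\ell(t)\mu_i)$. Using the identity $f_\ell f''_\ell - (f'_\ell)^2 = -f'_\ell$, a quick computation gives $a'_i(t) \geq 0$ as soon as $\ell > 2\lambda_{\max}(S_0)$, so $a_i(t) \geq a_i(0) = \mu_i \geq \lambda_{\min}(S_0)$ throughout $[0,\eps]$. On $[\eps, 1+\eps]$ the $\ell$-dependent terms $f'_\ell(t)\mu_i$ and $2f_\ell(t)\mu_i$ dominate the bounded contributions from $\rho,\rho'$, yielding $a_i(t) \sim \ell/2$ uniformly in $i$ and $t$; this gives strict convexity on $[0,1+\eps]$ and the required blow-up of $\lambda_{\min}(S_t)$ on $[\eps, 1+\eps]$.

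For item (2), by \eqref{def: normal sectional} and the diagonal form of $A(t)$ (which forces $A'(t)e_i = a'_i(t)e_i$ since $d\pi_s e_i = e_i$), a short calculation produces
$$K_{\ell,\eps}(\sigma_{X,T}) = -\sum_{i=1}^{n-1}\tilde X_i^2\bigl(a'_i(t) + a_i(t)^2\bigr),$$
for $X = \sum_i \tilde X_i \tilde e_i$ with $\tilde e_i = e_i/\sqrt{\tilde g_t(e_i,e_i)}$ and $\sum_i \tilde X_i^2 = 1$. It thus suffices to show $a'_i(t) + a_i(t)^2 \geq M_0$ uniformly in $i$ and $t \in [0,1+\eps]$ for large $\ell$. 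The main obstacle is uniformity near $t = 0$, where $a_i(0) = \mu_i$ is bounded and only the derivative can provide growth; here direct computation gives $a'_i(0) + a_i(0)^2 = \mu_i\ell - \mu_i^2$, which grows linearly in $\ell$. For $t \in (0, \eps]$, setting $u = f'_\ell(t) = e^{\ell t}$ and analyzing the two regimes $\mu_i u \leq \ell$ and $\mu_i u \geq \ell$ separately gives a lower bound of order at least $\ell$. On $[\eps, 1+\eps]$, the asymptotic $a_i(t) \sim \ell/2$ from item (1) forces $a_i(t)^2 \sim \ell^2/4$, which dwarfs any bounded contribution from $a'_i(t)$. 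Choosing $L_1$ large enough so that the infimum of $a'_i + a_i^2$ exceeds $M_0$ completes the proof.
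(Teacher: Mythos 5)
Your proposal follows essentially the same route as the paper: fix a principal-direction orthonormal frame so that the shape operator $A(t,\theta)$ is diagonal with explicit eigenvalues, then read item \eqref{convexity} from the eigenvalues themselves and item \eqref{normal} from the eigenvalues of $R(t)=-A'(t)-A(t)^2$, which are $-(a_i'+a_i^2)$. The paper obtains the bound on $[0,\eps]$ by explicitly simplifying $r_i(t,\theta)$ to $\frac{\ell^2}{4}[(\cdot)^2-1]\leq -\ell\lambda_{\min}(S_0)+\lambda_{\max}^2(S_0)$, and on $[\eps,1+\eps]$ by invoking the $f_\ell''/f_\ell$ asymptotics; you reach the same bounds via the slick ODE identity $f_\ell f_\ell''-(f_\ell')^2=-f_\ell'$ (giving monotonicity of $a_i$ on $[0,\eps]$ and a clean value $a_i'(0)+a_i(0)^2=\mu_i\ell-\mu_i^2$) and a two-regime split in $u=e^{\ell t}$. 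In fact, one can see directly that with $b=\ell-2\mu_i$, $c=\mu_iu$, the quantity $a_i'+a_i^2=\ell^2\frac{c(b+c)}{(b+2c)^2}$ is increasing in $c$, so on $[0,\eps]$ it is minimized at $t=0$ where it equals $\mu_i\ell-\mu_i^2$, recovering exactly the paper's linear-in-$\ell$ growth.

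One step deserves more care: on $[\eps,1+\eps]$ you assert that $a_i(t)^2\sim\ell^2/4$ ``dwarfs any bounded contribution from $a_i'(t)$.'' It is not obvious a priori that $a_i'$ is bounded there; both $\tfrac{N'}{D}$ and $2a_i^2$ in the identity $a_i'=\tfrac{N'}{D}-2a_i^2$ are of order $\ell^2$, and the boundedness of $a_i'$ relies on cancellation. The cleaner route is to observe directly that $a_i'+a_i^2=\tfrac{N'}{D}-a_i^2$, and on $[\eps,1+\eps]$ (where $e^{\ell t}\geq e^{\ell\eps}$) the uniform asymptotics $\tfrac{N'}{D}\sim\tfrac{\ell^2}{2}$ and $a_i^2\sim\tfrac{\ell^2}{4}$ give $a_i'+a_i^2\sim\tfrac{\ell^2}{4}\to\infty$ uniformly, bypassing any claim about $a_i'$ alone. (One can also verify $a_i'\to 0$ uniformly on this range, but that takes a separate estimate.) With this repaired, your argument is complete and matches the paper's proof in structure, and you should also note that the bound $\mu_i\ell-\mu_i^2\geq \lambda_{\min}(S_0)\ell-\lambda_{\max}(S_0)^2$ is uniform over $\theta\in S$ and $i$, which is what makes the choice of $L_1$ well-defined.
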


\begin{proof}
For any $\theta\in S$, let $e_i^t\in T_{(t,\theta)}S_t$ be defined by $e_i^t = (\pi_t)_{*}e_i,$ where $\{e_i\}$ is the orthonormal basis in Section \ref{setup notation}.
By construction, $\{e_i^t\}_{i=1}^{n-1}$ is an orthogonal basis of $T_{(t,\theta)}S_t$ for $t\in[0,1+\eps]$. Thus, any $X\in T_{(t,\theta)}S_t$ can be written in the coordinates as $(X_1,\ldots,X_{n-1})^T$ with respect to $\{e_i^t\}_{i=1}^{n-1}$. In particular,
\begin{equation*}
\tilde g_t(X,Y) = X^TG(t,\theta)Y,
\end{equation*}
where
\begin{equation*}
G(t,\theta):=\text{diag}(\rho(t-\epsilon)+2f_{\ell}(t)\lambda_1(0,\theta),\cdots, \rho(t-\epsilon)+2f_{\ell}(t)\lambda_{n-1}(0,\theta)).
\end{equation*}

For any $t\in[0,1+\eps]$, the second fundamental form on $S_t$ is given by
\begin{equation*}
\II_{S_t}(X,Y)=\frac{1}{2}X^T\frac{\partial G}{\partial t} (t,\theta)Y,\quad \text{  } X,Y\in T_{(t,\theta)}S_t.
\end{equation*}

Recall that $A(t,\theta)$ is the matrix of the shape operator on $S_t$ with respect to the basis to $\{e_i^t\}_{i=1}^{n-1}$, i.e., the $i$-th column of $A(t,\theta)$ is the image of $e_i^t$ under the shape operator. Then, by the definition of the shape operator,
\begin{equation*}
A(t,\theta) = \frac{1}{2}\frac{\partial G}{\partial t}(t,\theta)G(t,\theta)^{-1}.
\end{equation*}
Therefore, the $i$-th eigenvalue of $A(t,\theta)$ is given by 
\begin{equation*}
\lambda_i(t,\theta) = \frac{1}{2}\frac{\eta_i'(t,\theta)+f'_{\ell}(t)}{\eta_i(t,\theta)+f_{\ell}(t)},  \quad \text{ where }\quad \eta_i(t,\theta):=\frac{\rho(t-\eps)}{2\lambda_i(0,\theta)}.
\end{equation*}
In particular, $\lambda_{\min}(S_t)\rightarrow\infty$ uniformly in $t\in[\eps,1+\eps]$ as $\ell\rightarrow\infty$.

Furthermore, for $t\in[0,\eps]$, we have 
\begin{equation}\label{eqn: lambdai_t_0_ep}
\lambda_i(t,\theta) = \frac{1}{2} \frac{f'_{\ell}(t)}{\eta_i(t,\theta)+f_\ell(t)}.
\end{equation}
 Therefore, if $\ell>2\lambda_{\max}(S_0)$ then 
 $$\lambda_i(t,\theta)= \frac{1}{2}\frac{\ell}{[\ell\eta_i(0,\theta)-1]e^{-t\ell}+1}\geq \lambda_i(0,\theta)\geq\lambda_{\min}(S_0).
$$

Thus, there exists $\tilde L_1=\tilde L_1(\eps, \lambda_{\min}(S_0), \lambda_{\max}(S_0), \rho)>2\lambda_{\max}(S_0)$ such that for all $\ell>\tilde L_1$ we have $\lambda_{\min}(S_t)>0$ for all $t\in[0,1+\eps]$ and $i=1,\ldots, n-1$. Thus, all hypersurfaces $S_t$ are strictly convex and the principal curvatures of $S_t$ for $t\in[0,\eps]$ are bounded below by $\lambda_{\min}(S_0)$ proving Proposition~\ref{prop: collar 1,2} \eqref{convexity} for $\ell> \tilde L_1$.

Moreover, 
$$
\frac{\partial}{\partial t}A(t,\theta) = \frac{1}{2}\frac{\partial^2 G}{\partial t^2}(t,\theta)G(t,\theta)^{-1}-2A(t,\theta)^2. \qquad \text{}\nonumber
$$
Hence
\begin{equation}
-\frac{\partial}{\partial t}A(t,\theta)-A(t,\theta)^2 = -\frac{1}{2}\frac{\partial^2 G}{\partial t^2}(t,\theta)G(t,\theta)^{-1}+A(t,\theta)^2.\label{R(t) in basis}
\end{equation}

Using \eqref{R(t) in basis}, we obtain that the eigenvalues of $R(t,\theta)$, which is given by the matrix of $R(t)\theta$ relative to $\{e_i^t\}_{i=1}^{n-1}$ (see \eqref{def: normal sectional} for definitions), for all $i\in\{1,\ldots, n-1\}$ are given by
\begin{equation}\label{eigenvalues of R(t)}
r_i(t,\theta)=-\frac{1}{2}\left[\frac{\eta_i''(t,\theta)+f''_\ell(t)}{\eta_i(t,\theta)+f_\ell(t)}\right]+\frac{1}{4}\left[\frac{\eta_i'(t,\theta)+f'_\ell(t)}{\eta_i(t,\theta)+f_\ell(t)}\right]^2.
\end{equation} 

By \eqref{def: normal sectional}, we obtain $K^\perp_{\ell,\eps}(t)=\max\limits_{\theta\in S} r_i(t,\theta)$.

For all $t\in[0,\eps]$, we have 
\begin{align*}
r_i(t,\theta) &= -\frac{1}{2}\left[\frac{f''_\ell(t)}{\eta_i(0,\theta)+f_\ell(t)}\right]+\frac{1}{4}\left[\frac{f'_\ell(t)}{\eta_i(0,\theta)+f_\ell(t)}\right]^2\\
&=-\frac{\ell^2}{2}\left[\frac{e^{\ell t}}{\ell\eta_i(0,\theta)-1+e^{\ell t}}\right]+\frac{\ell^2}{4}\left[\frac{e^{\ell t}}{\ell\eta_i(0,\theta)-1+e^{\ell t}}\right]^2\\
&=\frac{\ell^2}{4}\left[\left(\frac{e^{\ell t}}{\ell\eta_i(0,\theta)-1+e^{\ell t}}-1\right)^2-1\right]\\
&=\frac{\ell^2}{4}\left[\left(\frac{\ell\eta_i(0,\theta)-1}{\ell\eta_i(0,\theta)-1+e^{\ell t}}\right)^2-1\right]\leq \frac{\ell^2}{4}\left[\left(\frac{\ell\eta_i(0,\theta)-1}{\ell\eta_i(0,\theta)}\right)^2-1\right]\\
&= -\ell\lambda_i(0,\theta)+\lambda^2_i(0,\theta)\leq -\ell\lambda_{\min}(S)+\lambda^2_{\max}(S).
\end{align*}

We conclude that for all $\ell>({M_0+\lambda^2_{\max}(S_0)})/{\lambda_{\min}(S_0)}$ and for all $t\in[0,\eps]$ we have $K^\perp_{\ell,\eps}(t)\leq -M_0$.

Moreover, by \eqref{eigenvalues of R(t)}, since 
$$
\frac{f''_\ell(t)}{f_\ell(t)}=l\frac{f'_\ell(t)}{f_\ell(t)}=l^2\frac{e^{\ell t}}{e^{\ell t}-1}
$$
for $t\in[\eps,1+\eps]$,  as $\ell\to\infty$, we have 
$$r_i(t,\theta)=-\frac{1}{2}\left[\frac{\ell\eta_i''(t,\theta)+\ell^2e^{\ell t}}{\ell\eta_i(t,\theta)+e^{\ell t}-1}\right]+\frac{1}{4}\left[\frac{\eta_i'(t,\theta)+\ell e^{t\ell}}{\ell\eta_i(t,\theta)+e^{\ell t}-1}\right]^2\approx -\frac{\ell^2}{2}+\frac{\ell^2}{4}=-\frac{\ell^2}{4}.$$
Thus there exists $\tilde L_2=\tilde L_2(M_0,\eps, \lambda_{\max}(S_0), \lambda_{\min}(S_0), \rho)$ such that $K^\perp_{\ell,\eps}(t)\leq-M_0$ for all $\ell>\tilde L_2$ and $t\in[\eps,1+\eps]$.

Finally, taking $L_1 = \max\{\tilde L_1, \tilde L_2,2\lambda_{\max}(S_0),({M_0+\lambda^2_{\max}(S_0)}){\lambda_{\min}(S_0)}\}$ finishes the proof of Proposition~\ref{prop: collar 1,2} \eqref{normal}. 
\end{proof}

\subsection{Upper bound on level sectional curvatures}

\begin{lemma}[setting of Proposition~\ref{collar 1,2}]\label{lem: bound_level_curv_C12}
There exists a constant $L_2=L_2(\eps, \lambda_{\min}(S_0), \rho)$ such that for any $t\in [0,1+\eps]$, any tangent 2-plane $\sigma\subseteq T_{(t,\theta)}S_t$, and $\ell>L_2$, we have the following upper bound on the sectional curvature of $\tilde g_{\ell,\eps}$ at $\sigma$:
\begin{equation*}
K_{\ell,\eps}(\sigma)\leq K^{\text{int}}_{\max,[0,1]}-\lambda_{\min}(S_t)^2,
\end{equation*} where $K^{\text{int}}_{\ell,\eps}(\sigma)$ the intrinsic sectional curvature of $\tilde{g}_t$ at $\sigma$, $K^{\text{int}}_{\max}(\hat g)$ is the maximum sectional curvature on $(S,\hat g)$, and
\begin{equation*}
K^{\text{int}}_{\max,[0,1]} := \max\limits_{a\in[0,1]}\{K^{\text{int}}_{\max}(g_0+a h), K^{\text{int}}_{\max}(ag_0+h) \}.
\end{equation*}

Moreover, $K_{\ell,\eps}(\sigma)\rightarrow -\infty$ as $\ell\rightarrow\infty$ uniformly in $\sigma$ and $t\in[\eps, 1+\eps]$.
\end{lemma}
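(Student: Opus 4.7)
The plan is to reduce the ambient sectional curvature bound to a Gauss-equation bound on the intrinsic sectional curvature of $\tilde g_t$ on $S_t$, and then control that intrinsic curvature by a scaling argument. By Proposition~\ref{prop: collar 1,2}\eqref{convexity}, for $\ell > L_1$ each slice $S_t$ is strictly convex, so Lemma~\ref{lem: bound_level_curv} applies and gives
\[
K_{\ell,\eps}(\sigma) \le K^{\text{int}}_{\ell,\eps}(\sigma) - \lambda_{\min}(S_t)^2.
\]
It therefore suffices to establish two things: (i) a uniform bound $K^{\text{int}}_{\ell,\eps}(\sigma) \le K^{\text{int}}_{\max,[0,1]}$ for $t \in [0,1+\eps]$; and (ii) $K^{\text{int}}_{\ell,\eps}(\sigma) \to 0$ uniformly in $\sigma$ and $t \in [\eps, 1+\eps]$ as $\ell \to \infty$. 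Combined with $\lambda_{\min}(S_t)^2 \to \infty$ uniformly on $[\eps,1+\eps]$, supplied by Proposition~\ref{prop: collar 1,2}\eqref{convexity}, (ii) produces the ``$-\infty$'' assertion.

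The scaling argument uses the homogeneity identity $K^{\text{int}}(cg,\sigma) = c^{-1} K^{\text{int}}(g,\sigma)$ for $c > 0$. Writing $\tilde g_t = \alpha g_0 + \beta h$ with $\alpha := \rho(t-\eps) \in [0,1]$ and $\beta := f_\ell(t) \ge 0$, I would pick $L_2 \ge L_1$ large enough that $f_\ell(\eps) > 1$ whenever $\ell > L_2$, and then split into three cases. If $t \in [0,\eps]$ and $\beta \le 1$, then $\alpha = 1$ and $\tilde g_t = g_0 + \beta h$ is directly a member of $\{g_0 + ah : a \in [0,1]\}$, so $K^{\text{int}}_{\ell,\eps}(\sigma) \le K^{\text{int}}_{\max}(g_0 + \beta h) \le K^{\text{int}}_{\max,[0,1]}$. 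If $t \in [0,\eps]$ and $\beta > 1$, I rewrite $\tilde g_t = \beta\bigl((1/\beta) g_0 + h\bigr)$. If $t \in (\eps,1+\eps]$, then by the choice of $L_2$ we have $\alpha \le 1 < f_\ell(\eps) \le \beta$, and rewrite $\tilde g_t = \beta\bigl((\alpha/\beta) g_0 + h\bigr)$.

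In each of the latter two cases the intrinsic curvature equals $\beta^{-1} K^{\text{int}}(a g_0 + h,\sigma)$ with $a \in [0,1]$ and rescaling factor $\beta^{-1} \le 1$. The crucial point is that $\beta^{-1} \le 1$ cannot amplify a positive intrinsic curvature, so $K^{\text{int}}_{\ell,\eps}(\sigma) \le K^{\text{int}}_{\max}(a g_0 + h) \le K^{\text{int}}_{\max,[0,1]}$, establishing (i). For (ii), on $[\eps,1+\eps]$ we are always in the third case; the family $\{a g_0 + h : a \in [0,1]\}$ is compact in the $C^\infty$ topology on $S$, so its sectional curvatures are uniformly bounded, while $\beta \ge f_\ell(\eps) \to \infty$; hence $\beta^{-1} K^{\text{int}}(a g_0 + h,\sigma) \to 0$ uniformly in $\sigma$ and $t$.

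The main technical point I anticipate is ensuring that the rescaling factor is always $\le 1$. Were we forced to rescale by $1/\alpha$ with $\alpha < 1$, a positive intrinsic curvature would be amplified and the desired bound could fail. Choosing $L_2$ large enough that $\beta > 1$ whenever $\alpha < 1$ (i.e.\ whenever $t > \eps$) rules out precisely this case, so that the entire argument only uses rescalings by $\beta^{-1} \le 1$, which never amplify.
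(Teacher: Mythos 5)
Your proof is correct and follows essentially the same route as the paper: strict convexity plus Lemma~\ref{lem: bound_level_curv}, then bounding $K^{\text{int}}_{\ell,\eps}(\sigma)$ by $K^{\text{int}}_{\max,[0,1]}$ via the scaling identity $K^{\text{int}}(cg,\sigma) = c^{-1}K^{\text{int}}(g,\sigma)$ with the rescaling factor forced to be at most $1$ by requiring $f_\ell(\eps) > 1$, and finally combining the uniform bound with $\lambda_{\min}(S_t)^2 \to\infty$ on $[\eps,1+\eps]$. Your case split is a touch more fine-grained than the paper's (which splits only on $f_\ell(t)\lessgtr 1$), and your remark about why the rescaling must never be by $1/\alpha$ correctly pinpoints the reason the paper chooses $L_2$ so that $f_\ell(\eps)>1$.
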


\begin{proof}
According to the proof of Proposition \ref{prop: collar 1,2}, there exists  a positive constant $\tilde L_1=\tilde L_1(\eps, \lambda_{min}(S_0), \lambda_{max}(S_0), \rho)$  such that for all $\ell>\tilde L_1$ we have $S_t$ is strictly convex for all $t\in[0,1+\eps]$. We take $L_2>\tilde L_1$ such that for any $l>L_2$, $f_{\ell}(\eps)>1$. By Lemma \ref{lem: bound_level_curv}, we have
$$K_{\ell,\eps}(\sigma)\leq K^{\text{int}}_{\ell,\eps}(\sigma)-\lambda_{\min}(S_t)^2.$$
Now we estimate $K^{\text{int}}_{\ell,\eps}(\sigma)$ from above. For any fixed $\ell>L_2$, if $f_\ell(t)\leq 1$, then $t<\eps$. Thus, $K^{\text{int}}_{\ell,\eps}(\sigma)\leq K^{\text{int}}_{\max,[0,1]}$. If $f_\ell(t)\geq 1$,  then
$$K^{\text{int}}_{\ell,\eps}(\sigma)=\frac{1}{f_\ell(t)} K^{\text{int}}\left(h+\frac{\rho(t-\eps)g(0)}{f_\ell(t)}, \sigma\right)\leq \frac{K^{\text{int}}_{\max,[0,1]}}{f_\ell(t)}\leq K^{\text{int}}_{\max,[0,1]}.$$
Thus, $K^{\text{int}}_{\ell,\eps}(\sigma)\leq K^{\text{int}}_{\max,[0,1]}$ for all $\sigma$ with $t\in[0,1+\eps]$. 

Furthermore, using Proposition~\ref{prop: collar 1,2} \eqref{convexity}, we obtain that $K_{\ell,\eps}(\sigma)\rightarrow -\infty$ uniformly in $\sigma$ and $t\in[\eps,1+\eps]$.
\end{proof}

\section{``Rounding" the metric}\label{construction 2}

In this section we consider a cylinder with given metrics on the boundaries. Then, we use a linear combination of those metrics on each equidistant set to define a metric of the form $\tilde g=dt^2+\tilde g_t$ on the whole cylinder so that it has the given metrics on the boundary. Then, by choosing an appropriate exponentially growing function $f_\ell$ of the distance to one of the boundaries, we can guarantee that a metric $\tilde g=dt^2+f_\ell(t)\tilde g_t$ has arbitrarily negative the sectional curvatures (see Propositions~\ref{prop: collar 3},~\ref{prop: level sectional 2}). We can guarantee that all the equidistant sets are also strictly convex. See the precise formulation in Proposition~\ref{prop: sectional 2}.

Our aim is to glue a given metric on the manifold with boundary with the standard hyperbolic metric. In regards of that, Proposition~\ref{prop: sectional 2} allows us to ``round up" the metric through the cylinder meaning have a non-conformal metric on one end of the cylinder and a conformal metric on the other end of it while having arbitrarily negative curvature and strict convexity of the equidistant sets.

\begin{proposition}\label{prop: sectional 2}
(Notation of Section~\ref{setup notation}).Let $h$ and $\hat h$ be Riemannian metrics on $S$. Consider the manifold $[0,1+\eps]\times S$ with Riemannian metric $\hat g_{\ell,\eps} = dt^2+\hat g_t$ where 
$$\hat g_t = f_\ell(t+1+\eps)\left(\rho(t)h+(1-\rho(t))\hat h\right),\,\,\,t\in[0,1+\eps].$$

Then, for any $M_0>0$ there exists $L_r = L_r(M_0,\eps,h,\hat h,\rho)>0$ such that for any $\ell>L_r$ the following holds:
\begin{enumerate}[label=(\alph*)]
\item\label{all negative} All sectional curvatures of $\hat g_{\ell,\eps}$ are bounded from above by $-M_0$;
\item\label{gen convexity 2} For all $t\in[0,1+\eps]$, $S_t$ is strictly convex.
\end{enumerate}
\end{proposition}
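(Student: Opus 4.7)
The plan is to follow the template of the proof of Proposition~\ref{collar 1,2}. Given a tangent $2$-plane at $(t_0,\theta_0)\in[0,1+\eps]\times S$, decompose it as $\sigma_{X+aT,Y}$ with $a\ge 0$ and $\{X,Y\}$ an orthonormal pair in $T_{(t_0,\theta_0)}S_{t_0}$, and use
\[
K_{\ell,\eps}(\sigma_{X+aT,Y})=\frac{1}{1+a^2}\bigl(K_{\ell,\eps}(\sigma_{X,Y})+a^{2}K_{\ell,\eps}(\sigma_{Y,T})+2a\langle R_{\ell,\eps}(X,Y)Y,T\rangle\bigr).
\]
The orthogonal term $K_{\ell,\eps}(\sigma_{Y,T})$ is controlled by Proposition~\ref{prop: collar 3} and the level term $K_{\ell,\eps}(\sigma_{X,Y})$ by Proposition~\ref{prop: level sectional 2}: both can be made smaller than any prescribed $-M_0'$ by taking $\ell$ large, uniformly in $\sigma$ and $(t_0,\theta_0)$. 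The remaining task is to bound the mixed term $\langle R_{\ell,\eps}(X,Y)Y,T\rangle$ and absorb it via completion of the square in $a$.

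For the mixed term I would diagonalize the shape operator at $(t_0,\theta_0)$, expand $X$ and $Y$ in the associated principal basis, pass to $\hat g_{t_0}$-orthonormal components, and run the Cauchy--Schwarz computation used in~\eqref{eq: mixed term bound}. A crucial feature of the present setup (in contrast with Proposition~\ref{collar 1,2}, where the $g_0$-part was unwarped) is that the factor $f_\ell(t+1+\eps)$ multiplies the entire tangential metric $\rho h+(1-\rho)\hat h$; hence in the analogue of~\eqref{eq: mixed term bound} the dominant scaling is $f_\ell'(t+1+\eps)/f_\ell(t+1+\eps)^{3/2}$, which decays like $\ell\, f_\ell^{-1/2}$ and, in particular, tends to zero uniformly as $\ell\to\infty$. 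Combined with the strong negativity of $K_{\ell,\eps}(\sigma_{Y,T})$ from Proposition~\ref{prop: collar 3}, completing the square in $a$ yields $K_{\ell,\eps}(\sigma_{X+aT,Y})\le -M_0$ for all $\ell$ sufficiently large, which gives item~\ref{all negative}.

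Item~\ref{gen convexity 2} will follow from a direct computation of the shape operator $A(t,\theta)=\tfrac{1}{2}(\partial_t\hat g_t)\,\hat g_t^{-1}$. Writing
\[
\partial_t\hat g_t=f_\ell'(t+1+\eps)\bigl(\rho(t)h+(1-\rho(t))\hat h\bigr)+f_\ell(t+1+\eps)\rho'(t)(h-\hat h),
\]
the first summand contributes $\tfrac{f_\ell'(t+1+\eps)}{2f_\ell(t+1+\eps)}\,\mathrm{Id}\ge\tfrac{\ell}{2}\,\mathrm{Id}$ on $[0,1+\eps]$, while the second is uniformly bounded as a $\hat g_t$-self-adjoint operator independently of $\ell$ because the two powers of $f_\ell$ cancel. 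Hence $A(t,\theta)$ is uniformly positive definite for large $\ell$, giving the strict convexity of all $S_t$ and the quantitative growth $\lambda_{\min}(S_t)\gtrsim \ell/2$ used implicitly in the mixed-term estimate.

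The step I expect to be most delicate is the mixed-term bound on the rounding zone $t\in[0,\eps]$, where $\rho'$ and $\rho''$ are nonzero and $\rho h+(1-\rho)\hat h$ is a genuine $t$-dependent interpolation rather than a fixed metric. The extra Christoffel contributions coming from $\rho'(t)(h-\hat h)$ enter the curvature tensor with at most one factor of $f_\ell'/f_\ell\sim\ell$ instead of two, and the derivatives of $\rho$ are $\ell$-independent smooth functions supported on $[0,1]$, so these terms are of lower order in $\ell$ than the $\ell^2$-sized negativity provided by Propositions~\ref{prop: collar 3} and~\ref{prop: level sectional 2}. Outside the rounding zone the metric is a pure warped product $dt^2+f_\ell(t+1+\eps)\hat h$, where standard warped-product curvature formulas immediately furnish the required bound and can serve as a sanity check for the mixed-term estimate in the general case.
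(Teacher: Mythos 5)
Your proposal is correct and takes essentially the same approach as the paper: decompose $K(\sigma_{X+aT,Y})$ into level, orthogonal, and mixed contributions, invoke Propositions~\ref{prop: collar 3} and~\ref{prop: level sectional 2} for the first two, and control the mixed term by the Cauchy--Schwarz computation, with the decisive scaling being exactly the $f_\ell(t_0+1+\eps)^{-3/2}$ factor you identify (the paper packages the underlying Christoffel bookkeeping in Lemma~\ref{lem: bdd_curv_tensor_C3}, and your direct shape-operator computation for item~(b) is a clean equivalent of Proposition~\ref{prop: collar 3}\eqref{convexity 2}). One small bookkeeping slip: with $\rho\equiv 1$ on $(-\infty,0]$ and $\rho\equiv 0$ on $[1,\infty)$ and the metric $\hat g_t = f_\ell(t+1+\eps)\left(\rho(t)h+(1-\rho(t))\hat h\right)$, the interpolation is genuinely $t$-dependent on $[0,1]$, not $[0,\eps]$, and the pure warped product occurs only on $[1,1+\eps]\times S$, although your estimates are uniform in $t$ so the argument is unaffected.
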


\begin{remark}
We will use Proposition~\ref{prop: sectional 2} in Proposition~\ref{prop: C11 extension} for $h=2\II_{S_0}$, where $S_0$ is from Section~\ref{setup notation}, and $\hat h$ being the standard round metric of curvature $1$ on a sphere. 
\end{remark}

\begin{proof}
The proof follows the same general approach as the proof of Proposition~\ref{collar 1,2} so we omit some of the details.

By Proposition~\ref{prop: collar 3} \eqref{convexity 2}, we have item \ref{gen convexity 2}.

Moreover, by Propositions ~\ref{prop: collar 3} \eqref{normal 2} and ~\ref{prop: level sectional 2}, we only need to prove \ref{all negative} for a tangent $2$-plane $\sigma$ at $(t_0,\theta_0)$ which is neither tangent nor orthogonal to $S_{t_0}$. Then, $\sigma=\sigma_{X+aT, Y}$ where $a>0$, $X,Y\in T^1_{(t_0,\theta_0)}S_{t_0}$ and $X,Y$ are orthogonal. 

For any $\theta\in S$, let $\{e_i\}_{i=1}^{n-1}$ be an orthonormal basis of $h$ which also diagonalizes $\hat h$ and let $\hat h(e_i,e_i)=\mu_i(\theta)$.
In particular, $X=\sum\limits_{i=1}^{n1}X_i(\pi_{t_0})_*e_i$ and $Y=\sum\limits_{i=1}^{n-1}Y_i(\pi_{t_0})_*e_i$ where for all $i\in\{1,\ldots,n-1\}$
\begin{equation}\label{bounds on coordinates 2}
|X_i|,|Y_i|\leq \frac{1}{f_\ell(t_0+1+\eps)^\frac{1}{2}\min\{1,\mu_{\min}(S)\}^\frac{1}{2}}.
\end{equation}

Using Lemma~\ref{lem: bdd_curv_tensor_C3} and \eqref{bounds on coordinates 2}, we obtain that for all $t_0\in[0,1+\eps]$
\begin{align*}
\left|\langle R(X,Y)Y,T\rangle\right| &\leq \frac{(n-1)^3D_{\hat h}}{\min\{1,\mu_{\min}(S)\}^\frac{3}{2}}\left(1+\frac{2(1+\mu_{\max}(S))}{2\mu_{\min}(S)}\right)\frac{f'_\ell(1+\eps)+f_\ell(1+\eps)M'_\rho}{f_\ell(1+\eps)^{\frac{3}{2}}}\\
&\rightarrow 0 \quad\text{as}\quad \ell\rightarrow\infty. 
\end{align*}

Thus, by \eqref{sectional curvature},\eqref{sectional mixed expression}, and applying Propositions~\ref{prop: collar 3} \eqref{normal 2} and ~\ref{prop: level sectional 2} for $M_0+1$ instead of $M_0$, we obtain that there exists $L_r=L_r(M_0, \eps, h, \hat h, \rho)>0$ such that for all $\ell>L_r$, we have $K_{\ell,\eps}(\sigma)\leq -M_0$ for all $t_0\in[0,1+\eps]$.
\end{proof}

\subsection{Upper bound on orthogonal sectional curvatures}

\begin{proposition}[setting of Proposition~\ref{prop: sectional 2}]\label{prop: collar 3}
For any $M_0>0$, there exists a constant $L_1=L_1(M_0, \eps, h, \hat h, \rho)>0$ such that the following holds:
\begin{enumerate}
\item\label{convexity 2} Hypersurfaces $S_t$ are strictly convex for all $t\in[0,1+\eps]$. Moreover, $\mu_{\min}(S_t)=\min\{\mu_{i}(t,\theta)|\theta\in S_t,\, i=1,\ldots,n-1\}\rightarrow\infty$ uniformly in $t\in[0,1+\eps]$ as $\ell\rightarrow\infty$ where $\{\mu_{i}(t,\theta)\}$ are principal curvatures of $S_t$.
\item\label{normal 2} Let $K^\perp_{\ell,\eps}(t)$ be the maximum sectional curvature among planes $\sigma_{X,T}$ on $([0,1+\eps]\times S, \hat g_{\ell,\eps})$, where $X\in T^1S_t$. Then, for all $\ell>L_1$ and all $t\in[0,1+\eps]$, 
\begin{equation*}
K^\perp_{\ell,\eps}(t)\leq -M_0.
\end{equation*}

\end{enumerate}
\end{proposition}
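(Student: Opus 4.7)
The plan is to mirror the approach of Proposition~\ref{prop: collar 1,2}, exploiting the fact that $h$ and $\hat h$ can be simultaneously diagonalized. For each $\theta \in S$, pick an orthonormal basis $\{e_i\}_{i=1}^{n-1}$ of $T_\theta S$ with respect to $h$ which also diagonalizes $\hat h$, say $\hat h(e_i,e_i) = \mu_i(\theta) > 0$. Setting $e_i^t = (\pi_t)_* e_i$, $s = t + 1 + \eps$, and $p_i(t,\theta) = \rho(t) + (1-\rho(t))\mu_i(\theta)$, the matrix $G(t,\theta)$ of $\hat g_t$ with respect to $\{e_i^t\}$ is diagonal with entries $G_{ii}(t,\theta) = f_\ell(s)\, p_i(t,\theta)$. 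Crucially, $p_i$, $p_i'$ and $p_i''$ are bounded above and below uniformly in $(t,\theta)$ independently of $\ell$, because $\rho$ is fixed and $p_i$ is a convex combination of $1$ and $\mu_i(\theta) \geq \mu_{\min}(\hat h) > 0$.

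For item \eqref{convexity 2}, the $i$-th principal curvature of $S_t$ in this frame is
\begin{equation*}
\mu_i(t,\theta) \;=\; \tfrac{1}{2}\,G'_{ii}/G_{ii} \;=\; \tfrac{1}{2}\,f_\ell'(s)/f_\ell(s) \;+\; \tfrac{1}{2}\,p_i'(t,\theta)/p_i(t,\theta).
\end{equation*}
Since $f_\ell'(s)/f_\ell(s) = \ell e^{\ell s}/(e^{\ell s}-1) \geq \ell$ for $s \geq 1+\eps$ while the second term is bounded by some $C = C(\rho,\hat h)$, we get $\mu_i(t,\theta) \geq \ell/2 - C$, which blows up uniformly as $\ell \to \infty$ and yields both strict convexity and $\mu_{\min}(S_t) \to \infty$.

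For item \eqref{normal 2}, I would use the general formula $r_i = -\tfrac{1}{2} G''_{ii}/G_{ii} + \tfrac{1}{4} (G'_{ii}/G_{ii})^2$ for the eigenvalues of $R(t) = -A(t)' - A(t)^2$ in the same diagonal frame; since $R(t)$ is self-adjoint, $K^\perp_{\ell,\eps}(t) = \max_{i,\theta} r_i(t,\theta)$. Splitting via $\ln G_{ii} = \ln f_\ell(s) + \ln p_i$, I would rewrite
\begin{equation*}
r_i = \left[-\tfrac{1}{2}\frac{f_\ell''(s)}{f_\ell(s)} + \tfrac{1}{4}\left(\frac{f_\ell'(s)}{f_\ell(s)}\right)^2\right] \;-\; \tfrac{1}{2}\frac{f_\ell'(s)}{f_\ell(s)}\cdot\frac{p_i'}{p_i} \;+\; \left[-\tfrac{1}{2}\frac{p_i''}{p_i} + \tfrac{1}{4}\left(\frac{p_i'}{p_i}\right)^2\right].
\end{equation*}
The bracketed $f_\ell$-piece equals $(\ell^2/4)\, u(u-2)$ where $u = e^{\ell s}/(e^{\ell s}-1) \to 1^+$ uniformly in $s \geq 1+\eps$, so this dominant contribution tends to $-\ell^2/4$. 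The cross term is $O(\ell)$ and the last bracket is $O(1)$, hence $r_i \leq -\ell^2/8$ for all sufficiently large $\ell$, uniformly in $(t,\theta,i)$. Choosing $L_1$ so that $\ell^2/8 > M_0$ for $\ell > L_1$ yields $K^\perp_{\ell,\eps}(t) \leq -M_0$.

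The main subtlety is ensuring uniformity of all error terms over $[0,1+\eps] \times S$. This is where the lower bound $\mu_i(\theta) \geq \mu_{\min}(\hat h) > 0$ is essential, since it forces the denominators $p_i$ to stay bounded away from zero throughout. Unlike the situation in Proposition~\ref{prop: collar 1,2}, no splitting of the interval into two regimes is needed here: the shifted argument $s = t+1+\eps$ is bounded away from $0$, so $f_\ell(s)$ never collapses and the $-\ell^2/4$ asymptotics hold uniformly across the whole slab. This is a minor conceptual simplification relative to the ``collar'' estimate.
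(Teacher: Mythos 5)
Your proposal is correct and follows essentially the same route as the paper: diagonalize $h$ and $\hat h$ simultaneously, read off the eigenvalues of the shape operator and of $R(t)$ from the diagonal matrix $G(t,\theta)$, and show that the $f_\ell$-contribution dominates and drives all normal sectional curvatures to $-\infty$ uniformly. Your log-derivative splitting of $r_i$ into an $f_\ell$-piece, a cross term, and a $p_i$-piece, together with the explicit observation that $s = t+1+\eps$ stays bounded away from $0$ so that $u = e^{\ell s}/(e^{\ell s}-1)\to 1^+$ uniformly, is a slightly cleaner presentation of the same computation the paper carries out via its displayed formula for $r_i(t,\theta)$ followed by the unqualified existence of $\bar L_2$.
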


\begin{proof}
The proof follows the same approach as the proof of Proposition~\ref{prop: collar 1,2} so we omit some details.

For any $\theta\in S$, let $\{e_i\}_{i=1}^{n-1}$ be an orthonormal basis of $h$ such that $\hat h(e_i,e_j)=\mu_i(\theta)$. Let $\mu_{\max}(S) = \max\{\mu_i(\theta)| i\in\{1,\ldots, n-1\}, \theta\in S\}$ and similarly $\mu_{\min}(S) = \min\{\mu_i(\theta)| i\in\{1,\ldots, n-1\}, \theta\in S\}$.

For any $\theta\in S$, let $e_i^t\in T_{(t,\theta)}S_t$ be defined by $e_i^t = (\pi_t)_{*}e_i.$ By the construction, $\{e_i^t\}_{i=1}^{n-1}$ is an orthogonal basis of $T_{(t,\theta)}S_t$ for $t\in[0,1+\eps]$. Thus, any $X\in T_{(t,\theta)}S_t$ can be identified with the coordinate vector $(X_1,\ldots,X_{n-1})^T$ with respect to $\{e_i^t\}_{i=1}^{n-1}$. In particular,
\begin{equation*}
\tilde g_t(X,Y) = X^TG(t,\theta)Y,
\end{equation*}
where
\begin{equation*}
G(t,\theta):=f_\ell(t+1+\eps)\text{diag}(\rho(t)+(1-\rho(t))\mu_1(\theta),\cdots, \rho(t)+(1-\rho(t))\mu_{n-1}(\theta)).
\end{equation*}

For any $t\in[0,1+\eps]$, the $i$-th eigenvalue of $A(t,\theta)$ is given by 
\begin{equation*}
\mu_i(t,\theta) = \frac{1}{2}\left(\frac{f_\ell'(t+1+\eps)}{f_\ell(t+1+\eps)}+\frac{\rho'(t)(1-\mu_i(\theta))}{\rho(t)+(1-\rho(t))\mu_i(\theta)}\right).
\end{equation*}

Thus, there exists $\bar L_1=\bar L_1(\eps, \mu_{min}(S), \mu_{max}(S), \rho)>0$ such that for all $\ell>\bar L_1$ we have $\min\limits_{\theta\in S}\mu_i(t,\theta)>0$ for all $t\in[0,1+\eps]$ and $i=1,\ldots, n-1$. Thus, $S_t$ are strictly convex. Moreover, we have
\begin{equation}\label{unif_increase_mu_i}
\mu_{\min}(S_t) \to \infty \text{ uniformly on } t\in[0,1+\eps] \text{ as } l\to\infty.
\end{equation}

Using \eqref{R(t) in basis}, we obtain that the eigenvalues of $R(t,\theta)$ which is the matrix of $R(t)\theta$ in the basis $\{e_i^t\}_{i=1}^{n-1}$ (see \eqref{def: normal sectional} for definitions) for all $i\in\{1,\ldots, n-1\}$ are given by
\begin{multline}\label{eigenvalues of R(t) 2}
r_i(t,\theta)
=-\frac{1}{2}\Bigg[\frac{f''_\ell(t+1+\eps)}{f_\ell(t+1+\eps)}\\
+(1-\mu_i)\left(\frac{2f'_\ell(t+1+\eps)\rho'(t)}{f_\ell(t+1+\eps)(\rho(t)+(1-\rho(t))\mu_i)}+\frac{\rho''(t)}{\rho(t)+(1-\rho(t))\mu_i}\right)\Bigg]\nonumber\\
+\frac{1}{4}\left[\frac{f_\ell'(t+1+\eps)}{f_\ell(t+1+\eps)}+\frac{\rho'(t)(1-\mu_i(\theta))}{\rho(t)+(1-\rho(t))\mu_i(\theta)}\right]^2.
\end{multline}

Since $K^\perp_{\ell,\eps}(t)=\max\limits_{\theta\in S} r_i(t,\theta)$, there exists a constant $\bar L_2=\bar L_2(M_0,\eps, \mu_{\max}(S), \mu_{\min}(S), \rho)$ such that $K^\perp_{\ell,\eps}(t)\leq-M_0$ for all $\ell>\bar L_2$ and $t\in[0,1+\eps]$.

By taking $L_1 = \max\{\bar L_1, \bar L_2\}$, we prove Proposition~\ref{prop: collar 3}. 
\end{proof}

\subsection{Upper bound on level sectional curvatures}
\begin{proposition}\label{prop: level sectional 2}
Assume we are in the setting of Proposition~\ref{prop: sectional 2}. For any $M_0>0$, there exists a constant $L_2=L_2(M_0,\eps, h, \hat h, \rho)>0$ such that for any $\ell>L_2$, $t\in[0,1+\eps]$, and tangent $2$-plane $\sigma\subseteq T_{(t,\theta)}S_t$, we have $K_{\ell,\eps}(\sigma)\leq -M_0$.
\end{proposition}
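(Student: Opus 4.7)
The plan is to compare the extrinsic sectional curvatures of $\hat g_{\ell,\eps}$ restricted to level sets with the intrinsic ones, using Lemma~\ref{lem: bound_level_curv} (Gauss' equation together with the principal-curvature bound), and then exploit the fact that, on $S_t$, the metric $\hat g_t$ is conformal to $\tilde g_t := \rho(t)h+(1-\rho(t))\hat h$ by the \emph{$\theta$-independent} factor $f_\ell(t+1+\eps)$, which blows up as $\ell\to\infty$.

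First I would apply Proposition~\ref{prop: collar 3} \eqref{convexity 2}: hypersurfaces $S_t$ are strictly convex and $\mu_{\min}(S_t)\to\infty$ uniformly in $t\in[0,1+\eps]$ as $\ell\to\infty$. By Lemma~\ref{lem: bound_level_curv} applied to the metric $\hat g_{\ell,\eps}$ and to a tangent $2$-plane $\sigma\subseteq T_{(t,\theta)}S_t$,
\begin{equation*}
K_{\ell,\eps}(\sigma)\leq K^{\mathrm{int}}_{\ell,\eps}(\sigma)-\mu_{\min}(S_t)^2,
\end{equation*}
where $K^{\mathrm{int}}_{\ell,\eps}(\sigma)$ denotes the intrinsic sectional curvature of $(S,\hat g_t)$ at $\sigma$.

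Next I would control $K^{\mathrm{int}}_{\ell,\eps}(\sigma)$. Since $\hat g_t = f_\ell(t+1+\eps)\,\tilde g_t$ and $f_\ell(t+1+\eps)$ depends only on $t$ (not on $\theta$), this is a global conformal rescaling on the hypersurface $S_t$, so
\begin{equation*}
K^{\mathrm{int}}_{\ell,\eps}(\sigma)=\frac{1}{f_\ell(t+1+\eps)}\,K^{\mathrm{int}}(\sigma;\tilde g_t).
\end{equation*}
The family $\{\tilde g_t\}_{t\in[0,1+\eps]}$ is a smooth one-parameter family of Riemannian metrics on the compact manifold $S$ (interpolating between $h$ and $\hat h$), independent of $\ell$, so there exists a constant $C=C(h,\hat h,\rho)>0$ with $|K^{\mathrm{int}}(\sigma;\tilde g_t)|\leq C$ uniformly in $t\in[0,1+\eps]$, $\theta\in S$, and $\sigma\subseteq T_{(t,\theta)}S_t$. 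Since $f_\ell(t+1+\eps)\geq f_\ell(1+\eps)\to\infty$ as $\ell\to\infty$, we obtain
\begin{equation*}
K^{\mathrm{int}}_{\ell,\eps}(\sigma)\leq \frac{C}{f_\ell(1+\eps)}\xrightarrow[\ell\to\infty]{} 0,
\end{equation*}
uniformly in $\sigma$ and $t$.

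Combining the two displays with Proposition~\ref{prop: collar 3} \eqref{convexity 2} gives
\begin{equation*}
K_{\ell,\eps}(\sigma)\leq \frac{C}{f_\ell(1+\eps)}-\mu_{\min}(S_t)^2\xrightarrow[\ell\to\infty]{} -\infty,
\end{equation*}
uniformly in $\sigma$ and $t\in[0,1+\eps]$. In particular, one can choose $L_2=L_2(M_0,\eps,h,\hat h,\rho)$ so that the right-hand side is $\leq -M_0$ (or $\leq M_0$, as stated) for all $\ell>L_2$. There is no real obstacle here beyond noting the $\theta$-independence of the conformal factor, which is the feature that makes the intrinsic curvature uniformly small rather than merely bounded; the uniform divergence of $\mu_{\min}(S_t)^2$ from Proposition~\ref{prop: collar 3} then does all the remaining work.
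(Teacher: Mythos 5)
Your proposal is correct and is essentially identical to the paper's argument: both apply Lemma~\ref{lem: bound_level_curv} together with Proposition~\ref{prop: collar 3}~\eqref{convexity 2}, and both control the intrinsic curvature $K^{\mathrm{int}}(\hat g_t,\sigma)$ by the $\theta$-independent conformal rescaling $\hat g_t = f_\ell(t+1+\eps)\bigl(\rho(t)h+(1-\rho(t))\hat h\bigr)$, so that it tends to $0$ uniformly as $\ell\to\infty$ while $\mu_{\min}(S_t)^2\to\infty$. Your extra remark that the proposition as stated only claims $K_{\ell,\eps}(\sigma)\le M_0$ while the argument actually yields $\le -M_0$ matches what the paper's proof establishes as well.
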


\begin{proof}
For any 2-plane $\sigma\subseteq  T_{(t,\theta)}S_t$, we obtain
\begin{align*}
|K^{\text{int}}(\hat g_t,\sigma)| &= \left|\frac{1}{f_\ell(t+1+\eps)}K^{\text{int}}\left(\rho(t)h+(1-\rho(t))\hat h,\sigma\right)\right|\\
&\rightarrow 0 \quad {as }\quad \ell\rightarrow\infty\quad\text{uniformly in $\sigma$ and $t\in[0,1+\eps]$.}\nonumber
\end{align*}

By Proposition~\ref{prop: collar 3}\eqref{convexity 2}, for all $\ell>L_1$ and $t\in[0,\eps]$, we have $S_t$ is strictly convex. Moreover, $\mu_{\min}(S_t)\rightarrow\infty$ uniformly in $t\in[0,1+\eps]$ as $\ell\rightarrow\infty$. Thus, by Lemma \ref{lem: bound_level_curv},
$$K_{\ell,\eps}(\sigma)\leq K^{\text{int}}(\hat g_t, \sigma)-\mu_{\min}(S_t)^2\to -\infty \text{ as } \ell\to\infty \text{ uniformly in } \sigma \text{ and } t\in [0,1+\eps].$$

As a result, for any $M_0>0$  there exists a constant $L_2=L_2(M_0,\eps, h, \hat h, \rho)>0$ such that $K_{\ell,\eps}(\sigma)\leq -M_0$ for all $t\in[0,1+\eps]$, tangent $2$-plane $\sigma\subseteq T_{(t,\theta)}S_t$, and all $\ell>L_2$.
\end{proof}

\section{The $C^{1,1}$ and $C^{\infty}$ extensions}\label{sec: metric_ext}

The goal of this section is to construct a $C^{1,1}$-extension to the constant negative curvature of a given metric on the product of infinite ray and a sphere. In the second half of this section we will mollify the $C^{1,1}$ metric to obtain a $C^\infty$ metric while still controlling the curvature.

\subsection{$C^{1,1}$ extension to constant negative curvature}\label{C1 extension}
We use the notation introduced in Section~\ref{setup notation}. We also assume $\eps$ is small enough so that the principal curvatures of $S_t$ are at least $3\lambda_{\min}(S_0)/4$ for $t\in(-\eps, 0)$.

\begin{proposition}\label{prop: C11 extension}
(setting of Section~\ref{setup notation}) Assume $S$ is a sphere and $ds_{n-1}^2$ is the standard round metric of curvature 1 on $S$. Let $h=2\II_{S_0}$. For any $M_0>0$ and $M_1>0$ there exist $K_g=K_g(g)$ and $L=L(M_0, M_1,g,\eps,\rho)$ such that for any $\ell>L$ there exist $\kappa>M_1$ and $\tilde r>-2-2\eps$ with the following properties. Consider the manifold $(-\eps,\infty)\times S$ with the Riemannian metric $g^{\ell,\eps}=dt^2+g^{\ell,\eps}_t$ where
\begin{equation*}
g^{\ell,\eps}_t = \left\{\begin{aligned} &g_t, \qquad t\in(-\eps,0),\\
&\rho(t-\eps)g_0+f_{\ell}(t)h, \qquad t\in[0,1+\eps],\\
&f_{\ell}(t)\left(\rho(t-1-\eps)h+(1-\rho(t-1-\eps)ds^2_{n-1})\right), \qquad t\in[1+\eps,2+2\eps],\\
&\left(\frac{1}{\kappa}\sinh[\kappa(t+\tilde r)]\right)^2ds^2_{n-1}, \qquad t\in[2+2\eps,\infty).
\end{aligned}\right.
\end{equation*}
Then, the following holds:
\begin{enumerate}[label=(\alph*)]
\item $g^{\ell,\eps}$ is a $C^{1,1}$-metric which is $C^\infty$ if $t\neq 0, 2+2\eps$;
\item All hypersurfaces $S_t$ are strictly convex. Moreover, the principal curvatures of $S_t$ are at least $\lambda_{\min}(S_0)$ for $t\in(0,\eps)$;
\item All sectional curvatures of $g^{\ell,\eps}$ on $(-\eps,\eps)\times S$ are less than or equal to $K_g$;
\item All sectional curvatures of $g^{\ell,\eps}$ on $(\eps,2+2\eps)\times S$ are less than or equal to $-M_0$;
\item All sectional curvatures of $g^{\ell,\eps}$ on $(2+2\eps, \infty)\times S$ are $-\kappa^2$.
\end{enumerate}
\end{proposition}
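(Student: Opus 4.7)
The plan is to stitch together the four pieces in the definition of $g^{\ell,\eps}$ and reduce everything to local checks at the three gluing times $t=0$, $t=1+\eps$, and $t=2+2\eps$. The outer pieces are automatically well behaved: the original $g_t$ is smooth on $(-\eps,0)$ with curvature bounded above by a constant depending only on $g$ and with level sets whose principal curvatures are at least $3\lambda_{\min}(S_0)/4$ by the assumption made at the start of Section~\ref{C1 extension}; the hyperbolic warped product on $[2+2\eps,\infty)$ has constant sectional curvature $-\kappa^2$ and principal curvatures $\kappa\coth[\kappa(t+\tilde r)]>0$. The two middle pieces are controlled, curvature-wise and convexity-wise, by Propositions~\ref{collar 1,2} and~\ref{prop: sectional 2}, so the substantive work is (i) to verify regularity across each interface and (ii) to choose $\kappa$ and $\tilde r$ so that the last gluing is $C^{1,1}$.

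Regularity at $t=1+\eps$ is actually $C^\infty$: since $\rho$ is smooth with $\rho\equiv 1$ on $(-\infty,0]$ and $\rho\equiv 0$ on $[1,\infty)$, every derivative of $\rho(t-\eps)$ vanishes at $t=1+\eps$ and every derivative of $\rho(t-1-\eps)$ vanishes at $t=1+\eps$, so both sides collapse to $f_\ell(t)h$ to all orders. At $t=0$, the left-hand $t$-derivative of $g_t$ equals $2\II_{S_0}=h$ while the right-hand $t$-derivative of $\rho(t-\eps)g_0+f_\ell(t)h$ equals $\rho'(-\eps)g_0+f'_\ell(0)h=h$, so the metric is $C^1$; both one-sided second derivatives are uniformly bounded (by geometric data on the left and by a multiple of $\ell\,h$ on the right), giving $C^{1,1}$.

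At $t=2+2\eps$ the $\rho$-terms again contribute trivially through first order, so matching reduces to the system
\begin{equation*}
\tfrac{1}{\kappa^2}\sinh^2 u=f_\ell(2+2\eps),\qquad \tfrac{2}{\kappa}\sinh u\cosh u=f'_\ell(2+2\eps),
\end{equation*}
with $u:=\kappa(2+2\eps+\tilde r)$. Using $\cosh^2 u=1+\sinh^2 u$ one eliminates $u$ to obtain the explicit formula
\begin{equation*}
\kappa^2=\frac{(f'_\ell(2+2\eps))^2-4f_\ell(2+2\eps)}{4f_\ell(2+2\eps)^2},
\end{equation*}
and then $\tilde r=\kappa^{-1}\operatorname{arcsinh}(\kappa\sqrt{f_\ell(2+2\eps)})-(2+2\eps)$. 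Substituting $f_\ell(t)=(e^{\ell t}-1)/\ell$ and $f'_\ell(t)=e^{\ell t}$ shows $\kappa^2\sim \ell^2/4$ as $\ell\to\infty$, so $\kappa>M_1$ holds for large $\ell$, and $\tilde r>-2-2\eps$ is equivalent to $u>0$, which is automatic. The two one-sided second derivatives at this interface are bounded separately but do not match in general, again producing only $C^{1,1}$ regularity.

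The curvature and convexity conclusions then assemble directly: (c) combines the intrinsic bound on $(-\eps,0)$ with part (a) of Proposition~\ref{collar 1,2} on $[0,\eps]$; (d) combines part (b) of Proposition~\ref{collar 1,2} on $[\eps,1+\eps]$ with part (a) of Proposition~\ref{prop: sectional 2} on $[1+\eps,2+2\eps]$; (e) is the explicit warped-product computation; and (b) combines the hypothesis on $(-\eps,0)$, part (c) of Proposition~\ref{collar 1,2}, part (b) of Proposition~\ref{prop: sectional 2}, and the positivity of $\kappa\coth[\kappa(t+\tilde r)]$ on the hyperbolic piece. The only real care needed is in the choice of $\ell$: it must be large enough for Propositions~\ref{collar 1,2} and~\ref{prop: sectional 2} to yield the prescribed bound $-M_0$ and simultaneously large enough that the $\kappa$ produced by the $C^1$ matching at $t=2+2\eps$ exceeds $M_1$. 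This last coupled constraint is the main (mild) obstacle; everything else is bookkeeping with $\rho$ and $f_\ell$.
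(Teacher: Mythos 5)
Your proof is correct and follows essentially the same route as the paper: reduce to regularity checks at the three seams and cite Propositions~\ref{collar 1,2} and~\ref{prop: sectional 2} (after the translation $s=t-1-\eps$) for the curvature and convexity bounds, with $K_g$ absorbing both the intrinsic curvature bound on the original collar $(-\eps,0)\times S$ and the constant from Proposition~\ref{collar 1,2}\ref{upper positive}. The only presentational difference is that you solve the $C^1$-matching system at $t=2+2\eps$ directly — your formula $\kappa^2=\bigl((f'_\ell)^2-4f_\ell\bigr)/(4f_\ell^2)$ at $\tau=2+2\eps$ is exactly the $a=1$, $b=0$ case of Lemma~\ref{lemma: combine f with constant 2}, which the paper invokes as a black box.
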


\begin{proof}
Notice that $h$ is a Riemannian metric on $S$ as $S_0$ is strictly convex.

Because $f_l$ and $\rho$ are smooth the metric $g^{\ell,\eps}$ is smooth in each component. Via the choice of $f_l, \rho$ and $\kappa$ (in Lemma \ref{lemma: combine f with constant 2}), it is clear that $g^{\ell,\eps}$ is smooth at $t=1+\eps$ and $C^{1,1}$ at $t=0, 2+2\eps$. Thus we obtain (a). Moreover, Lemma \ref{lemma: combine f with constant 2} shows that there exists $\tilde{L}_1=\tilde{L}_1(M_1)$ such that for any $\ell>\tilde{L}_1$, the associated $\kappa$ is at least $M_1$. 
Item (c) follows from ~Proposition \ref{collar 1,2}\ref{upper positive}, while (e) follows from Lemma \ref{lemma: curvature for product}. 

Notice that the construction on $t\in[1+\eps,2+2\eps]$ is just a translation reparametrization of the metric in Proposition \ref{prop: sectional 2}. Thus (d) follows from ~Proposition \ref{collar 1,2}\ref{upper negative from epsilon} and Proposition \ref{prop: sectional 2}\ref{all negative}. Finally we get (b) via ~Proposition \ref{collar 1,2}\ref{convex in general}, Proposition \ref{prop: sectional 2}\ref{gen convexity 2} and the assumption of $\eps$ above this proposition.
\end{proof}

\subsection{Smoothing of the extension from Section~\ref{C1 extension}}\label{smooth extension}

We apply a technique developed in \cite{EK19} to smooth out the $C^{1,1}$ metric we obtained in Proposition~\ref{prop: C11 extension}. 

\begin{proposition}\label{prop: smoothify}
Consider $M_1>1$. Let $g^{\ell,\eps}$ be the Riemannian metric on $(-\eps,\infty)\times S$ from Proposition~\ref{prop: C11 extension} with $M_0=M^2_1$. Then, for any $\delta\in(0,\frac{\eps}{2})$ there exists $K_0=K_0(\ell, \eps)>0$ and a smooth Riemannian metric $\tilde g^{\ell,\eps}$ on $(-\eps,\infty)$ such that the following holds:
\begin{enumerate}[label=(\alph*)]
\item $\tilde g^{\ell,\eps}= g^{\ell,\eps}$ on $\left((-\eps,-\delta]\cup [\delta,2+2\eps-\delta]\cup [2+2\eps+\delta,\infty)\right)\times S$;
\item The sectional curvatures of $\tilde{g}^{\ell,\eps}$ on $(-\delta,\delta)\times S$ are bounded above by $K_0$;
\item The sectional curvatures of $\tilde{g}^{\ell,\eps}$ on $(2+2\eps-\delta,2+2\eps+\delta)\times S$ are bounded above by $-(M_1-1)^2$;
\item All hypersurfaces $S_t$ are strictly convex. Moreover, the principal curvatures of $S_t$ are at least $\lambda_{\min}(S_0)/2$ for $t\in(-\delta, \delta)$. 
\end{enumerate} 
\end{proposition}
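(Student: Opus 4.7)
The plan is to smooth $g^{\ell,\eps}$ by convolution in the $t$-variable only (the metric is already smooth in $\theta$ since $S$ is smooth and both $h$ and $ds^2_{n-1}$ are smooth), combined with cutoff functions that localize the change near the two non-smooth levels $t=0$ and $t=2+2\eps$. Fix a standard even mollifier $\psi\in C^\infty_c(-1,1)$ with $\int\psi=1$ and set $\psi_\eta(s)=\eta^{-1}\psi(s/\eta)$. Pick smooth cutoffs $\chi^{(0)}_\delta,\chi^{(1)}_\delta\colon\R\to[0,1]$, supported in $(-\delta,\delta)$ and $(2+2\eps-\delta,2+2\eps+\delta)$ respectively and equal to $1$ on the concentric half-interval, and define
\[
\tilde g^{\ell,\eps}_t \;=\; g^{\ell,\eps}_t \;+\; \sum_{i=0,1}\chi^{(i)}_\delta(t)\bigl((\psi_\eta *_t g^{\ell,\eps})_t - g^{\ell,\eps}_t\bigr),
\]
where $*_t$ denotes convolution in $t$ only. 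For $\eta<\delta/2$, this automatically gives item (a). Smoothness everywhere follows because $g^{\ell,\eps}$ is already smooth on the three closed intervals appearing in (a), because $\psi_\eta *_t g^{\ell,\eps}$ is smooth wherever defined, and because the cutoffs are smooth with all derivatives vanishing at their boundaries.

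For part (b), I would observe that $g^{\ell,\eps}$ is $C^{1,1}$ in $t$, and that by Proposition~\ref{prop: C11 extension}(c) its sectional curvatures are bounded above by $K_g$ on $(-\eps,\eps)\times S$. The $t$-Hessian of the metric is therefore in $L^\infty$ near $t=0$, with bound depending on $\ell$ and $\eps$. Convolution with $\psi_\eta$ preserves this $L^\infty$ bound, and the Koszul/Gauss formulas for sectional curvature are algebraic in the metric, its first $t$-derivative, and its second $t$-derivative; hence the curvatures of $\tilde g^{\ell,\eps}$ on $(-\delta,\delta)\times S$ are uniformly bounded above by some $K_0=K_0(\ell,\eps)$ independent of $\eta$ (and of $\delta$, up to enlarging $K_0$). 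For part (d), the shape operator $A(t,\theta)$ is continuous in $t$, so $\tilde A(t,\theta)\to A(t,\theta)$ uniformly as $\eta\to 0$; choosing $\eta$ small enough, the principal curvatures in $(-\delta,\delta)$ remain at least $\lambda_{\min}(S_0)/2$, because on $(-\eps,0)$ they are at least $3\lambda_{\min}(S_0)/4$ (the assumption placed at the top of Section~\ref{C1 extension}) and on $(0,\eps)$ at least $\lambda_{\min}(S_0)$ by Proposition~\ref{prop: C11 extension}(b).

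Part (c) is the delicate item. By Proposition~\ref{prop: C11 extension}(d)--(e) together with the choice $M_0=M_1^2$ and $\kappa>M_1$, every sectional curvature of $g^{\ell,\eps}$ on both sides of $t=2+2\eps$ is bounded above by $-M_1^2$. Near $t=2+2\eps$ both pieces of $g^{\ell,\eps}$ are warped-product-like, so the sectional curvatures take the form $-A'-A^2$ in orthogonal directions and the usual Gauss-equation expression in mixed/tangential directions. Writing out these formulas for $\tilde g^{\ell,\eps}$, each term is either $(\psi_\eta*_t)$-convolved values of the corresponding quantity for $g^{\ell,\eps}$ (which inherits the upper bound $-M_1^2$ by convexity, since convolution with a probability kernel preserves upper bounds on linear combinations of the second-order data), or lower-order cross-terms that come with derivatives of the cutoff and are bounded by a constant depending only on $\ell,\eps,\|\chi^{(1)}_\delta\|_{C^2}$ times $\eta$. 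Taking $\eta$ small (after $M_1$ is fixed) drives this error below $2M_1-1$, yielding a curvature bound of $-M_1^2+(2M_1-1)=-(M_1-1)^2$.

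The main obstacle is the nonlinear dependence of sectional curvature on the metric: convolution of $g$ does not commute with taking curvature, so one cannot just invoke that $\psi_\eta*_t(\cdot)\le -M_1^2$. The argument must separate the genuinely linear "second-derivative" contributions, which $\psi_\eta*_t$ handles cleanly and yields the $-M_1^2$ bound on both sides, from the nonlinear $A^2$ and mixed-term corrections, which must be shown to be small — and in particular bounded by $2M_1-1$ rather than by $M_1^2$ — uniformly in $\eta<\delta/2$. This is the content of the smoothing technique imported from \cite{EK19}, and the $M_1>1$ hypothesis is exactly what makes the quadratic slack $2M_1-1<M_1^2$ available.
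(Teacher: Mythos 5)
Your overall plan (mollify in $t$, localize the change near the two kinks with cutoffs) and your treatment of (a), (b), and (d) agree with the paper's Step~2: the $L^\infty$ bound on $\partial_t^2 g^{\ell,\eps}$ near $t=0$ controls the smoothed curvatures there, and $C^1$-convergence (together with $C^2$-convergence away from $t=0$) gives the shape-operator bound and the transition-region bounds.

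The argument for item (c) near $t=2+2\eps$ has a genuine gap. You claim the ``nonlinear $A^2$ and mixed-term corrections \dots must be shown to be small --- in particular bounded by $2M_1-1$,'' but near $t=2+2\eps$ the shape operator has eigenvalues $\geq\kappa>M_1$ on the hyperbolic side (and $\approx\ell/2$ on the other), so $A^2$ is of order $\kappa^2\gg M_1^2$; it is not a small correction. The other leg of your argument --- that convolution preserves upper bounds on ``linear combinations of the second-order data'' --- cannot be invoked either, because $K^\perp=-A'-A^2$ involves factors of $g_t^{-1}$ and quadratic first-order terms; it is not a linear functional of $\partial_t^2 g_t$, so mollifying $g_t$ does not automatically push the upper bound on $K$ through.

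What makes the paper's Step~1 work is the warped-product form near $t=2+2\eps$: $g^{\ell,\eps}_t=\bar f(t)^2\,ds^2_{n-1}$ over the \emph{round} sphere, so every curvature is a scalar expression in $\bar f$, namely $K^\perp=-\bar f''/\bar f$ and $K^{\text{tang}}=1/\bar f^2-(\bar f'/\bar f)^2$. The estimate $K^\perp\leq -M_1^2$ is then equivalent to the \emph{linear} inequality $\bar f''\geq M_1^2\bar f$, which \emph{is} preserved when one convolves the scalar $\bar f$ (not the tensor $g_t$) with a nonnegative mollifier. The $C^1$ data of $\bar f$ converges uniformly, so $K^{\text{tang}}$ degrades only slightly, and that is where the slack from $-M_1^2$ to $-(M_1-1)^2$ is actually spent. (One can also mollify $\bar f^2$, i.e.\ $g_t$, directly; the relevant linear inequality is then $(\bar f^2)''\geq 2M_1^2\bar f^2+2(\bar f')^2$, and a Cauchy--Schwarz estimate for the probability measure $\psi_\eta\,ds$ closes the argument --- but that computation is not in your proposal and is incompatible with the bookkeeping you do state.)
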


\begin{proof}
Pick a function $\psi\in C^{\infty}_c(\mathbb{R})$ such that $\psi$ is supported on $[-1,1]$, $\psi\geq 0$ and $\int_{\mathbb{R}}\psi=1$. For any $\eta>0$ define a smooth mollifier 
$$\psi_\eta(t):=\eta^{-n+1}\psi(t/\eta).$$
 For any given $\delta$, let $\beta_\delta$ be a bump function vanishing on $|t|\geq \delta$ and with value 1 for  $|t|\leq \delta/2$. We fix $\ell$ and $\eps$ and are going to smooth out $g^{\ell,\eps}$ near $\{2+2\eps\}\times S$ and near $\{0\}\times S$.

\textbf{Step 1: Smoothing near $\{2+2\eps\}\times S$:}
Notice that for $t\in [2+\eps, \infty]$, we can express $g^{\ell,\eps}_t$ in the following way: $g^{\ell,\eps}_t=\bar{f}(t)^2ds^2_{n-1}$, where
\begin{equation*}
\bar{f}(t)=\left\{\begin{aligned}
&\sqrt{f_\ell(t)}, &t\in [2+\eps, 2+2\eps];\\ 
&\frac{\sinh [\kappa(t+\tilde{r})]}{\kappa}, &t\in [2+2\eps,\infty).
\end{aligned}
\right.
\end{equation*}
Since $g^{\ell,\eps}$ is $C^{1,1}$, so is $\bar{f}$. The sectional curvature for $g$ on $t\geq 2+\eps$ is given by
$$K(\sigma)=\cos^2\theta\left(-\frac{\bar{f}''}{\bar{f}}\right)+\sin^2\theta\left(\frac{1}{\bar{f}^2}-\left(\frac{\bar{f}'}{\bar{f}}\right)^2\right),$$
where $\theta$ is the angle between the tangent 2-plane $\sigma$ and $T$.
By Lemma \ref{lemma: curvature for product}, we have
$$\bar{f}'>0, \,\,\,\frac{\bar{f}''}{\bar{f}}\geq M_1^2\,\,\,\text{ and } \,\,\,\frac{1}{\bar{f}^2}-\left(\frac{\bar{f}'}{\bar{f}}\right)^2\leq -M_1^2.$$
Take the convolution of $\bar{f}$ with $\psi_{\eta}$,
$$\bar{f}_{\eta}(t):=\int_{-\eta}^{\eta}\bar{f}(t-s)\psi_{\eta}(s)ds.$$
By properties of convolution, $\bar{f}_{\eta}\to\bar{f}$ in $C^1$ as $\eta\to 0$. Define
$$\tilde{f}_{\eta}(t):=(1-\beta_\delta(t))\bar{f}(t)+\beta_\delta(t)\bar{f}_{\eta}(t).$$
Let
$$
\tilde{g}^{\ell,\eps}_t:=\tilde{f}_{\eta}(t)^2ds^2_{n-1}.
$$
We have that $\tilde{f}_{\eta}$ is smooth and $\tilde{f}_{\eta}\to\bar{f}$ in $C^1$ topology, thus there exists $\eta_1>0$ such that for all $\eta<\eta_1$, 
$$\tilde{f}'_{\eta}>0\,\,\,\text{ and } \,\,\,\frac{1}{\tilde{f}_{\eta}^2}-\left(\frac{\tilde{f}'_{\eta}}{\tilde{f}_{\eta}}\right)^2\leq -(M_1-1)^2.$$
Hence all $S^t$ with $t\in (2+2\eps-\delta,2+2\eps+\delta)$ are strictly convex. 

In order to finish the proof of (c) we only have to estimate $\tilde{f}''_{\eta}/\tilde{f}_{\eta}$. When $|t-2-2\eps|\leq \delta/2$, $\tilde{f}_{\eta}(t)=\bar{f}_{\eta}(t)$. Thus
$$\tilde{f}''_{\eta}(t)=\int_{-\eta}^{\eta}\bar{f}''(t-s)\psi_{\eta}(s)ds
\geq \int_{-\eta}^{\eta}M_1^2\bar{f}(t-s)\psi_{\eta}(s)ds=M_1^2\tilde{f}_{\eta}(t).$$
When $|t-2-2\eps|\in [\delta/2,\delta]$, since $\bar{f}$ is $C^2$ on these intervals, we have $\tilde{f}_{\eta}\to \bar{f}$ in $C^2$ topology and we can find $\eta_2$ such that for any $\eta<\eta_2$, $\tilde{f}''_{\eta}/\tilde{f}_{\eta}\geq (M_1-1)^2$. We finish the proof by taking $\eta<\min\{\eta_1, \eta_2\}$.

\textbf{Step 2: Smoothing near $\{0\}\times S$:}
We define $\bar{g}_{\eta}:=dt^2+\bar{g}_{\eta,t}$ on $(-\delta,\delta)\times S$ via convolution
$$\bar{g}_{\eta,t}:=\int_{-\eta}^{\eta}g_{t-s}\psi_\eta(s)ds.$$
It is clear that $\bar{g}_{\eta}\to g$ in $C^1$. Since $g$ is $C^{1,1}$ with respect to $t$ and smooth with respect to coordinates on $S$,  $\frac{d^2}{dt^2}\bar{g}_{\eta}$ is bounded by the Lipchitz constant of $\frac{d}{dt}g$, while other second order derivatives of $\bar{g}_{\eta}$ converge to those of $g$. Thus all second derivatives of $\bar{g}_{\eta}$ are uniformly bounded on any compact set. Hence there exists $\eta_3>0, K_0>K_g$ such that for any $\eta\in (0,\eta_3)$, the sectional curvatures of  $\bar{g}_{\eta}$ are bounded above by $K_0/2$ on $[-\delta,\delta]\times S$. 

Define 
$$\tilde{g}^{\ell,\eps}_{\eta}:=dt^2+\tilde{g}^{\ell,\eps}_{\eta,t},$$
 where
$$\tilde{g}^{\ell,\eps}_{\eta,t}:=(1-\beta_\delta(t))g^{\ell,\eps}_t+\beta_\delta(t)\bar{g}^{}_{\eta,t}.$$

We need to establish the bounds on sectional curvature when $|t|\in [\delta/2,\delta]$. Notice that in these domains $g$ is at least $C^2$, thus $\bar{g}^{}_{\eta}\to g$ in $C^2$ as $\eta\to 0$ on both $[\delta/2, \delta]\times S$ and $ [-\delta, -\delta/2]\times S$.
Hence for any fixed $\delta$, $\tilde{g}^{\ell,\eps}_{\eta}\to g$ in $C^2$ topology on these domains. Since $K_0>K_g$ and the curvature of $g^{\ell,\eps}$ on $(-\delta, \delta)\times S$ is bounded above by $K_g$ by Proposition \ref{prop: C11 extension}(c), there exists $\eta_4>0$ such that for any $\eta<\eta_4$, the sectional curvatures of $\tilde{g}^{\ell,\eps}_{\eta}$ on both $[\delta/2, \delta]\times S$ and $ [-\delta, -\delta/2]\times S$ are bounded from above by $K_0$. Thus we obtain item (b).

Now we prove (d), since $\tilde{g}^{\ell,\eps}_{\eta}\to g$ in $C^1$ topology as $\eta\to 0$ and principal curvatures depend merely on $\tilde{g}^{\ell,\eps}_{\eta}$ and $\frac{d}{dt}\tilde{g}^{\ell,\eps}_{\eta}$, by Proposition \ref{prop: C11 extension}(b) and the assumption on $\eps$ above Proposition \ref{prop: C11 extension}, we know that there exists $\eta_5>0$ such that for $\eta<\eta_5$, the principal curvatures has a uniform lower bound $\lambda_{\min}(S_0)/2$. 

We finish the proof by taking $\tilde{g}^{\ell,\eps}:=\tilde{g}^{\ell,\eps}_{\eta}$ with $0<\eta<\min\{\eta_3, \eta_4, \eta_5\}$.
\end{proof}

\section{Anosov extension}\label{sec: anosov_ext}

The goal of this section is to prove the main theorem whose statement we recall. 

\begin{theorem}[Theorem A]\label{thm_anosov_ext}
 Let $(\Sigma, g)$ be a compact smooth Riemannian manifold with boundary. Assume that each component of the boundary is a strictly convex sphere. Also assume that $(\Sigma, g)$ has no conjugate points and the trapped set for the geodesic flow is hyperbolic. Then, there exists a codimension 0 isometric  embedding $(\Sigma,g)\subset (\Sigma^{ext}, g^{ext})$ such that $(\Sigma^{ext}, g^{ext})$ is a closed Anosov manifold.
\end{theorem}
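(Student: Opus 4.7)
The plan is to combine the smooth extension from Section~\ref{sec: metric_ext} with a hyperbolic cap, and then verify the Anosov property via Eberlein's criterion (Theorem~\ref{EberleinAnosov}).

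First, since $\partial\Sigma$ is strictly convex and smooth, I extend $(\Sigma,g)$ slightly to a compact $(\hat\Sigma,\hat g)$ with $\Sigma\subset\hat\Sigma$ isometric, still strictly convex spherical boundary, no conjugate points, and the same hyperbolic trapped set. On a collar of $\partial\hat\Sigma$ in the form $dt^2+g_t$ as in Section~\ref{setup notation}, I apply Proposition~\ref{prop: smoothify} with large $M_1$ and small $\delta$, producing a smooth Riemannian extension on $(-\eps,\infty)\times S^{n-1}$ that agrees with $\hat g$ near $t=0$ and has constant negative curvature $-\kappa^2$ (with $\kappa>M_1$) for $t$ sufficiently large. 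The constant-curvature end is isometric to the exterior of a geodesic ball in the $n$-dimensional space form of curvature $-\kappa^2$. I close up by gluing, to each boundary component, a closed hyperbolic $n$-manifold of curvature $-\kappa^2$ with a geodesic ball of matching radius removed; such closed hyperbolic manifolds exist in every dimension by Borel's theorem and, by passing to finite covers, can be chosen with injectivity radius as large as required. The gluing is isometric and smooth along the common boundary sphere, yielding a closed Riemannian manifold $(\Sigma^{ext},g^{ext})$ with $\Sigma\subset\Sigma^{ext}$ isometric.

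Next, I verify Eberlein's criterion: no conjugate points and unboundedness of every nonzero perpendicular Jacobi field. Let $\gamma$ be a geodesic in $\Sigma^{ext}$ and $J$ a nonzero perpendicular Jacobi field along $\gamma$. In the constant-curvature region, $\mu_J$ (see \eqref{def: mu}) satisfies the Riccati inequality $\mu'_J\geq\kappa^2-\mu_J^2$, which for $\mu_J>-\kappa$ drives $\mu_J$ monotonically toward $\kappa$, with transit time from $-Q_M$ to $Q_M$ bounded by $\tfrac{2}{\kappa}\tanh^{-1}(Q_M/\kappa)$; here $Q_M=Q_{\hat\Sigma}$ is the constant from Proposition~\ref{prop: highent_highexit} applied to $\hat\Sigma$. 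Choosing $\kappa$ much larger than $Q_M$, and choosing the injectivity radius of the glued hyperbolic manifolds large enough to absorb the uniformly bounded change in $\mu_J$ across the smoothing layers (controlled by Corollary~\ref{cor: bdd_time_bdd_mu}), every outer arc of $\gamma$ between consecutive entries to $\hat\Sigma$ is long enough that $\mu_J>Q_M$ at the next entry whenever $\mu_J>-Q_M$ at the previous exit. Proposition~\ref{prop: highent_highexit}(2) then propagates this inequality across each transit through $\hat\Sigma$. Iterating in both time directions shows $J$ has no zeros and that $\|J\|$ grows exponentially in the constant-curvature region, in particular is unbounded.

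For geodesics $\gamma$ whose full orbit is contained in $\hat\Sigma$, i.e., lies in the hyperbolic trapped set $\Lambda$, no conjugate points is inherited from $\hat\Sigma$, and unboundedness of perpendicular Jacobi fields follows from the splitting $T_vT^1\hat\Sigma=\mathbb RX(v)\oplus E^u_\perp(v)\oplus E^s_\perp(v)$ of Lemma~\ref{lem: perp_component}: any nonzero perpendicular $J$ has a nontrivial component in either $E^u_\perp$ or $E^s_\perp$ and hence grows exponentially in forward or backward time. The main obstacle is the careful coordination of the parameters $\kappa$, $M_1$, $\delta$, and the injectivity radius of the glued closed hyperbolic manifolds so that the $\mu_J$-recovery in the outer region suffices to reapply Proposition~\ref{prop: highent_highexit} at each successive re-entry to $\hat\Sigma$. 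A secondary technical point is handling almost-tangential re-entries, where strict convexity of $\partial\hat\Sigma$ must be used to give a uniform positive lower bound on the outer arc length between consecutive transits through $\hat\Sigma$.
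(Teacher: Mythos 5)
Your proposal follows the same overall route as the paper: push the metric in a collar to constant negative curvature (Propositions~\ref{prop: C11 extension} and~\ref{prop: smoothify}), cap off with a closed hyperbolic manifold of large injectivity radius obtained from residual finiteness, and verify Eberlein's criterion (Theorem~\ref{EberleinAnosov}) by tracking $\mu_J$. The genuine gap is in the collar-crossing estimate. The collar $\mathcal{C}_+$ has positive curvature (bounded above only by $K_g$ and $K_0$, see Proposition~\ref{prop: C11 extension}(c) and Proposition~\ref{prop: smoothify}(b)), so $\mu_J$ can decrease there; if a geodesic enters the collar nearly tangentially it could dwell inside for a long time, $\mu_J$ could drop below any threshold, or $J$ could vanish inside the collar, and then no amount of subsequent hyperbolic travel rescues the argument. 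You invoke Corollary~\ref{cor: bdd_time_bdd_mu} to control this decrease, but that corollary is stated for a manifold \emph{without conjugate points}: applying it to the extension is circular, since that is precisely what you are trying to prove, and it does not apply to the collar in isolation.

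What is missing is the paper's Lemma~\ref{lem: bdd_trav_time}: it uses strict convexity of the equidistant hypersurfaces inside the collar (from Proposition~\ref{prop: C11 extension}(b) and Proposition~\ref{prop: smoothify}(d)) to derive a uniform upper bound on the transit time through the collar — near-tangential geodesics are swept out by convexity rather than lingering — and then combines this with the curvature upper bound and Lemma~\ref{lem: comp_thm}, under the thin-collar conditions (C2)--(C3), to obtain a quantitative lower bound on $\mu_J$ at the collar's outer edge (Corollary~\ref{cor: mu_change_C+}). Your ``almost-tangential re-entry'' worry about the outer arc length is a non-issue once the boundary spheres are placed at distance at least $R$ (condition (C4)); the real tangency issue is the collar dwell time, and it is the convexity of all the equidistant level sets (not just $\partial\hat\Sigma$) that resolves it. You also omit the case of geodesics that enter the collar yet never reach $\hat\Sigma$ (handled as Case 1 in the paper's proof of Theorem~\ref{thm_anosov_ext}) and fold the no-conjugate-points verification into the growth argument, whereas the paper separates it out (Proposition~\ref{prop: noconjpts}) because it requires its own case analysis on the sequence of crossing times and its own use of Corollaries~\ref{cor: inf_at_C1} and~\ref{cor: inf_at_NSigma} to rule out vanishing of $J$ outside $\Sigma_0$.
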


We first describe the main construction where we allow $\partial\Sigma$ to have several connected components. Afterwards, we need to establish the estimates on Jacobi fields, which then allow us to prove the absence of conjugate points and to finish the proof in Section~\ref{section: absence of conjugate points and the main theorem}. For the sake of simpler notation, in this part of the proof we assume that $\partial\Sigma$ has only one connected component. The argument for the general case is the same.

\subsection{Description of the extension}\label{subsec: description}

To describe the extension, we will need the following fact.

\begin{lemma}[\cite{G17}, Lemma 2.3] \label{lem: extend_metric}
For any sufficiently small $\delta_0>0$, there exists an isometrical embedding of $(\Sigma, g)$ into a smooth Riemannian manifold $(\Sigma^{\delta_0}, g^{\delta_0})$ with strictly convex boundary which is equidistant to the boundary of $\Sigma$, has the same hyperbolic trapped set as $(\Sigma,g)$, and no conjugate points. Moreover, all hypersurfaces equidistant to the boundary of $\Sigma$ in $\Sigma^{\delta_0}\setminus \Sigma$ are strictly convex. 
\end{lemma}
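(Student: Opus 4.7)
The plan is to assemble $\Sigma^{ext}$ in four stages and then verify the Anosov property via Eberlein's criterion (Theorem~\ref{EberleinAnosov}). First, apply Lemma~\ref{lem: extend_metric} to enlarge $(\Sigma,g)$ slightly to $(\Sigma^{\delta_0}, g^{\delta_0})$, which has the same hyperbolic trapped set, no conjugate points, strictly convex boundary, and strictly convex equidistant level sets in the collar $\Sigma^{\delta_0}\setminus\Sigma$. In a tubular neighborhood of each boundary sphere the metric takes the normal form $dt^2+g_t$ used throughout Sections~\ref{construction 1}--\ref{sec: metric_ext}.

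Second, on each boundary component attach the $C^{1,1}$ extension $g^{\ell,\eps}$ from Proposition~\ref{prop: C11 extension} and smooth it via Proposition~\ref{prop: smoothify}, with parameters chosen so that the controlled collar $[0,2+2\eps]\times S$ has sectional curvature at most $-M_0$ outside a small buffer near $t=0$ (where the curvature is merely bounded above by the smoothing constant $K_0$), and so that $[2+2\eps,\infty)\times S$ is of constant sectional curvature $-\kappa^2$ with $\kappa$ as large as we wish. Third, cap off each end by gluing a compact hyperbolic $n$-manifold of curvature $-\kappa^2$ with a geodesic round ball removed: the boundary of this ball matches the standard round cross-section $\frac{1}{\kappa}\sinh(\kappa(t+\tilde r))\cdot ds^2_{n-1}$ of the tube at some radius $t_0$, and strict convexity ensures a $C^\infty$ gluing after an additional arbitrarily small smoothing. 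This step is where the hypothesis that the boundary of $\Sigma$ is a sphere is used. The result is a closed smooth manifold $(\Sigma^{ext},g^{ext})$ with an isometric embedding of $(\Sigma,g)$.

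Fourth, I verify the Anosov property. Since $g^{ext}$ has negative sectional curvature outside a small neighborhood $U_0$ of $\Sigma^{\delta_0}$, absence of conjugate points reduces to showing that any perpendicular Jacobi field $J$ along any geodesic $\gamma$ does not vanish: in the hyperbolic caps and the strictly negatively curved collars this is automatic by the Rauch comparison with $u''-M_0 u=0$, and inside $\Sigma^{\delta_0}$ it follows from Proposition~\ref{prop: highent_highexit}(1)--(3), provided $\mu_J(\text{entry})>Q_{\Sigma^{\delta_0}}$. To ensure this at every entry into $\Sigma^{\delta_0}$, I exploit the ``focusing" effect of the constant-curvature hyperbolic caps: Lemma~\ref{lem: comp_thm} applied to $u''-\kappa^2 u=0$ shows that, after traversing a hyperbolic cap of length $\geq \tau_0$, $\mu_J$ is pushed $O(e^{-2\kappa \tau_0})$-close to $\kappa$, which can be made as large as we wish by choosing $\kappa$ large. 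Combined with Proposition~\ref{prop: highent_highexit}(2), which guarantees $\mu_J(\text{exit})>-Q_{\Sigma^{\delta_0}}$ and $\int\mu_J\,d\tau\geq -C_{\Sigma^{\delta_0}}$ across each passage through $\Sigma^{\delta_0}$, a book-keeping of $\log\|J\|$ along the full orbit shows that every non-trapped geodesic accumulates a definite positive increment per traverse, hence $\|J\|$ is unbounded. For trapped geodesics, which remain in the interior of $\Sigma^{\delta_0}$ for all time, Proposition~\ref{prop: highent_highexit}(1) together with the hyperbolicity of $\Lambda$ again yields $\|J(t)\|\to\infty$ along one of the time directions. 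Eberlein's criterion then gives the Anosov property.

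The main obstacle is the parameter hierarchy in the Jacobi analysis. The thresholds $Q_{\Sigma^{\delta_0}}$ and $C_{\Sigma^{\delta_0}}$ from Proposition~\ref{prop: highent_highexit} depend only on the fixed data of the enlargement, so one must first fix $\delta_0$, then choose $\eps,\ell,M_0$ large enough that the controlled collar preserves $\mu_J\geq Q_{\Sigma^{\delta_0}}$ across it (using Corollary~\ref{cor: bdd_time_bdd_mu} to control the brief passage through the possibly positively-curved buffer near $t=0$), and finally choose $\kappa$ large enough that a single hyperbolic cap passage restores $\mu_J$ to a large positive value regardless of its previous history. A secondary delicate point is that the smoothing near $\{0\}\times S$ in Proposition~\ref{prop: smoothify} only gives a large upper bound $K_0$ on sectional curvature; one must verify that the transit time through this thin buffer is short enough, relative to the $\mu_J$-budget coming from the adjacent hyperbolic cap, that the comparison argument still applies. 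Once these scales are locked in, the verification of Eberlein's criterion is a routine cycle-by-cycle estimate.
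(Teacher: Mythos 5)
Your proposal does not address the statement under review. The statement is Lemma~\ref{lem: extend_metric}: the existence, for small $\delta_0$, of a slight enlargement $(\Sigma^{\delta_0},g^{\delta_0})$ of $(\Sigma,g)$ with strictly convex boundary equidistant to $\partial\Sigma$, the \emph{same} hyperbolic trapped set, no conjugate points, and strictly convex equidistant hypersurfaces in the added collar. What you have written is instead an outline of the proof of Theorem~\ref{thm_anosov_ext} (the full Anosov extension), and — crucially — your very first step is to \emph{invoke} Lemma~\ref{lem: extend_metric}. As a proof of that lemma your argument is therefore circular, and the subsequent stages (the $C^{1,1}$ extension of Proposition~\ref{prop: C11 extension}, the smoothing of Proposition~\ref{prop: smoothify}, the hyperbolic capping, and the Eberlein-criterion bookkeeping with $Q_{\Sigma^{\delta_0}}$, $C_{\Sigma^{\delta_0}}$, $\kappa$) are simply irrelevant to it: they concern what happens \emph{after} the small enlargement has been produced.

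What the lemma actually requires is a local argument near $\partial\Sigma$: one extends the metric a distance $\delta_0$ beyond the boundary (e.g.\ in normal-coordinate form $dt^2+g_t$), and checks three things for $\delta_0$ small — (i) strict convexity of the equidistant hypersurfaces persists, since the principal curvatures of $\partial\Sigma$ are positive and vary continuously; (ii) no conjugate points are created, because any new geodesic segment meets the thin collar only in short arcs and conjugate points cannot appear under a small perturbation of the domain (a compactness/continuity argument as in Corollary~\ref{cor: bdd_time_bdd_mu}); and (iii) the trapped set is unchanged, because strict convexity of the equidistant hypersurfaces forces any geodesic entering the collar $\Sigma^{\delta_0}\setminus\Sigma$ to exit through $\partial\Sigma^{\delta_0}$, so no new trapped orbits arise. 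None of these points appears in your proposal. Note also that the paper itself does not reprove this statement: it is quoted directly from Guillarmou~\cite{G17}, Lemma~2.3, so either a citation or a self-contained argument along the above lines is what is called for here.
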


By the lemma we can fix a $\delta_0>0$ such that the principal curvatures of all hypersurfaces equidistant to the boundary of $\Sigma$ in $\Sigma^{\delta_0}\setminus \Sigma$ are at least $\lambda_{\min}(\partial\Sigma)/2$ where $\lambda_{\min}(\partial\Sigma)$ is the minimum of principal curvatures of $\partial\Sigma$. 

We denote by $Q_0:=Q_{\Sigma^{\delta_0}}$ and $C_0:=C_{\Sigma^{\delta_0}}$ the constants given by Proposition \ref{prop: highent_highexit} when applied to $\Sigma^{\delta_0}$. 
Assume $\partial\Sigma^{\delta_0}=\sqcup_{j=1}^m S^j$ with each $S^j$ diffeomorphic to a sphere. For any sufficiently small { $\eps\in(0,\delta_0)$,} we can consider normal coordinates in the $\eps$-neighborhoods of each $S^j$. In particular, for each $j$, the $\eps$-neighborhood of $S^j$ is isometric to $(-\eps,0]\times S^j$ with metric ${}_jg=dt^2+{}_jg_t$ where $t\in(-\eps,0]$ parametrizes the (signed) distance to $S^j$ and ${}_jg_t$ is the Riemannian metric on $\left(S^j\right)_t=\{t\}\times S^j$. Recall that a metric in Proposition~\ref{prop: smoothify} is the smoothing of a metric in Proposition~\ref{prop: C11 extension}. By applying Proposition~\ref{prop: smoothify}, for any $M_1>1$, $\delta\in(0,\frac{\eps}{2})$ and
\begin{equation}\label{cond: collar_curv}
\ell>\max_{j}\{L(M_1^2,M_1, {}_jg, \eps, \rho)\} \,\,\,\,\,\,\text{(see Proposition~\ref{prop: C11 extension} for the definition of $L$),}
\end{equation}
there exists a smooth Riemannian metric ${}_j\tilde g^{\ell,\eps}$ on $(-\eps,\infty)\times S_j$  for each $j$ with the properties listed in Proposition~\ref{prop: smoothify}. Let $\kappa$ and $\tilde r$ be as in Proposition~\ref{prop: C11 extension} for the chosen $M_1, M_0=M_1^2$ and $\ell$. Then, we excise $\eps$-neighborhood of the boundary of $\Sigma^{\delta_0}$ and replace $(-\eps,0]\times S$ with metric $g^{\delta_0}$ with $\sqcup_{j=1}^m(-\eps, 2+2\eps+\delta]\times S_j$ where each $(-\eps, 2+2\eps+\delta]\times S_j$ is equipped with the metric ${}_j\tilde g^{\ell,\eps}$. We denote the resulting Riemannian manifold with constant curvature $-\kappa^2$ near the boundary by $(\Sigma^{\delta_0}_{\ell,\eps}, g^{\delta_0}_{\ell,\eps})$. Notice that, since $\delta\in(0,\frac{\eps}{2})$, the manifold $(\Sigma^{\delta_0}_{\ell,\eps}, g^{\delta_0}_{\ell,\eps})$ contains an isometric copy $(\Sigma,g)$.

Fix $R>0$. By Proposition~\ref{prop: C11 extension} each metric ${}_j\tilde g^{\ell,\eps}$ has the form $\left(\frac{1}{\kappa}\sinh[\kappa(t+\tilde r)]\right)^2ds^2_{n-1}$ for $t > 3$, which is the form of the hyperbolic metric constant curvature $-\kappa^2$ on $\mathbb H^n$. Therefore we can remove $m$ balls from $\mathbb H^n$ and replace them with $(\Sigma^{\delta_0}_{\ell,\eps}, g^{\delta_0}_{\ell,\eps})$ in such a way that the distance between different components is at least $R$. Clearly we can also perform the same surgery procedure starting from a closed hyperbolic manifold of curvature $-\kappa^2$ provided that the injectivity radius is sufficiently large. 
Existence of such hyperbolic manifolds is well-known and follows from the residual finiteness of the fundamental groups of hyperbolic manifolds. We include the proof for the sake of completeness.
\begin{lemma}\label{lemma: large injectivity radius}
Let $M$ be a compact hyperbolic manifold. Given any $D>0$ there exists a finite cover $\tilde M\to M$ such that the injectivity radius of $\tilde M$ is $\ge D$.
\end{lemma}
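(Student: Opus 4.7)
The plan is to use residual finiteness of compact hyperbolic manifold groups. Write $M=\mathbb H^n/\Gamma$, where $\Gamma\subset\mathrm{Isom}(\mathbb H^n)$ is a discrete, torsion-free, cocompact group. For any finite-index subgroup $\Gamma'\leq\Gamma$, the quotient $\tilde M:=\mathbb H^n/\Gamma'$ is a finite cover of $M$, and its injectivity radius equals $\frac{1}{2}\inf\{\ell(\gamma):\gamma\in\Gamma'\setminus\{1\}\}$, where $\ell(\gamma):=\inf_{x\in\mathbb H^n}d(x,\gamma x)$ is the translation length. So the task reduces to producing a finite-index subgroup $\Gamma'\leq\Gamma$ that avoids every non-trivial element with translation length less than $2D$.

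First I would cut the obstruction down to a finite set. Since $M$ is compact and hyperbolic, $\Gamma$ is torsion-free and contains no parabolics, so every non-trivial $\gamma\in\Gamma$ is hyperbolic and realizes $\ell(\gamma)$ along its translation axis. Conjugacy classes of such elements are in bijection with oriented closed geodesics of $M$, with $\ell(\gamma)$ equal to the corresponding geodesic length. A compact hyperbolic manifold has only finitely many closed geodesics of length less than any fixed bound, so there are only finitely many conjugacy classes $[\gamma_1],\dots,[\gamma_k]$ in $\Gamma$ with $\ell(\gamma_i)<2D$. Fix representatives $\gamma_1,\dots,\gamma_k$.

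Next I would invoke residual finiteness. The group $\Gamma$ is a finitely generated subgroup of the linear group $\mathrm{Isom}(\mathbb H^n)\cong O(n,1)$, hence residually finite by Malcev's theorem (equivalently, by Selberg's lemma). Therefore, for each $i$ there is a finite-index subgroup $H_i\leq\Gamma$ with $\gamma_i\notin H_i$; replacing $H_i$ by its normal core gives a finite-index normal subgroup $\Gamma_i\trianglelefteq\Gamma$ still missing $\gamma_i$. Set
\begin{equation*}
\Gamma':=\bigcap_{i=1}^{k}\Gamma_i,
\end{equation*}
which is again a finite-index normal subgroup of $\Gamma$.

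Finally I verify the conclusion. Since $\Gamma'$ is normal and avoids each $\gamma_i$, it avoids the entire conjugacy class $[\gamma_i]$, so every non-trivial $\gamma\in\Gamma'$ satisfies $\ell(\gamma)\geq 2D$. Consequently $\tilde M=\mathbb H^n/\Gamma'$ is a finite cover of $M$ with injectivity radius at least $D$. The only substantive input is residual finiteness of $\Gamma$, which is the single step that uses more than bare geometric data; everything else is a repackaging of standard facts about the length spectrum of compact hyperbolic manifolds, and I do not anticipate any serious obstacle beyond citing these tools correctly.
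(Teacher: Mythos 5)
Your proof is correct and follows essentially the same route as the paper: identify the finitely many (conjugacy classes of) elements corresponding to short closed geodesics, then use residual finiteness of $\pi_1(M)$ to pass to a finite-index normal subgroup avoiding them all, and take the corresponding finite cover. You are somewhat more careful than the paper's one-line sketch — explicitly taking normal cores and intersecting to get a single subgroup, and noting that normality handles conjugates — but these are precisely the details the paper's terse argument implicitly relies on, so the approaches coincide.
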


\begin{proof}
Let $\alpha_1, ... \alpha_N$ be the list of closed geodesics on $M$ whose length is less than $2D$ and let $\beta_1, ... \beta_N$ be the elements of $\pi_1(M,x_0)$ which are freely homotopic to these geodesics. Because $\pi_1(M,x_0)$ is residually finite~\cite{Mal} there exists a finite group $G$ and a homomorphism $h\colon \pi_1(M,x_0)\to G$ such that $h(\beta_i)\neq id_G$. Then the finite cover $\tilde M$ which corresponds to kernel of $h$ has injectivity radius $>D$.
\end{proof}

Thus we  obtain
 a smooth closed Riemannian manifold $(\Sigma^{ext}, g^{ext})$ which contains an isometric copy of $(\Sigma,g)$. To guarantee that the constructed extension is Anosov $(\Sigma^{ext}, g^{ext})$ , we make some choice of parameters $\eps, \ell, \delta, M_1,$ and $R$ such that they  satisfy the following conditions:

\begin{enumerate}
\item [(C1)] $M_1$ is sufficiently large;
\item [(C2)] \label{cond: eps} $\eps$ is sufficiently small;
\item [(C3)]\label{cond: delta} $\delta<\min\{\delta_0, \eps/2\}$ and is sufficiently small;
\item [(C4)] $R$  is sufficiently large.
\end{enumerate}
The precise conditions of  above constants can be found in Appendix \ref{app: const_in_M_ext}.

\begin{remark}\label{remark about kappa}
    We want to point out that the resulting constant sectional curvature $\kappa$ in the extension can be a priori arbitrarily large, and its value depends on $\ell$ which depends on the given Riemannian manifold with boundary . This can be seen from Lemma~\ref{lemma: combine f with constant 2}.
\end{remark}

We introduce notation that we will use in the next sections.

Denote by $\mathcal{C}_+^1:=\cup_{j=1}^m[-\delta, \delta] \times S^j $ and $\mathcal{C}_+^2:=\cup_{j=1}^m[\delta, \eps] \times S^j$.  We decompose $\Sigma^{ext}$ into three domains
$$\Sigma^{ext}=\Sigma_0\cup \mathcal{C}_+\cup \mathcal{D}_-,$$
where $\Sigma_0:=\Sigma \cup\cup_{j=1}^m [-\delta_0, -\delta]\times S^j, \mathcal{C}_+:=\mathcal{C}_+^1\cup \mathcal{C}_+^2$ and $\mathcal{D}_-:=\Sigma^{ext}\setminus (\Sigma_0\cup \mathcal{C}_+)$. 

We summarize the properties of the resulting extension that come from Propositions \ref{prop: highent_highexit}, Propositions~\ref{prop: C11 extension} and \ref{prop: smoothify} with our choice of parameters:

(i) We have the conclusion of Proposition \ref{prop: highent_highexit} for $(\Sigma_0, g^{ext})$ with $Q_0$ and $C_0$. 

(ii) The sectional curvatures on $\mathcal{D}_-$ are at most $-(Q_0+3)^2$. And all maximal geodesic segments within $\mathcal{D}_-$ have length at least $R$.

(iii) On $\mathcal{C}_+^1$, the curvature upper bound is $K_0$ and the principal curvatures for hypersurfaces in $\mathcal{C}_+^1$ equidistant to $\Sigma$ are at least $\lambda_{\min}(\partial\Sigma)/4$. 

(iv) On $\mathcal{C}_+^2$, the curvature upper bound is $K_g$ and the principal curvatures for hypersurfaces in $\mathcal{C}_+^2$ equidistant to $\Sigma$ are at least $\lambda_{\min}(\partial\Sigma)/2$.

\subsection{Travel time and Jacobi estimate in the collar}

As we mentioned before, for the sake of simpler notation, we assume that $\partial\Sigma$ has only one connected component.

We denote the  boundary of $\Sigma^{\delta_0}$ by $S$ (see Section~\ref{subsec: description}) and let $S_t=\{t\}\times S$.

We want to estimate the travel time and change of $\mu_J$ when a geodesic goes through $\mathcal{C}_+$. { To do that we consider} a setting which is (formally)  more general than $(iii)$ and $(iv)$ above which we proceed to describe. 

Let $c: [0,\tau]\to [b_-, b_+] \times S $ be a unit speed geodesic segment in $[b_-, b_+] \times S$ on which the sectional curvature is bounded from above by $\kappa_0>0$. We may assume the principal curvatures of $S_t\,(b_-\leq t\leq b_+)$ are at least $\lambda>0$. Namely, the shape operator satisfies
\begin{equation}\label{eq: lower_bd_A}
\langle A(t)X,X\rangle\geq \lambda \|X\|^2,\,\,\,\,\forall t\in[b_-, b_+], X\in S_t.
\end{equation}
Moreover, we assume that
\begin{equation}\label{eq: lower_bd_width}
b_+-b_-<\frac{1}{\lambda}\ln\cosh\frac{\lambda}{2\kappa_0+2(Q+1)^2}
\end{equation}
for some $Q>0$. 

For any  $s\in[0,\tau]$, let $d(s)$ be the $t$-coordinate of $c(s)$. By the first variation formula, $d'(s)=\langle T, \dot{c}(s)\rangle$. Let $W(s)$ be the component of $\dot{c}(s)$ orthogonal to $T$. Then, we have $\|W(s)\|^2=1-d'(s)^2, \nabla_{\dot{c}(s)} T=\nabla_{W(s)} T\text{ and }\nabla_{W(s)} T\perp T.$ Hence, by the second variation formula, 
\begin{equation}\label{eq: d''}
d''(s)=\langle \nabla_ {\dot{c}(s)} T,  \dot{c}(s)\rangle=\langle \nabla_ {W(s)} T,  W(s)\rangle=\langle A(d(s))W(s), W(s)\rangle.
\end{equation}
 
\begin{lemma}\label{lem: bdd_trav_time}
The travel time in the collar has the following upper bound
$$\tau\leq(\kappa_0+(Q+1)^2)^{-1}.$$
 For any perpendicular Jacobi field $J$ along $c$ with $J(0)\neq 0$, if $\mu_J(0)\geq -Q$, then $\mu_J(t)>-Q-1$ for $t\in [0,\tau]$ and 
 $$\int_0^\tau \mu_J(t)dt\geq -\frac{1}{Q+1}.$$
  Similarly, if $\mu_J(0)>Q+1$, then $\mu_J(t)>Q$ for $t\in [0,\tau]$.
 \end{lemma}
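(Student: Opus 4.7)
My plan is to establish the three assertions in sequence, using (i) a convexity/ODE-comparison argument for the $t$-coordinate $d(s)$ to bound the travel time, and (ii) a Riccati-type differential inequality $\mu_J'\ge-\kappa_0-\mu_J^2$ for the Jacobi field.

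For the travel time bound, I would combine \eqref{eq: d''} with \eqref{eq: lower_bd_A} and the identity $\|W(s)\|^2=1-d'(s)^2$ to get $d''(s)\ge\lambda(1-d'(s)^2)\ge 0$, so $d$ is convex on $[0,\tau]$ with a minimum at some $s_*\in[0,\tau]$. Comparing $u=d'$ with the model ODE $\phi'=\lambda(1-\phi^2)$ anchored at $\phi(s_*)=0$ gives $d'(s)\ge\tanh(\lambda(s-s_*))$ for $s\ge s_*$ and a symmetric bound for $s\le s_*$. Integrating yields
$$d(s)-d(s_*)\;\ge\;\tfrac{1}{\lambda}\ln\cosh\bigl(\lambda|s-s_*|\bigr).$$
Since $d(\cdot)\in[b_-,b_+]$, applying this at $s=0$ and $s=\tau$ together with the strict inequality \eqref{eq: lower_bd_width} and strict monotonicity of $\ln\cosh$ on $[0,\infty)$ forces both $s_*$ and $\tau-s_*$ to be strictly less than $\tfrac{1}{2(\kappa_0+(Q+1)^2)}$, so $\tau<(\kappa_0+(Q+1)^2)^{-1}$.

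For the lower bound on $\mu_J$, I first derive $\mu_J'\ge-K(\sigma_{\dot c,J})-\mu_J^2\ge-\kappa_0-\mu_J^2$ by direct computation from \eqref{Jacobi_equation} using $g(J'',J)=-K\|J\|^2$ and the Cauchy--Schwarz bound $\|J'\|^2/\|J\|^2\ge\mu_J^2$. Then I argue by contradiction: suppose $\mu_J(t_*)=-(Q+1)$ at the first time $t_*\in(0,\tau]$, and let $t_1$ be the largest time in $[0,t_*]$ with $\mu_J(t_1)\ge 0$, or set $t_1=0$ if no such time exists. In both cases $\mu_J(t_1)\ge-Q$ and $-(Q+1)\le\mu_J\le 0$ on $[t_1,t_*]$, so $\mu_J^2\le(Q+1)^2$ there and the Riccati inequality reduces to $\mu_J'\ge-(\kappa_0+(Q+1)^2)$. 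Integration from $t_1$ to $t_*$ yields
$$-1\;\ge\;\mu_J(t_*)-\mu_J(t_1)\;\ge\;-(t_*-t_1)(\kappa_0+(Q+1)^2),$$
hence $t_*-t_1\ge(\kappa_0+(Q+1)^2)^{-1}>\tau$, contradicting $t_*\le\tau$. The integral bound follows immediately: since $\mu_J>-(Q+1)$ and $\tau<(\kappa_0+(Q+1)^2)^{-1}\le(Q+1)^{-2}$, we get $\int_0^\tau\mu_J\,dt>-(Q+1)\tau>-\tfrac{1}{Q+1}$.

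The upper bound case is handled by time reversal. If $\mu_J(0)>Q+1$ but $\mu_J(t_0)\le Q$ for some smallest $t_0\in(0,\tau]$, then $J(t_0)\neq 0$ (since $\mu_J$ would otherwise have diverged to $-\infty$ first), and the reversed field $\tilde J(s):=J(t_0-s)$ along the reversed geodesic satisfies $\mu_{\tilde J}(s)=-\mu_J(t_0-s)$ and the same hypotheses on $\kappa_0$, $\lambda$, and width. Since $\mu_{\tilde J}(0)=-\mu_J(t_0)\ge-Q$, the lower bound just proved gives $\mu_{\tilde J}(t_0)>-(Q+1)$, i.e.\ $\mu_J(0)<Q+1$, contradicting the hypothesis. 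The principal technical point is the Riccati contradiction: the subinterval $[t_1,t_*]$ is chosen precisely so that the quadratic term $\mu_J^2$ is controlled by $(Q+1)^2$, which is exactly what makes the resulting linear lower bound on $\mu_J'$ sharp enough to force a descent of magnitude $1$ within time $(\kappa_0+(Q+1)^2)^{-1}$.
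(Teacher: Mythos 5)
Your proof is correct, and the travel-time bound is handled exactly as in the paper (convexity of $d$, comparison with $\tanh$, integration and appeal to the strict inequality~\eqref{eq: lower_bd_width}). Where you diverge is the Jacobi-field estimate: the paper invokes the Gulliver comparison result (Lemma~\ref{lem: comp_thm}), comparing $\mu_J$ with $u'/u$ for the solution of $u''+\kappa_0 u=0$ with $u(0)=1$, $u'(0)=-Q$, and then uses the Mean Value Theorem on $\tan^{-1}$ to convert the travel-time bound into $\mu_J(t)>-Q-1$. You instead derive the Riccati inequality $\mu_J'\ge-\kappa_0-\mu_J^2$ directly from the Jacobi equation and Cauchy--Schwarz, isolate a subinterval $[t_1,t_*]$ on which $-(Q+1)\le\mu_J\le 0$ so that the quadratic term is bounded by $(Q+1)^2$, and extract a linear descent bound that contradicts the travel-time estimate. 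This is a self-contained and somewhat more elementary argument: it bypasses Lemma~\ref{lem: comp_thm} entirely and replaces the trigonometric/MVT bookkeeping with a clean contradiction on a well-chosen subinterval. What the paper's route buys is uniformity with the rest of the article, which leans on Lemma~\ref{lem: comp_thm} repeatedly; what yours buys is transparency about where the constant $\kappa_0+(Q+1)^2$ actually comes from. Your integral bound ($\int_0^\tau\mu_J>-(Q+1)\tau>-\tfrac{1}{Q+1}$) is a slight weakening of the paper's intermediate estimate $-\tfrac{Q+1}{\kappa_0+(Q+1)^2}$, but still yields the stated conclusion, and your time-reversal treatment of the last assertion — including the observation that $J$ cannot vanish before $t_0$ because $\mu_J$ would already have crossed $Q$ from above — fills in a step the paper leaves implicit. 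One small point worth keeping in mind: your Riccati contradiction genuinely requires the \emph{strict} inequality $\tau<(\kappa_0+(Q+1)^2)^{-1}$, which you correctly note follows from the strict inequality in~\eqref{eq: lower_bd_width} even though the lemma's statement only records a non-strict bound.
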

\begin{proof}
If $|d'(s_0)|=1$ for some $s_0\in [b_-, b_+]$ then $\dot c(s_0)=T$ and therefore $\dot{c}(s)=T$ for all $s\in [b_-, b_+]$ thus the travel time is $\tau=b_+-b_-$. Hence we can assume $|d'(s)|<1$ for all $s\in [0,\tau]$. By (\ref{eq: lower_bd_A}) and (\ref{eq: d''}), we have 
$$d''=\langle \nabla_W T, W\rangle=\|W\|^2\left\langle A(d(s))\left(\frac{W}{\|W\|}\right), \frac{W}{\|W\|}\right\rangle\geq (1-(d')^2)\lambda.$$
Assume $d(t_0)=\min d(s)$. If $t_0\in(0,\tau)$ then $d'(t_0)=0$, while $t_0=0$ implies that $d'(t_0)>0$. The case when $t_0=\tau$ is symmetric to $t_0=0$. Thus we may assume $d'(t_0)\geq 0$. For $s\geq t_0$ we have
$$\frac{1}{2}\ln\left|\frac{1+d'(s)}{1-d'(s)}\right|=\frac{1}{2}\ln\left|\frac{1+d'(t_0)}{1-d'(t_0)}\right|+\int_{t_0}^s \frac{d''(\tau)}{1-d'(\tau)^2}d\tau\geq\lambda (s-t_0),$$
which implies that $d'(s)\geq \tanh(\lambda (s-t_0))$. Hence, 
$$d(s)=d(t_0)+\int_{t_0}^sd'(\tau)d\tau \geq b_-+\int_{0}^{s-t_0}\tanh(\lambda\tau)d\tau=b_-+\frac{\ln\cosh(\lambda (s-t_0))}{\lambda}.$$
On the other hand, $d(s)\leq b_+$ for all $s\in [0,\tau]$. Together with \eqref{eq: lower_bd_width} we obtain
$$\tau-t_0\leq \frac{1}{\lambda}\cosh^{-1}e^{\lambda(b_+-b_-)}<\frac{1}{2\kappa_0+2(Q+1)^2}.$$
Thus, again by symmetry, we have 
$$\tau\leq(\kappa_0+(Q+1)^2)^{-1}.$$ 
 Now we estimate the change of $\mu_J$. The solution of $u''+\kappa_0u=0$ with $u(0)=1, u'(0)=-Q$ satisfies 
$$\frac{u'(t)}{u(t)}=-\sqrt{\kappa_0}\tan\left(\sqrt{\kappa_0}t+\tan^{-1}\frac{Q}{\sqrt{\kappa_0}}\right), \,\,\,\,\, t\in [0,\tau].$$
By Mean Value Theorem, 
$$\tau\leq \frac{1}{\kappa_0+(Q+1)^2}<\frac{1}{\sqrt{\kappa_0}}\tan^{-1}\frac{Q+1}{\sqrt{\kappa_0}}-\frac{1}{\sqrt{\kappa_0}}\tan^{-1}\frac{Q}{\sqrt{\kappa_0}}$$
Thus $u'(t)/u(t)>-Q-1$ for $0\leq t\leq \tau$. Since the sectional curvature in $[b_-, b_+] \times S$ is bounded from above by $\kappa_0$, applying Lemma \ref{lem: comp_thm} with $f\equiv \kappa_0$ on $[0,\tau]$, we obtain $\mu_J(t)\geq u'(t)/u(t)>-Q-1$. Thus 
$$\int_0^\tau \mu_J(t)dt\geq -\frac{Q+1}{\kappa_0+(Q+1)^2}\geq -\frac{1}{Q+1}.$$
The last assertion of the lemma follows by using the argument by contradiction and reversing time.
\end{proof}

\begin{corollary}\label{cor: inf_at_C1}
Let $J$ be a nonzero perpendicular Jacobi field along $c$ with $J(t^*)=0$ for some $t^*\in(0,\tau)$, then $J$ does not vanish on $(t^*,\tau]$ and $\mu_J(\tau)>Q$.
\end{corollary}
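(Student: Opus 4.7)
The plan is to reduce this to the third (last) assertion of Lemma~\ref{lem: bdd_trav_time} applied to a suitable subsegment of $c$. The main idea is that a simple zero of $J$ at $t^*$ forces $\mu_J(t) \to +\infty$ as $t \to t^{*+}$, and then the comparison machinery already proved in Lemma~\ref{lem: bdd_trav_time} controls $\mu_J$ from below on the remainder of the segment.

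First I would analyze $\mu_J$ near the zero. Since $J$ is a nonzero Jacobi field with $J(t^*)=0$, uniqueness of Jacobi fields gives $J'(t^*)\neq 0$. Expanding $\|J(t)\|^2 = \|J'(t^*)\|^2(t-t^*)^2 + O((t-t^*)^3)$, a direct computation from \eqref{def: mu} gives $\mu_J(t) = \frac{1}{t-t^*} + O(1)$ as $t \to t^{*+}$. In particular, $\mu_J(t)\to+\infty$ from the right of $t^*$, and $J$ does not vanish on some open interval $(t^*, t^*+\eta)$.

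Next I would pick $\tilde t \in (t^*, t^*+\eta)\cap(t^*,\tau)$ such that $\mu_J(\tilde t) > Q+1$. The restricted segment $c|_{[\tilde t,\tau]}$ is again a unit speed geodesic lying in the same slab $[b_-,b_+]\times S$, so it inherits the same curvature bound $\kappa_0$, the same convexity bound \eqref{eq: lower_bd_A}, and the same slab width bound \eqref{eq: lower_bd_width}. Therefore Lemma~\ref{lem: bdd_trav_time} applies to $c|_{[\tilde t,\tau]}$ and its third assertion yields $\mu_J(s) > Q$ for all $s\in[\tilde t,\tau]$. In particular $\mu_J(\tau)>Q$, which is the second conclusion of the corollary.

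Finally, $J$ cannot vanish anywhere in $(t^*,\tau]$: on $(t^*,\tilde t]$ it is nonzero by the choice of $\tilde t$, and on $[\tilde t,\tau]$ the strict lower bound $\mu_J>Q$ forbids a zero, since $\mu_J(s)\to -\infty$ as $s$ approaches any zero of $J$ from the left (by the same Taylor expansion as in the first step, applied to a hypothetical zero). There is no real obstacle here; the argument is just a bootstrap from the already-proved Lemma~\ref{lem: bdd_trav_time}, and the only point to verify carefully is that the hypotheses of that lemma are preserved when restricting to the subsegment $[\tilde t,\tau]$, which is immediate since all the constants $\kappa_0,\lambda,b_\pm,Q$ are properties of the ambient slab and not of the particular subsegment.
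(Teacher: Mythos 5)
Your argument is correct and follows what is evidently the intended (omitted) proof: a simple zero of $J$ at $t^*$ forces $\mu_J(t)\to+\infty$ as $t\to t^{*+}$, so one can enter the last assertion of Lemma~\ref{lem: bdd_trav_time} on a subsegment $[\tilde t,\tau]$ with $\mu_J(\tilde t)>Q+1$; the hypotheses of the lemma (curvature bound $\kappa_0$, shape operator bound \eqref{eq: lower_bd_A}, width bound \eqref{eq: lower_bd_width}) are all properties of the ambient slab and pass to subsegments, as you correctly note. Your Taylor-expansion justification of $\mu_J\to+\infty$ near the zero, and the observation that $\mu_J>Q$ on $[\tilde t,\tau]$ rules out further zeros (since $\mu_J\to-\infty$ from the left of any zero), are both sound.
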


\begin{corollary}\label{cor: mu_change_C+}
Let $c:[0,\tau_0]\to \mathcal{C}_+$ be a geodesic in $\mathcal{C}_+$ and $J$ be a perpendicular Jacobi field along $c$. 

(a) If $c(0)\in S_{-\delta}, c(\tau_0)\in S_{\eps}$ and $\mu_J(0)>-Q_0$, then $\mu_J(t)>-Q_0-2$ for all $t\in [0,\tau_0]$ and 
$$\int_0^{\tau_0} \mu_J(t)dt> -\frac{2}{Q_0+1}.$$

(b) If $c(0)\in S_{\eps}, c(\tau_0)\in S_{-\delta}$ and $\mu_J(0)>Q_0+2$, then $\mu_J(t)>Q_0$ for all $t\in [0,\tau_0]$.

(c)  If both $c(0), c(\tau_0)\in S_{\eps}$ and $\mu_J(0)>Q_0+2$, then $\mu_J(t)>-Q_0-2$ for all $t\in [0,\tau_0]$ and 
$$\int_0^{\tau_0} \mu_J(t)dt> -\frac{2}{Q_0+1}.$$
\end{corollary}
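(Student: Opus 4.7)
The plan is to decompose the geodesic $c$ into maximal subarcs contained in $\mathcal C_+^1$ or $\mathcal C_+^2$ and to apply Lemma~\ref{lem: bdd_trav_time} on each subarc. The decisive observation is that conditions (C2) and (C3) have been calibrated precisely so that the width hypothesis~\eqref{eq: lower_bd_width} of Lemma~\ref{lem: bdd_trav_time} is satisfied on $\mathcal C_+^2$ with parameters $(\lambda,\kappa_0,Q)=(\lambda_{\min}(\partial\Sigma)/2,K_g,Q_0+1)$, and on $\mathcal C_+^1$ with $(\lambda,\kappa_0,Q)=(\lambda_{\min}(\partial\Sigma)/4,K_0,Q_0)$. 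With these choices in hand, each time $c$ traverses a collar we ``spend'' at most one unit of $Q$, and the arithmetic $Q_0 \to Q_0{+}1 \to Q_0{+}2$ matches the statements.

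Next I would analyze the $t$-coordinate $d(s)$ along $c$ by reusing the convexity argument in the proof of Lemma~\ref{lem: bdd_trav_time}, which gives $d'(s)\ge\tanh(\lambda(s-t_0))$ for $s\ge t_0$ (and symmetrically for $s\le t_0$) whenever $t_0$ is a minimum of $d$. In case (a), $d(0)=-\delta$ is the global minimum, so $d$ is strictly increasing and crosses $S_\delta$ at a unique time $s^*$; case (b) is its time-reverse. In case (c), $d$ attains a strict interior minimum $d_{\min}$; if $d_{\min}\ge\delta$ the geodesic stays in $\mathcal C_+^2$, otherwise it crosses $S_\delta$ exactly twice at times $s_1<s_2$, with $c|_{[s_1,s_2]}\subset \mathcal C_+^1$.

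I would then apply Lemma~\ref{lem: bdd_trav_time} piece by piece. For case (a), the first assertion of the lemma on $[0,s^*]\subset\mathcal C_+^1$ with $Q=Q_0$ gives $\mu_J(t)>-Q_0-1$ and $\int_0^{s^*}\mu_J\,dt>-\frac{1}{Q_0+1}$; applying it again on $[s^*,\tau_0]\subset\mathcal C_+^2$ with $Q=Q_0+1$ yields $\mu_J(t)>-Q_0-2$ and $\int_{s^*}^{\tau_0}\mu_J\,dt>-\frac{1}{Q_0+2}$, and summing uses $\frac{1}{Q_0+1}+\frac{1}{Q_0+2}<\frac{2}{Q_0+1}$. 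For case (b), the second assertion of the lemma on $[0,s^*]\subset \mathcal C_+^2$ with $Q=Q_0+1$ (using $\mu_J(0)>Q_0+2$) yields $\mu_J>Q_0+1$; then on $[s^*,\tau_0]\subset \mathcal C_+^1$ with $Q=Q_0$ it yields $\mu_J>Q_0$. For case (c.ii) I would use the second assertion successively on $[0,s_1]$ ($Q=Q_0+1$) and on $[s_1,s_2]$ ($Q=Q_0$) to keep $\mu_J>Q_0$ through the descending portion, so $\mu_J(s_2)>Q_0$; then the first assertion on $[s_2,\tau_0]\subset\mathcal C_+^2$ with $Q=Q_0+1$ delivers $\mu_J(t)>-Q_0-2$ and $\int_{s_2}^{\tau_0}\mu_J\,dt>-\frac{1}{Q_0+2}$. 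Since $\mu_J>0$ on $[0,s_2]$, the integral bound follows. Case (c.i) is a single direct application of the lemma inside $\mathcal C_+^2$.

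The main obstacle is purely bookkeeping: verifying that the parameter matching between the collar widths and the required hypotheses persists through the different combinations of ``keep $\mu_J$ large'' (assertion 2) and ``keep $\mu_J$ from becoming too negative'' (assertion 1). Once the geometric structure of $d(s)$ is established and the parameters are aligned correctly, each subsegment is handled by Lemma~\ref{lem: bdd_trav_time} with no further input.
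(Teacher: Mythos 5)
Your proof is correct and follows essentially the same approach as the paper's: you decompose the geodesic into subarcs lying in $\mathcal C_+^1$ and $\mathcal C_+^2$ using the convexity of the height function $d(s)$, and apply Lemma~\ref{lem: bdd_trav_time} on each piece with exactly the parameter matching $(\lambda,\kappa_0,Q)$ dictated by conditions (C2) and (C3). The only cosmetic difference is that for part (b) you invoke the second assertion of Lemma~\ref{lem: bdd_trav_time} directly in forward time, whereas the paper phrases the same step as ``reversing time and applying (a)'' — these are equivalent since that second assertion is itself proved by time-reversal.
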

\begin{proof}
On $\mathcal{C}_+^1$ (resp. $\mathcal{C}_+^2$), we apply Lemma \ref{lem: bdd_trav_time}  with $\kappa_0=K_0$ (resp. $\kappa_0=K_g$), $\lambda=\lambda_{\min}(\partial\Sigma)/4$ (resp. $\lambda=\lambda_{\min}(\partial\Sigma)/2$) and (C3) (resp. (C2)) is equivalent to condition \eqref{eq: lower_bd_width}  with $Q=Q_0$ (resp. $Q=Q_0+1$). 

(a) Since all hypersurfaces $S_t$ are convex, there exists $\tau\in [0,\tau_0]$ such that $c[0,\tau]\subseteq \mathcal{C}_+^1$ and $c[\tau, \tau_0]\subseteq \mathcal{C}_+^2$. By applying Lemma \ref{lem: bdd_trav_time} on both $\mathcal{C}_+^1$ and $\mathcal{C}_+^2$ we obtain $\mu_J(t)>-Q_0-1$ on $[0,\tau]$ and $\mu_J(t)>-Q_0-2$ on $[\tau, \tau_0]$. Thus
$$\int_0^{\tau_0} \mu_J(t)dt\geq -\frac{1}{Q_0+1}-\frac{1}{Q_0+2}> -\frac{2}{Q_0+1}.$$

(b) This item follows by reversing time and applying (a).

(c) If $c[0,\tau_0]$ does not intersects $\mathcal{C}_+^1$, then applying Lemma \ref{lem: bdd_trav_time}  on $\mathcal{C}_+^2$ implies $\mu_J(t)>Q_0+1$ for all $t\in [0,\tau_0]$. Otherwise assume $c[a, b]\subseteq \mathcal{C}_+^1$, then we get (c) by applying Lemma \ref{lem: bdd_trav_time} three times on $c[0,a], c[a,b]$ and $c[b,\tau_0]$. The estimate on the integral follows from item (a).
\end{proof}

\subsection{Jacobi field estimate outside $\Sigma_0$}

The following lemma allows us to estimate how Jacobi fields change outside $\Sigma_0$.
\begin{lemma}\label{lem: mu_change_NSigma}
Let $c: [\tau_1,\tau_2]\to \mathcal{C}_+\cup\mathcal{D}_-$ be a maximal geodesic with $c(\tau_1)\in \partial\Sigma_0\cup  \mathcal{D}_-$,  and
 $J$  a perpendicular Jacobi field along $c$ with $-Q_0<\mu_J(\tau_1)<\infty$, then $J(t)\neq 0$ for all $t\in[\tau_1,\tau_2]$. Moreover,
\begin{enumerate}[label=(\roman*)]
\item\label{tau2_finite} If $\tau_2<\infty$,  and $c(\tau_1)\in \partial\Sigma_0$, then $\mu_J(\tau_2)>Q_0$ and $\int_{\tau_1}^{\tau_2}\mu_J(t)dt>Q_0+C_0+2$;

\item\label{tau2_infinite} If $\tau_2=\infty$, then $\int_{\tau_1}^{\tau_2}\mu_J(t)dt=\infty$;

\item\label{boundontau12} $\mu_J(t)>-Q_0-2$ for all $t\in[\tau_1,\tau_2]$.
\end{enumerate}
\end{lemma}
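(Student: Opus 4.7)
The plan is to decompose $c$ into alternating pieces in $\mathcal{C}_+$ and $\mathcal{D}_-$ and track $\mu_J$ piecewise using Corollary~\ref{cor: mu_change_C+} and Lemma~\ref{lem: comp_thm}. First I would observe that in $\mathcal{C}_+$ the level sets $S_t$ are strictly convex (properties (iii)--(iv) of the extension), so the shape operator is positive definite; writing $d(s)$ for the $t$-coordinate of $c(s)$ and $W(s)=\dot c(s)-d'(s)T$, the second variation formula yields $d''(s)=\langle A(d(s))W(s),W(s)\rangle\ge 0$, so $d$ is convex on every $\mathcal{C}_+$-piece of $c$. Since $c(\tau_1)\in S_{-\delta}$ with $d'(\tau_1)\ge 0$, the first piece crosses monotonically from $S_{-\delta}$ to $S_\eps$; afterwards every $\mathcal{C}_+$-piece enters through $S_\eps$ from $\mathcal{D}_-$ and either returns to $S_\eps$ (intermediate) or reaches $S_{-\delta}$ (terminal, possible only as the last piece since it brings $c$ back to $\partial\Sigma_0$). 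Hence $c$ decomposes as $(a),D_1,(c),D_2,\ldots,D_k$ with a final $(b)$-piece if $\tau_2<\infty$, where the labels refer to the cases of Corollary~\ref{cor: mu_change_C+} and each $D_i$ has length $\ge R$ by property~(ii) of the extension.

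\textbf{Piecewise estimates and item~\ref{boundontau12}.}
On the initial $(a)$-piece, Corollary~\ref{cor: mu_change_C+}(a) directly gives $\mu_J>-Q_0-2$ and $\int\mu_J\,dt>-\tfrac{2}{Q_0+1}$. For each $D_i=[s,s+L]$ with entry value $\mu_J(s)>-Q_0-2$ I would apply Lemma~\ref{lem: comp_thm} with $f\equiv -(Q_0+3)^2$: the comparison solution $u$ of $u''=(Q_0+3)^2u$ with $u(s)=\|J(s)\|$ and $u'(s)=-(Q_0+2)\|J(s)\|$ is strictly positive on $[s,\infty)$, and the ratio $u'/u$ increases monotonically from $-(Q_0+2)$ toward $Q_0+3$, crossing $Q_0+2$ at shifted time $T=\ln(2Q_0+5)/(Q_0+3)$. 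Because $L\ge R\ge T$ by~(C4), this yields $\mu_J>-Q_0-2$ throughout $D_i$ and $\mu_J(s+L)\ge Q_0+2$. Intermediate $(c)$-pieces then begin with $\mu_J>Q_0+2$, so Corollary~\ref{cor: mu_change_C+}(c) gives $\mu_J>-Q_0-2$ with integral $>-\tfrac{2}{Q_0+1}$; a terminal $(b)$-piece also begins with $\mu_J>Q_0+2$ and Corollary~\ref{cor: mu_change_C+}(b) gives $\mu_J>Q_0$ throughout, in particular $\mu_J(\tau_2)>Q_0$. Assembling, $\mu_J>-Q_0-2$ on all of $[\tau_1,\tau_2]$, which both prevents $J$ from vanishing and establishes~\ref{boundontau12}.

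\textbf{Integral bounds and the main obstacle.}
For~\ref{tau2_finite} the crux is the integral bound. By Lemma~\ref{lem: comp_thm} applied to the first $\mathcal{D}_-$-piece $D_1=[s,s+L]$ with $L\ge R$,
\[
\int_{D_1}\mu_J\,dt \;\ge\; \ln\frac{u(s+R)}{u(s)} \;\ge\; (Q_0+3)+\frac{(Q_0+3)C_0}{Q_0+2}+\frac{2(Q_0+3)}{(Q_0+1)^2}+\ln\frac{2Q_0+5}{2Q_0+6},
\]
obtained by dropping the positive exponentially decaying term in $u(s+R)$ and substituting the value of $R$ from~(C4). Together with the $(a)$-deficit $-\tfrac{2}{Q_0+1}$ this strictly exceeds $Q_0+C_0+2$, proving~\ref{tau2_finite} in the minimal case $k=1$. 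For $k\ge 2$, each additional $(c)+D_i$ cycle again begins with $\mu_J>-Q_0-2$ and contributes non-negatively by the same analysis, and the terminal $(b)$-piece is non-negative too, so the bound only improves. Item~\ref{tau2_infinite} is then immediate: when $\tau_2=\infty$, either some $D_i$ is infinite (on which $\mu_J>Q_0+2$ past time $T$, forcing divergence) or there are infinitely many finite $D_i$, each contributing a uniform positive amount. The hard part is the bookkeeping of constants: $R$ in~(C4) is engineered as the ``lifting time'' $\tfrac{\ln(2Q_0+5)}{Q_0+3}$ plus two surcharges $\tfrac{C_0}{Q_0+2}$ and $\tfrac{2}{(Q_0+1)^2}$ designed respectively to absorb $C_0$ and to beat $\tfrac{2}{Q_0+1}$, and the curvature bound $-(Q_0+3)^2$ on $\mathcal{D}_-$ from~(C1)--(C2) is precisely what makes all these estimates close with strictly positive margin.
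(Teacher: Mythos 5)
Your proposal is correct and matches the paper's proof in both structure and mechanism: both decompose $c$ into alternating $\mathcal{C}_+$ and $\mathcal{D}_-$ pieces, control the collar pieces via Corollary~\ref{cor: mu_change_C+}, and control the $\mathcal{D}_-$ pieces by applying Lemma~\ref{lem: comp_thm} with the curvature bound $-(Q_0+3)^2$ together with the length lower bound $R$. The only cosmetic difference is that the paper packages the $\mathcal{D}_-$ integral bound as $(Q_0+2)\bigl(t-\tfrac{2}{Q_0+3}\tanh^{-1}\tfrac{Q_0+2}{Q_0+3}\bigr)$ coming from the Riccati-type solution, whereas you work directly with $\ln\bigl(u(s+R)/u(s)\bigr)$ for the linear comparison solution $u$; these are the same estimate (note $\tfrac{2}{Q_0+3}\tanh^{-1}\tfrac{Q_0+2}{Q_0+3}=\tfrac{\ln(2Q_0+5)}{Q_0+3}$), and both close with the same choice of constants in (C4).
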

\begin{proof}

Let $\{a_i\}^n_{i=0}$ and $\{b_i\}^n_{i=0}$ be the sequences of times with $\tau_1\leq a_0<b_0<\ldots a_n<b_n<a_{n+1}\leq \tau_2$ ($n$ and $\tau_2$ could be $\infty$) such that
$c[a_k, b_k] (k=0,1,\cdots,n)$ are the geodesic segments in $\mathcal{C}_+$, and $c(\tau_1,a_0)$, $c(b_k,a_{k+1})  (k=0,1,\cdots,n)$, and $c(a_{n+1},\tau_2)$ are contained in $\mathcal D_{-}$.  Since $c(\tau_1)\in \partial\Sigma_0\cup  \mathcal{D}_-$ and $-Q_0<\mu_J(\tau_1)<\infty$,   we have $\mu_J(a_0)\in (-Q_0, \infty)$.

By construction of $\Sigma^{ext}$, we know that for all $0\leq k\leq n,$
$$a_{k+1}-b_k>R.$$

Firstly, we prove that 

\begin{align}
\mu_J(b_k)>-Q_0-2\quad \Rightarrow \quad & \mu_J(t)>-Q_0-2 \text{ for }t\in[b_k,a_{k+1}],  \nonumber\\
&  \mu_J(a_{k+1})>Q_0+2\,\,\,\,\text{ and  } \label{eq: lower_bdd_bkak+1}\\
&  \int_{b_k}^{a_{k+1}}\mu_J(t)dt>Q_0+C_0+2+\frac{2}{Q_0+1}.\label{eq: lower_bdd_int_bkak+1}
\end{align}
Indeed, $c[b_k, a_{k+1}]\subseteq \mathcal{D}_-$ on which the sectional curvatures are bounded above by $-(Q_0+3)^2$. By Lemma \ref{lem: comp_thm}, we know that for $t\in[0, a_{k+1}-b_k]$,
$$\mu_J(b_k+t)>(Q_0+3)\tanh\left(t(Q_0+3)-\tanh^{-1}\frac{Q_0+2}{Q_0+3}\right)$$
By choosing $R$ sufficiently large,  $\mu_J(b_k+t)>-Q_0-2$ for $t\in[0,a_{k+1}-b_k]$. Moreover, we have $\mu_J(b_k+t)>Q_0+2$ for all $t\in \left[\frac{2}{Q_0+3}\tanh^{-1}\frac{Q_0+2}{Q_0+3}, a_{k+1}-b_k\right]$ and
\begin{equation}\label{eq: lower_bdd_int_bkbk+t}
\int_{b_k}^{b_k+t}\mu_J(\tau)d\tau>(Q_0+2)\left(t-\frac{2}{Q_0+3}\tanh^{-1}\frac{Q_0+2}{Q_0+3}\right).
\end{equation}
In particular, we have \eqref{eq: lower_bdd_bkak+1} and \eqref{eq: lower_bdd_int_bkak+1}. Together with Corollary \ref{cor: mu_change_C+}, we have the following two statements:
\begin{eqnarray}\label{eq: lower_bdd_akak+1}
\mu_J(a_k)>Q_0+2&\Rightarrow &\mu_J(t)>-Q_0-2 \text{ for }t\in[a_k,a_{k+1}], \\
& & \mu_J(a_{k+1})>Q_0+2 \text{ and  }\nonumber\\
& & \int_{a_k}^{a_{k+1}}\mu_J(t)dt>Q_0+C_0+2.
\end{eqnarray}
\begin{eqnarray}\label{eq_lower_bdd_bkbk+1}
\mu_J(b_k)>-Q_0-2 &\Rightarrow & \mu_J(t)>-Q_0-2 \text{ for }t\in[b_k,b_{k+1}], \\
& & \mu_J(b_{k+1})>-Q_0-2 \text{ and  }\nonumber\\
& & \int_{b_k}^{b_{k+1}}\mu_J(t)dt>Q_0+C_0+2.
\end{eqnarray}

\begin{figure}[t]
\centering
\includegraphics[scale=0.75]{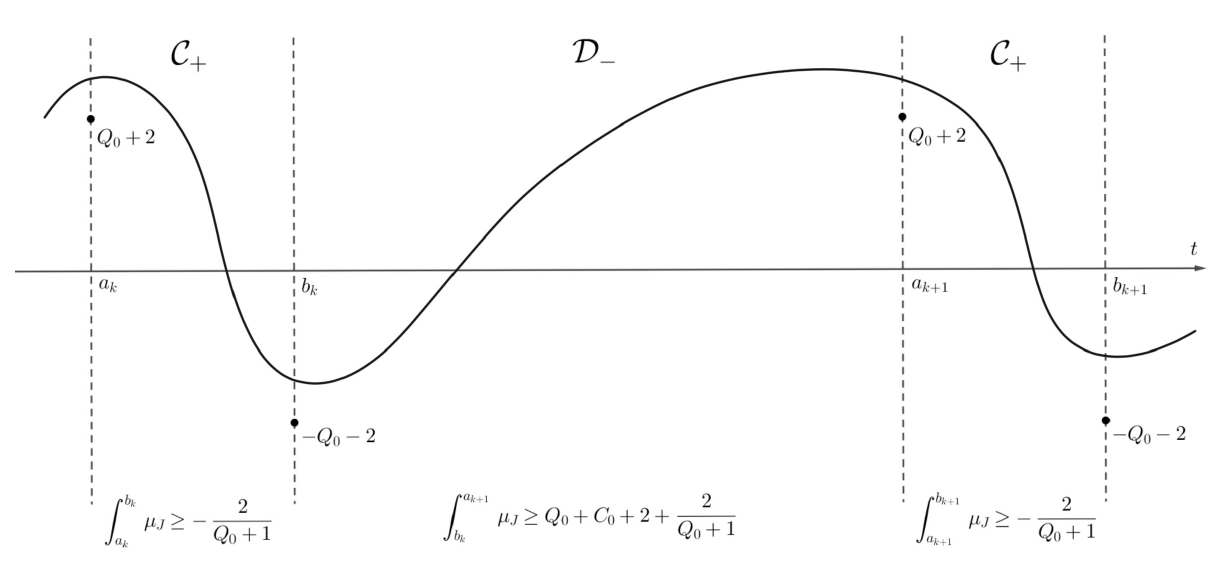}
\caption{Graph of $\mu_J$}
\label{fig: graph_of_mu}
\end{figure}

Now we make the estimate on the entire $[\tau_1,\tau_2]$. Since $\mu_J(\tau_1)>-Q_0$,  by Corollary \ref{cor: mu_change_C+}(a), we know that $\mu_J(b_0)\geq -Q_0-2$. By \eqref{eq: lower_bdd_bkak+1} and \eqref{eq: lower_bdd_akak+1}, we obtain \ref{boundontau12} and for any $k\geq 1$, $\mu_J(a_k)>Q_0+2$ and
$$\int_{\tau_1}^{\tau_2}\mu_J(\tau)d\tau>\int_{a_0}^{a_n}\mu_J(\tau)d\tau=\sum_{k=0}^{n-1}\int_{a_k}^{a_{k+1}}\mu_J(\tau)d\tau>n(Q_0+C_0+2).$$
Thus, when $n=\infty$, $\int_{\tau_1}^{\tau_2}\mu_J(\tau)d\tau=\infty$. When $\tau_2<\infty$, by \eqref{eq: lower_bdd_bkak+1} and \eqref{eq: lower_bdd_akak+1},  $\mu_J(a_n)>Q_0+2$, thus $\mu_J(\tau_2)>Q_0$ due to Corollary \ref{cor: mu_change_C+} and we obtain \ref{tau2_finite}. The only case left is when $n<\infty$ but $\tau_2=\infty$. In this case we apply (\ref{eq: lower_bdd_int_bkbk+t}) and obtain
$$\int_{\tau_1}^{\tau_2}\mu_J(\tau)d\tau=\int_{\tau_1}^{b_n}\mu_J(\tau)d\tau+\int_{b_n}^{\infty}\mu_J(\tau)d\tau=\infty.$$
\end{proof}

\begin{corollary}\label{cor: inf_at_NSigma}
Let $c: [\tau_1,\tau_2]\to \mathcal{C}_+\cup\mathcal{D}_-$ be a geodesic segment and $J$ be a nonzero perpendicular Jacobi field along $c$ with $J(t^*)=0$ for some $t^*\in(\tau_1,\tau_2)$, then $\mu_J>-Q_0-2$ on $(t^*,\tau_2]$. In particular, $J$ does not vanish on $(t^*,\tau_2]$.
\end{corollary}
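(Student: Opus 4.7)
My plan is to reduce to a one-sided version of Lemma~\ref{lem: mu_change_NSigma}, exploiting the fact that vanishing of $J$ at $t^*$ forces $\mu_J$ to blow up to $+\infty$ just to the right of $t^*$.

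First I would note that since $J \not\equiv 0$ and $J(t^*) = 0$, uniqueness of solutions to the Jacobi equation gives $J'(t^*) \neq 0$, and the equation itself yields $J''(t^*) = -R(\dot c, J)\dot c\big|_{t^*} = 0$. A short Taylor expansion then produces
\[
\mu_J(t) = \frac{g(J'(t), J(t))}{\|J(t)\|^2} = \frac{1}{t - t^*} + O(t - t^*) \longrightarrow +\infty \qquad \text{as } t \to t^{*+}.
\]
Hence I can fix $\eta > 0$ small enough that $\mu_J(t) > Q_0 + 2$ for all $t \in (t^*, t^* + \eta]$ and, by shrinking $\eta$ further if necessary, such that $c[t^*, t^* + \eta]$ lies in a single subregion ($\mathcal{C}_+$ or $\mathcal{D}_-$). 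Setting $t_0 := t^* + \eta$, the desired bound $\mu_J > -Q_0 - 2$ then holds trivially on $(t^*, t_0]$.

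On $[t_0, \tau_2]$ I would rerun the inductive argument from the proof of Lemma~\ref{lem: mu_change_NSigma}. The hypothesis $c(\tau_1) \in \partial \Sigma_0$ used there serves only to launch the induction with $\mu_J > -Q_0$ at entry into the first $\mathcal{C}_+^1$ piece; the implications~\eqref{eq: lower_bdd_bkak+1},~\eqref{eq: lower_bdd_akak+1},~\eqref{eq_lower_bdd_bkbk+1} then propagate the estimate along alternating $\mathcal{C}_+$ and $\mathcal{D}_-$ pieces without further reference to any boundary condition. Since $\mu_J(t_0) > Q_0 + 2$ is strictly stronger than the $-Q_0$ needed, the same induction runs from $t_0$: if $c(t_0) \in \mathcal{D}_-$ one applies the $-(Q_0+3)^2$ comparison from the derivation of~\eqref{eq: lower_bdd_bkak+1} to the (possibly truncated) $\mathcal{D}_-$ piece starting at $t_0$, while if $c(t_0) \in \mathcal{C}_+$ one applies Corollary~\ref{cor: mu_change_C+}(b) or (c) to its truncated starting piece; all subsequent segments are handled verbatim.

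Combining the two intervals yields $\mu_J > -Q_0 - 2$ throughout $(t^*, \tau_2]$, and since $(\log \|J\|^2)' = 2\mu_J$ is bounded below on this interval, $\|J\|^2$ cannot decay to zero, so $J$ has no further zero on $(t^*, \tau_2]$. The only mildly delicate point I anticipate is checking that the estimates in Corollary~\ref{cor: mu_change_C+} and the Riccati comparison in $\mathcal{D}_-$ still apply when the initial endpoint $t_0$ of a segment lies in the interior of a subregion rather than on a subregion boundary; this is routine, since both arguments require only a lower bound on $\mu_J$ at the starting time of the segment.
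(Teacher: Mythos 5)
Your proposal is correct and takes essentially the same route as the paper's proof. The paper splits into cases according to whether $t^*$ lies in a $\mathcal{C}_+$ piece or a $\mathcal{D}_-$ piece, invoking Corollary~\ref{cor: inf_at_C1} in the first case and Lemma~\ref{lem: comp_thm} in the second to reach the next subregion boundary with a strong lower bound on $\mu_J$, and then propagating via the machinery of Lemma~\ref{lem: mu_change_NSigma}\ref{boundontau12}; your version makes the common underlying mechanism explicit (the blow-up $\mu_J(t)\sim (t-t^*)^{-1}\to+\infty$ at a zero of $J$, which is precisely what both Corollary~\ref{cor: inf_at_C1} and the $\mathcal{D}_-$ Riccati comparison exploit) and thereby avoids the case split, at the small cost of having to note that the induction can launch from an interior point of a subregion rather than a subregion boundary, a point you correctly flag as routine.
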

\begin{proof}
If $t^*\in (a_k,b_k)$ for some $k$, then $\mu_J>Q_0$ on $(t^*,b_k]$ via Corollary \ref{cor: inf_at_C1}. Thus, $\mu_J>-Q_0-2$ on $(t^*,\tau_2]$ follows from Lemma \ref{lem: mu_change_NSigma}\ref{boundontau12}. If $t^*\in (b_k, a_{k+1})$ for some $k$, apply Lemma \ref{lem: comp_thm}  we have $\mu_J>Q_0+2$ on $(t^*, a_{k+1}]$. Finally, from Lemma \ref{lem: mu_change_NSigma}\ref{boundontau12}, we have $\mu_J>-Q_0-2$ on $(t^*,\tau_2]$. 
\end{proof}

\subsection{Proof of absence of conjugate points and of the main theorem}\label{section: absence of conjugate points and the main theorem}
In order to prove $(\Sigma^{ext}, g^{ext})$ is Anosov, we first prove the absense of conjugate points.
\begin{proposition}\label{prop: noconjpts}
The extension $(\Sigma^{ext}, g^{ext})$ has no conjugate points.
\end{proposition}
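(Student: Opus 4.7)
The plan is to argue by contradiction: suppose there exist conjugate points, so there is a geodesic $c\colon[0,T]\to\Sigma^{ext}$ and a nonzero perpendicular Jacobi field $J$ along $c$ with $J(0)=J(T)=0$, which we may assume is nowhere zero on $(0,T)$. Since $J(0)=0$, we have $\mu_J(t)\to+\infty$ as $t\to 0^+$, and since $J(T)=0$, we have $\mu_J(t)\to-\infty$ as $t\to T^-$. The goal is to use the region-by-region Jacobi estimates established in Section 8.2--8.3 to show that $\mu_J(t)>-Q_0-2$ on a full right-neighborhood of $0$ extending all the way up to $T$, yielding the desired contradiction with the divergence of $\mu_J$ at $T$.

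To execute the tracking, pick $\tau_0>0$ small enough that $\mu_J(\tau_0)>Q_0+2$. Because $\Sigma^{ext}$ is closed, $c$ has finite length on $[\tau_0,T]$, and $\partial\Sigma_0$ is smooth and strictly convex, so $c$ meets $\partial\Sigma_0$ transversally at only finitely many times. This partitions $[\tau_0,T]$ into finitely many maximal subintervals $[\sigma_i,\sigma_{i+1}]$ on each of which either $c\subset\Sigma_0$ or $c\subset\mathcal C_+\cup\mathcal D_-$. The two propagation rules that will alternate along this partition are:
\begin{enumerate}
\item[(i)] On a subinterval contained in $\mathcal C_+\cup\mathcal D_-$ with $\mu_J(\sigma_i)>-Q_0$, Lemma~\ref{lem: mu_change_NSigma} gives $\mu_J>-Q_0-2$ throughout and $\mu_J(\sigma_{i+1})>Q_0$.
\item[(ii)] On a subinterval contained in $\Sigma_0$ with $\mu_J(\sigma_i)>Q_0$, Proposition~\ref{prop: highent_highexit} applied to $\Sigma_0\subset \Sigma^{\delta_0}$ (with the same constants $Q_0$, $C_0$) gives $J\neq 0$ throughout and $\mu_J(\sigma_{i+1})>-Q_0$.
\end{enumerate}
Chaining these alternating conclusions starting from $\mu_J(\tau_0)>Q_0+2$ — so the first regime's hypothesis holds no matter which region $c(\tau_0)$ lies in — the lower bound $\mu_J>-Q_0-2$ propagates across every subinterval up to $T$. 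This contradicts $\mu_J(t)\to-\infty$.

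The main obstacle is the handling of the initial segment $(0,\tau_0]$, i.e.\ making sure the bound $\mu_J(\tau_0)>Q_0+2$ is actually realized at some $\tau_0>0$ consistent with the choice of starting region. If $c(0)\in\mathrm{int}(\Sigma_0)$ we may apply Proposition~\ref{prop: highent_highexit} directly to an interior starting point once $\mu_J$ has exceeded $Q_0$ (its proof only uses the hyperbolic trapped set analysis, not the fact that the starting vector is on $\partial_-$). If $c(0)\in\mathcal C_+\cup\mathcal D_-$, then $t=0$ plays the role of $t^*$ in Corollary~\ref{cor: inf_at_NSigma}, whose proof in fact shows that $\mu_J>Q_0$ on the short segment immediately after $t=0$ (in $\mathcal C_+^{1}$ via Corollary~\ref{cor: inf_at_C1}, in $\mathcal D_-$ via the comparison Lemma~\ref{lem: comp_thm}), so one can pick $\tau_0>0$ still in the same excursion with $\mu_J(\tau_0)>Q_0+2$ and then invoke (i). The delicate point here is checking that the numerical constants from Section~\ref{subsec: description}, especially (C1)--(C4), are compatible across every transition — this is precisely why the parameters were tuned so that the tight gaps between $Q_0$ and $-Q_0-2$ close consistently in both (i) and (ii).
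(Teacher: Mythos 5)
Your proposal is fundamentally the same region-by-region chaining that the paper uses (alternating Proposition~\ref{prop: highent_highexit} on $\Sigma_0$-segments with Lemma~\ref{lem: mu_change_NSigma} and its corollaries outside), just packaged as a proof by contradiction rather than the paper's direct forward-propagation from a zero of $J$. That framing is equivalent and not a real novelty: showing $J(t^*)=0\Rightarrow J(t)\neq 0$ for $t>t^*$ is the same as ruling out $J(0)=J(T)=0$.

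Where you cut a corner is the initial segment when $c(0)\in\mathrm{int}(\Sigma_0)$. You assert that ``Proposition~\ref{prop: highent_highexit} applies directly to an interior starting point \ldots\ its proof only uses the hyperbolic trapped set analysis, not the fact that the starting vector is on $\partial_-$.'' That is not something the paper establishes, and it is not obvious: the proof of Proposition~\ref{prop: highent_highexit} goes through Proposition~\ref{prop: big_unstable_comp}, which is stated for $v\in\partial_-$ and relies on the $[0,T_0]$ lead-in to place $\phi_{T_0}(v)$ in the neighborhood $\mathcal U$ of the trapped set. For an interior start one must separately check that the decomposition at time $T_0$ is still available (which needs the forward orbit to persist in $\Sigma_0$ at least $2T_0$) and re-run the compactness argument with interior base vectors; Corollary~\ref{cor: bdd_time_bdd_mu} alone only gives a $\tau$-dependent bound, so it cannot substitute when the first exit time $\sigma_1$ is large. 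The paper avoids all of this with a clean time-reversal: if $\mu_J$ at the first exit were too negative, reversing time produces a perpendicular Jacobi field entering $\Sigma_0$ from $\partial_-$ with $\mu>Q_0$ that nonetheless vanishes inside $\Sigma_0$, contradicting Proposition~\ref{prop: highent_highexit} as stated. You should replace your shortcut by that time-reversal argument (or supply the re-derivation you gesture at); otherwise the chain is not anchored when $c(0)$ starts strictly inside $\Sigma_0$.

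One smaller imprecision: ``the lower bound $\mu_J>-Q_0-2$ propagates across every subinterval'' is not literally what you get on the $\Sigma_0$-segments — there Proposition~\ref{prop: highent_highexit} gives only $J\neq 0$ throughout and a bound at the exit time, not an interior pointwise bound on $\mu_J$. This does not damage the contradiction (you still get $J(T)\neq 0$ or $\mu_J(T)>-Q_0-2$ depending on which region $T$ falls in), but the statement should be phrased accordingly.
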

\begin{proof}
We need to prove that for any geodesic $\gamma_v$ and perpendicular Jacobi field $J$ along $\gamma_v$, if $J(t^*)=0$, then $J(t)\neq 0$ for all $t>t^*$. Assume $t^-_1\leq t^+_1<t^-_2\leq t^+_2<\cdots$ are the times when $\gamma_v$ crosses $\partial\Sigma_0$ and we assume that $\gamma_v[t^-_k, t^+_k], k\in\mathbb{Z}$ are the segments within $\Sigma_0$. 

\begin{lemma}\label{lem: t*<t+n}
For any $n$ with $t^+_n>t^*$, we have $\mu_J(t^+_{n})>-Q_0$ and $J$ does not vanish on $(t^*, t^+_n]$.
\end{lemma}
\begin{proof}
Firstly, we prove the statement for the first $n$ with $t^+_n>t^*$.  If $t^*\in [t^-_k, t^+_k]$ for some $k$ then $\mu_J(t^+_k)>-Q_0$, otherwise by reversing time we obtain a Jacobi field, $J^*$, entering $\Sigma_0$ with $\mu_{J^*}>Q_0$ but vanishing within $\Sigma_0$, contradicting Proposition \ref{prop: highent_highexit}. If $t^*\in (-\infty, t^-_1]$ then $\mu_J(t^-_{1})>Q_0$ via Corollary \ref{cor: inf_at_C1}. Thus, $\mu_J(t^+_{1})>-Q_0$ by Proposition \ref{prop: highent_highexit}. Similar argument can be applied  when $t^*\in [t^+_k, t^-_{k+1}]$ to obtain $\mu_J(t^+_{k+1})>-Q_0$.

For general $n$, notice that $\mu_J(t^+_{n})>-Q_0$ implies $\mu_J(t^+_{n+1})>-Q_0$ due to Lemma \ref{lem: mu_change_NSigma}\ref{tau2_finite} and Proposition \ref{prop: highent_highexit}.
\end{proof}

We finish the proof of the proposition by considering the cases for the sequence of times $\{t_i^{\pm}\}$.

{\bf Case 1:} The sequence $\{t_i^{\pm}\}$ is empty. This means that $\gamma_v$ never enters $\Sigma_0$. Then the non-vanishing property of $J$ follows from Corollary \ref{cor: inf_at_NSigma}.

{\bf Case 2:} The sequence $\{t_i^{\pm}\}$ never ends. In this case $J$ does not vanish for $t>t^*$ due to Lemma \ref{lem: t*<t+n}.

{\bf Case 3:} The sequence $\{t_i^{\pm}\}$ ends with some $t^+_m$. If $t^*<t^+_m$ then by Lemma \ref{lem: t*<t+n} we have $\mu_J(t^+_{m})>-Q_0$ and Lemma \ref{lem: mu_change_NSigma} tells us that $J$ does not vanish after $t^+_m$.  If $t^*\geq t^+_m$ then Lemma \ref{lem: mu_change_NSigma} can be applied again to show that $J$ does not vanish for $t>t^*$. 

{\bf Case 4:} The sequence $\{t_i^{\pm}\}$ ends with some $t^-_m$. In this case $\gamma_v$ ends up in $\Gamma_-$ at time $t^-_m$. If $t^*<t^+_{m-1}$, then $\mu_J(t^+_{m-1})>-Q_0$ by Lemma \ref{lem: t*<t+n} and thus $\mu_J(t^-_{m})>Q_0$ by Lemma \ref{lem: mu_change_NSigma}. Therefore $J$ does not vanish after $t^-_{m}$ due to Proposition \ref{prop: highent_highexit}. If $t^+_{m-1}\leq t^*<t^-_{m}$, then we again have $\mu_J(t^-_{m})>Q_0$ by Corollary \ref{cor: inf_at_NSigma}. If $t^*\geq t^-_{m}$, then $J$ does not vanish after $t^*$ since $\Sigma_0$ has no conjugate points.
\end{proof}

Now we are ready to prove the geodesic flow on $(\Sigma^{ext}, g^{ext})$ is Anosov.

\begin{proof}[Proof of Theorem \ref{thm_anosov_ext}]

By Theorem \ref{EberleinAnosov} and Proposition \ref{prop: noconjpts}, in order to show the geodesic flow is Anosov, it suffices to prove that all non-zero perpendicular Jacobi fields on a manifold without conjugate points are unbounded. 

If a geodesic $\gamma_v$ stays in $\Sigma_0$ for all $t\in\mathbb{R}$, then $v\in\Lambda$. Thus any Jacobi field along $\gamma_v$ is unbounded by hyperbolicity. Therefore it remains to consider the case when $\gamma_v$ passes through $\mathcal{D}_-$. Let $J$ be a Jacobi field along $\gamma_v$. By changing the starting time we may assume that the geodesic segment $\gamma_v|_{[-R/2,R/2]}$ lies within $\mathcal{D}_-$. We can also assume that $J(0)\neq 0$ and $\mu_J(0)\geq 0$ (otherwise we can replace $v$ with $-v$). We will show that $\|J\|(t)\to\infty$ as $t\to\infty$. 

Recall that $\mu_J=\|J\|'/\|J\|$, hence we have only to prove the integral of $\mu_J$ is unbounded on $[0,+\infty)$.
As before denote by $0<t^-_1\leq t^+_1<t^-_2\leq t^+_2<\cdots$ the moments $\gamma_v$ crosses $\partial\Sigma_0$ with $\gamma_v[t^-_k, t^+_k], k\in\mathbb{Z}$ being the segments within $\Sigma_0$. 

{\bf Case 1:} Geodesic $\gamma_v$ never enters $\Sigma_0$ on $t\geq 0$. We decompose $\gamma_v[0,+\infty)$ using $0<a_1<b_1<a_2<b_2<\cdots$ as in the proof of Lemma \ref{lem: mu_change_NSigma}. If $a_1=+\infty$, by Lemma \ref{lem: comp_thm} we know that $\|J\|$ is unbounded. Now we assume $a_1,b_1<+\infty$, by  Lemma \ref{lem: comp_thm} again we have $\mu_J(a_1)>Q_0+2$ thus $\mu_J(b_1)>-Q_0-2$ by Corollary \ref{cor: mu_change_C+}. The unboundedness of $\|J\|$ is a consequence of Lemma \ref{lem: mu_change_NSigma}(ii).

{\bf Case 2:} Geodesic $\gamma_v$ enters $\Sigma_0$ infinitely many times on $t\geq 0$. Since $t^-_1=b_l$ for some $l\geq 1$, the argument as in Case 1 can be carried out to obtain $\mu_J(t^-_1)>Q_0$. Then we proceed by induction to get $\mu_J(t^-_k)>Q_0$ and $\mu_J(t^+_k)>-Q_0$ for all $k\geq 1$. Moreover  Proposition \ref{prop: highent_highexit} implies that 
$$\int_{t^-_k}^{t^+_k}\mu_J(t)dt\geq -C_0.$$
 For each $k$, by Lemma \ref{lem: mu_change_NSigma}(i) we have 
 $$\int_{t^+_k}^{t^-_{k+1}}\mu_J(t)dt\geq Q_0+C_0+2$$
   hence 
   $$\int_{t^-_k}^{t^-_{k+1}}\mu_J(t)dt\geq Q_0+2.$$ Thus the integral of $\mu_J$ is unbounded. 

{\bf Case 3:} The sequence $\{t_i^{\pm}\}$ ends with some $t^+_m$. The argument in Case 2 implies $\mu_J(t^+_m)>-Q_0$. The norm $\|J\|$ is unbounded by Lemma \ref{lem: mu_change_NSigma}(ii).

{\bf Case 4:} The sequence $\{t_i^{\pm}\}$ ends with some $t^-_m$. The argument in Case 2 implies $\mu_J(t^-_m)>Q_0$. Notice that in this case $\gamma_v[t^-_m,+\infty)$ lies in $\Sigma$. Thus Proposition \ref{prop: highent_highexit}(1) tells us that $\|J\|$ is unbounded.

Hence, for any $v\in S\Sigma^{ext}$, all nonzero perpendicular Jacobi fields along $\gamma_v$ are unbounded. Thus we have finished the proof of Theorem \ref{thm_anosov_ext}.
\end{proof}

\appendix
\section{Estimates on the curvature tensor}

Throughout this section we use notations from Section~\ref{construction 1}.

\subsection{The curvature tensor for the deformation to negative sectional curvature}

For any $\theta_0\in S$, let $\{e_i\}_{i=1}^{n-1}$ be the an orthonormal basis of $g_0$ such that $h(e_i,e_i)=2\lambda_{i}(0,\theta)$. Consider normal coordinates $\{x_i\}_{i=1}^{n-1}$ on $S$ for $g_0$ in a neighborhood of $(0,\theta_0)$ such that $\frac{\partial}{\partial x_i}|_{(0,\theta_0)}=e_i$. For notational convenience we denote by $x_0:=t$. 

\begin{lemma}[The above setting, also see Section~5.1]\label{lem: bdd_curv_tensor_C12}
We use the setting described in this section. Let $\eps>0$. Consider the manifold $[0,1+\eps]\times S$ with Riemannian metric $\tilde g_{\ell,\eps}=dt^2+\tilde g_t$ where
\begin{equation*}
\tilde g_t = \rho(t-\eps)g_0+f_\ell(t)h \quad \text{for all}\quad t\in [0,1+\eps].
\end{equation*}

Then, there exists a constant $C_1=C_1(g, \rho)$ such that for any $i,j,k\in\{1,...,n-1\}$ and $(t_0,\theta_0)\in[0,1+\eps]\times S$,
\begin{equation*}
|R^0_{ijk}(t_0,\theta_0)|<C_1f'_\ell(t_0),
\end{equation*}
where $R^0_{ijk} = \langle R_{\ell,\eps}(\frac{\partial}{\partial x_j},\frac{\partial}{\partial x_k})\frac{\partial}{\partial x_i},\frac{\partial}{\partial x_0}\rangle$ is the coefficient of the Riemann curvature tensor with respect to $\{x_i\}-$coordinates.
\end{lemma}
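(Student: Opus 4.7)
The plan is to expand $R^0_{ijk}$ via the standard Christoffel formula and show term by term that each contribution is bounded by a constant (depending only on $g$ and $\rho$) times $f'_\ell(t_0)$. The key simplifying features are the block structure of $\tilde g_{\ell,\eps}$ (no $dt\,dx_i$ cross terms) and the choice of normal coordinates for $g_0$ at $\theta_0$, which annihilates the first spatial derivatives of $(g_0)_{ij}$ at the base point.

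First I would record that $g_{00}=1$ and $g_{0i}=0$ force $R^0_{ijk}=R_{0ijk}$, and the only nonvanishing Christoffel symbols involving the index $0$ are $\Gamma^0_{ab}=-\tfrac12\partial_0 g_{ab}$ and $\Gamma^c_{0b}=\tfrac12 g^{cn}\partial_0 g_{nb}$ for spatial $a,b,c$. At $(t_0,\theta_0)$ the normal coordinate choice gives $(g_0)_{ij}(\theta_0)=\delta_{ij}$ with vanishing first spatial derivatives, and the basis diagonalizes $h$: $h_{ij}(\theta_0)=2\lambda_i(0,\theta_0)\delta_{ij}$. Consequently $\partial_0 g_{ij}(t_0,\theta_0)=[\rho'(t_0-\eps)+2f'_\ell(t_0)\lambda_i(0,\theta_0)]\delta_{ij}$, and the mixed second derivatives reduce to $\partial_j\partial_0 g_{ki}(t_0,\theta_0)=f'_\ell(t_0)\partial_j h_{ki}(\theta_0)$, since the $\rho$-part is killed by the normal coordinate condition.

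Next I would substitute these into
\[
R^0_{ijk}=\partial_j\Gamma^0_{ki}-\partial_k\Gamma^0_{ji}+\Gamma^0_{jm}\Gamma^m_{ki}-\Gamma^0_{km}\Gamma^m_{ji}.
\]
The two derivative terms yield $\pm\tfrac12 f'_\ell(t_0)\partial h(\theta_0)$, each bounded by $C\,\|\partial h\|_{C^0(S)}\,f'_\ell(t_0)$. For the quadratic terms, $\Gamma^0_{jm}(t_0,\theta_0)$ is diagonal with entries $-\tfrac12[\rho'(t_0-\eps)+2f'_\ell(t_0)\lambda_m(0,\theta_0)]$, while $\Gamma^m_{ki}(t_0,\theta_0)$ carries the factor $f_\ell(t_0)/[\rho(t_0-\eps)+2f_\ell(t_0)\lambda_m(0,\theta_0)]$, which is bounded by $(2\lambda_{\min}(S_0))^{-1}$ uniformly in $\ell$. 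Multiplying, each quadratic product is dominated by $C(1+f'_\ell(t_0))$, and since $f'_\ell(t)=e^{\ell t}\geq 1$ on $[0,1+\eps]$ the additive constant is absorbed into $Cf'_\ell(t_0)$.

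The main obstacle is keeping the index combinatorics honest: one must check that in every summand any $\ell$-dependent factor is either uniformly bounded (via the ratio above) or linear in $f'_\ell$ (via $\partial_0 g$). Once this is verified, the final constant $C_1$ assembles from $\|\partial h\|_{C^0(S)}$, $\|\rho'\|_{C^0}$, and $\lambda_{\min}(S_0)$---all determined by $g$ and $\rho$---yielding the claimed bound.
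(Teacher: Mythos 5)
Your proposal is correct and follows essentially the same route as the paper's proof: normal coordinates for $g_0$ at $\theta_0$ kill the spatial derivatives of $(g_0)_{ij}$ so that $\partial_j\Gamma^0_{ki}$ is $O(f'_\ell)$, the uniform bound $f_\ell/(\rho+2f_\ell\lambda_i)\le (2\lambda_{\min}(S_0))^{-1}$ controls the spatial Christoffel symbols, and the $\rho'$-contribution is absorbed via $f'_\ell\ge 1$. The only cosmetic difference is that the paper packages $\|\partial h\|_{C^0}$ invariantly as $D_h=\max(\nabla^0_{u_k}h)(u_i,u_j)$ over orthonormal frames of $g_0$.
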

\begin{proof}
Let $(g_0)_{ij}=g_0\left(\frac{\partial}{\partial x_i},\frac{\partial}{\partial x_j}\right)$ and $h_{ij}=h\left(\frac{\partial}{\partial x_i},\frac{\partial}{\partial x_j}\right)$. Recall that $x_0=t$ and $\{x_i\}_{i=1}^{n-1}$ are normal coordinates near $(0,\theta_0)$  such that $\frac{\partial}{\partial x_i}|_{(0,\theta_0)}=e_i$. We have
\begin{align}\label{eq: normal_coordinates_negative}
&(g_0)_{ij}(0,\theta_0)=\delta_{ij}, \qquad h_{ij}(0,\theta_0) = 2\lambda_i(0,\theta_0)\delta_{ij},\\
&\frac{\partial}{\partial x_k}(g_0)_{ij}(0,\theta_0)=0 \quad \text{and} \quad \nabla^0_{e_i}\frac{\partial}{\partial x_j}=0 \text{ for all } i,j,k\in\{1,\ldots,n-1\},  \nonumber
\end{align}

Moreover, the metric tensor of $\tilde g_{\ell,\eps}$ in coordinates $\{x_0, x_1,\ldots, x_{n-1}\}$ defined in a neighborhood $O_{t_0,\theta_0}$ of $(t_0,\theta_0)$ on $[0,1+\eps]\times S$ has the following entries:
\begin{align}\label{metric_tensor_negative}
&\bar g_{00} = \tilde g_{\ell,\eps}(T, T)=1, \,\,\,\,\,\,\bar g_{0j} = \bar g_{j0}=\tilde g_{\ell,\eps}\left(T,\frac{\partial}{\partial x_j}\right)=0 \text{ for all } j\in\{1,\ldots,n-1\},\\
&\bar g_{ij} = \tilde g_{\ell,\eps}\left(\frac{\partial}{\partial x_i},\frac{\partial}{\partial x_j}\right) = \rho(t-\eps)(g_0)_{ij}+f_\ell(t)h_{ij} \text{ for all } i,j\in\{1,\ldots, n-1\}.\nonumber
\end{align}
Thus, using \eqref{eq: normal_coordinates_negative}, for any $i,j,k\geq 1$, the Christoffel symbols $\Gamma^0_{ij}$ for $\tilde g_{\ell,\eps}$ in $O_{(t_0,\theta_0)}$ and their partial derivatives are
\begin{eqnarray*}
\Gamma^0_{ij}(t,\theta) &=& \frac{1}{2}\bar g^{00}\left(\frac{\partial}{\partial x_j}\bar g_{i0}+\frac{\partial}{\partial x_i}\bar g_{j0}-\frac{\partial}{\partial t}\bar g_{ij}\right) (t,\theta)= -\frac{1}{2}\frac{\partial}{\partial t}\bar g_{ij} (t,\theta)\\
&=&-\frac{1}{2}\rho'(t-\eps)(g_0)_{ij} (0,\theta)-\frac{1}{2}f'_\ell(t)h_{ij} (0,\theta);
\end{eqnarray*}
$$\frac{\partial}{\partial x_k}\Gamma^0_{ij}(t,\theta)=-\frac{1}{2}\rho'(t-\eps)\frac{\partial}{\partial x_k}(g_0)_{ij}(0,\theta)-\frac{1}{2}f'_\ell(t)\frac{\partial}{\partial x_k}h_{ij}(0,\theta). $$
In particular, at $(t_0,\theta_0)$, they are
$$\Gamma^0_{ij}(t_0,\theta_0) = \left(-\frac{1}{2}\rho'(t_0-\eps)-f'_\ell(t_0)\lambda_i(0,\theta_0)\right)\delta_{ij};$$
$$\frac{\partial}{\partial x_k}\Gamma^0_{ij}(t_0,\theta_0)=-\frac{1}{2}f'_\ell(t_0)\frac{\partial}{\partial x_k}h_{ij}(0,\theta_0) = -\frac{1}{2}f'_\ell(t_0)\left(\nabla^0_{e_k} h\right)(e_i,e_j),$$
where $\nabla^0$ is the covariant derivative of tensor at $S$. 

For general $\Gamma^i_{jk}$, by \eqref{metric_tensor_negative}, we have
$$\frac{\partial}{\partial x_i}\bar g_{jk}(t_0,\theta_0)=\rho(t_0-\eps)\frac{\partial}{\partial x_i}(g_0)_{jk}(0,\theta_0)+f_\ell(t_0)\frac{\partial}{\partial x_i}h_{jk} (0,\theta_0)=f_\ell(t_0)(\nabla^0_{e_i}h)(e_j,e_k),$$
Thus, for all $i,j,k\geq 1$,
\begin{align*}
\Gamma^i_{jk}(t_0,\theta_0) &= \frac{1}{2}\bar g^{il}\left(\frac{\partial}{\partial x_k}\bar g_{lj}+\frac{\partial}{\partial x_j}\bar g_{lk}-\frac{\partial}{\partial x_l}\bar g_{jk}\right)(t_0,\theta_0) \\
&= \frac{1}{2}\bar g^{ii}\left(\frac{\partial}{\partial x_k}\bar g_{ij}+\frac{\partial}{\partial x_j}\bar g_{ik}-\frac{\partial}{\partial x_i}\bar g_{jk}\right)(t_0,\theta_0) \\\
&=\frac{f_\ell(t_0)}{2}\frac{(\nabla^0_{e_k}h)(e_i,e_j)+(\nabla^0_{e_j}h)(e_i,e_k)-(\nabla^0_{e_i}h)(e_j,e_k)}{\rho(t_0-\eps)+2f_\ell(t_0)\lambda_i(0,\theta_0)}.
\end{align*}
Let 
$$D_h := \max\limits_{ i,j,k\in\{1,\ldots, n-1\}}\left\{(\nabla^0_{u_k}h)(u_i,u_j) \big|\{u_l\}_{l=1}^{n-1} \text{ is an orthonormal basis of }g_0\right\}.$$
 Then, we have $\left|\frac{\partial}{\partial x_k}\Gamma^0_{ij}(t_0,\theta_0)\right|\leq D_hf'_\ell(t_0)/2$ and
$$|\Gamma^i_{jk}(t_0,\theta_0)|\leq \frac{3f_\ell(t_0)D_h}{2(\rho(t_0-\eps)+2f_\ell(t_0)\lambda_i(0,\theta_0))}\leq \frac{3D_h}{4\lambda_{\min}(S_0)}.$$
Since $f'_\ell(t_0)\geq 1$, we have
\begin{align*}
|R^0_{ijk}(t_0,\theta_0)| &= \left|\frac{\partial}{\partial x_j}\Gamma^0_{ki}-\frac{\partial}{\partial x_k}\Gamma^0_{ji}+\Gamma^l_{ki}\Gamma^0_{jl}-\Gamma^l_{ji}\Gamma^0_{kl}\right|(t_0,\theta_0)\\
&= \left|\frac{\partial}{\partial x_j}\Gamma^0_{ki}-\frac{\partial}{\partial x_k}\Gamma^0_{ji}+\Gamma^j_{ki}\Gamma^0_{jj}-\Gamma^k_{ji}\Gamma^0_{kk}\right|(t_0,\theta_0)\\
&\leq D_hf'_\ell(t_0)+\frac{3D_h}{4\lambda_{\min}(S_0)}(\|\rho\|_{C^1}+2f'_\ell(t_0)\lambda_{\max}(S_0))\\
&\leq \left[D_h+\frac{3D_h}{4\lambda_{\min}(S_0)}\left(\|\rho\|_{C^1}+2\lambda_{\max}(S_0)\right)\right]f'_\ell(t_0)=:C_1f'_\ell(t_0).
\end{align*}
Thus, we finish the proof of Lemma \ref{lem: bdd_curv_tensor_C12}.
\end{proof}

\subsection{The curvature tensor for the ``rounding" deformation}

For any $\theta\in S$, let $\{e_i\}_{i=1}^{n-1}$ be an orthonormal basis of $h$ such that $\hat h(e_i,e_j)=\delta_{ij}\mu_i(\theta)\delta_{ij}$. Let $\mu_{\max}(S) = \max\{\mu_i(\theta)| i\in\{1,\ldots, n-1\}, \theta\in S\}$ and $\mu_{\min}(S) = \min\{\mu_i(\theta)| i\in\{1,\ldots, n-1\}, \theta\in S\}$. Consider normal coordinates $\{x_i\}_{i=1}^{n-1}$ on $S$ for $h$ in a neighborhood of $(0,\theta_0)$ such that $\frac{\partial}{\partial x_i}|_{(0,\theta_0)}=e_i.$ For notational convenience, we again denote $x_0:=t$.

\begin{lemma}\label{lem: bdd_curv_tensor_C3}
We use the setting described in this section. Let $\eps>0$. Consider the product $[0,1+\eps]\times S$ with Riemannian metric $\hat g_{\ell,\eps}=dt^2+\hat g_t$ where 
$$\hat g_t=f_\ell(t+1+\eps)\left(\rho(t)h+(1-\rho(t))\hat h\right),\,\,\,\,\,t\in[0,1+\eps].$$
 Let 
\begin{equation}\label{M_rho and D_h}
M'_{\rho} = \max\limits_{\tau\in\mathbb R}|\rho'(\tau)|
\quad\text{and} \quad D_{\hat h} = \max\left\{(\nabla^h_{u_k}\hat h)(u_i,u_j)\right\},
\end{equation} 
where the maximum in the definition of $D_{\hat h}$ is taken over $i,j,k\in\{1,\ldots, n-1\}$ and an orthonormal basis $\{u_l\}_{l=1}^{n-1}$ of $h$ which also diagonalizes $\hat h$.

Then, for any $i,j,k\in\{1,\ldots,n-1\}$ and $(t_0,\theta_0)\in[0,1+\eps]\times S$,
\begin{equation*}
\left|R^0_{ijk}(t_0,\theta_0)\right|\leq D_{\hat h}\left(1+\frac{2(1+\mu_{\max}(S))}{2\mu_{\min}(S)}\right)\left(f'_\ell(t_0+1+\eps)+f_\ell(t_0+1+\eps)M'_\rho\right).
\end{equation*}
where $R^0_{ijk} = \langle R_{\ell,\eps}\left(\frac{\partial}{\partial j},\frac{\partial}{\partial x_k}\right)\frac{\partial}{\partial x_i},\frac{\partial}{\partial x_0}\rangle$ is the coefficient of the Riemann curvature tensor with respect to $\{x_i\}-$coordinates.
\end{lemma}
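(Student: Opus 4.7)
The plan is to follow the template of the proof of Lemma~\ref{lem: bdd_curv_tensor_C12} essentially verbatim, with the computation of Christoffel symbols adapted to the new form of the conformal-type metric $\hat g_t = f_\ell(t+1+\eps)\bigl(\rho(t)h+(1-\rho(t))\hat h\bigr)$. The coordinates are chosen so that at $(0,\theta_0)$ we have $h_{ij}(0,\theta_0)=\delta_{ij}$, $\hat h_{ij}(0,\theta_0)=\mu_i(\theta_0)\delta_{ij}$, and $\frac{\partial}{\partial x_k}h_{ij}(0,\theta_0)=0$ (because $\{x_i\}$ are normal coordinates for $h$). Hence all first derivatives of the coefficient matrix come from derivatives of $\hat h$, which is where $D_{\hat h}$ enters, together with derivatives of the bump $\rho$, where $M'_\rho$ enters.

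First I would write out the metric tensor $\bar g_{00}=1$, $\bar g_{0j}=0$, and $\bar g_{ij}=f_\ell(t+1+\eps)\bigl(\rho(t)h_{ij}+(1-\rho(t))\hat h_{ij}\bigr)$, and then compute $\Gamma^0_{ij}=-\tfrac12\partial_t\bar g_{ij}$. Differentiating, $\partial_t\bar g_{ij}$ decomposes as an $f'_\ell$-term plus an $f_\ell\rho'$-term, so evaluating at $(t_0,\theta_0)$ gives $\Gamma^0_{ij}$ bounded by a constant multiple of $f'_\ell+f_\ell M'_\rho$ on the diagonal (and zero off diagonal, since $h$ and $\hat h$ are simultaneously diagonalized there). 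Next I would compute $\partial_{x_k}\Gamma^0_{ij}$ at $(t_0,\theta_0)$: because $\partial_{x_k}h_{ij}(0,\theta_0)=0$, only $\partial_{x_k}\hat h_{ij}(0,\theta_0)=(\nabla^h_{e_k}\hat h)(e_i,e_j)$ survives, giving a bound of the form $\tfrac12\bigl(f'_\ell(t_0+1+\eps)(1-\rho(t_0))+f_\ell(t_0+1+\eps)|\rho'(t_0)|\bigr)D_{\hat h}$.

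For the Christoffel symbols $\Gamma^i_{jk}$ with all upper/lower indices between $1$ and $n-1$, I would use $\bar g^{ii}(t_0,\theta_0)=1/\bigl(f_\ell(t_0+1+\eps)(\rho(t_0)+(1-\rho(t_0))\mu_i(\theta_0))\bigr)$ and the fact that $\partial_{x_l}\bar g_{jk}(t_0,\theta_0)=f_\ell(t_0+1+\eps)(1-\rho(t_0))(\nabla^h_{e_l}\hat h)(e_j,e_k)$. The $f_\ell$ factors cancel against $\bar g^{ii}$, producing a bound
\[
|\Gamma^i_{jk}(t_0,\theta_0)|\le \frac{3D_{\hat h}(1-\rho(t_0))}{2\bigl(\rho(t_0)+(1-\rho(t_0))\mu_i(\theta_0)\bigr)}\le \frac{3D_{\hat h}}{2\mu_{\min}(S)},
\]
which is bounded independently of $\ell$ and $t_0$.

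Finally I would plug these into
\[
R^0_{ijk}=\partial_{x_j}\Gamma^0_{ki}-\partial_{x_k}\Gamma^0_{ji}+\Gamma^l_{ki}\Gamma^0_{jl}-\Gamma^l_{ji}\Gamma^0_{kl},
\]
noting that $\Gamma^0_{jl}$ is diagonal in $j,l$ at $(t_0,\theta_0)$, so the quadratic terms reduce to $\Gamma^j_{ki}\Gamma^0_{jj}-\Gamma^k_{ji}\Gamma^0_{kk}$. Collecting the linear $\partial\Gamma^0$ contributions with the quadratic $\Gamma\cdot\Gamma^0$ contributions and using $|\Gamma^0_{jj}|\le \tfrac12(f'_\ell+f_\ell M'_\rho)(1+\mu_{\max}(S))$ yields the stated bound
\[
|R^0_{ijk}(t_0,\theta_0)|\le D_{\hat h}\Bigl(1+\tfrac{2(1+\mu_{\max}(S))}{2\mu_{\min}(S)}\Bigr)\bigl(f'_\ell(t_0+1+\eps)+f_\ell(t_0+1+\eps)M'_\rho\bigr).
\]
The only mild obstacle is bookkeeping: one must carefully separate the contributions coming from $f'_\ell$ (arising whenever $\partial_t$ hits the overall $f_\ell$) from those coming from $f_\ell M'_\rho$ (arising whenever $\partial_t$ hits $\rho$), and verify that the $f_\ell$ factor in the denominator (from $\bar g^{ii}$) cancels the $f_\ell$ factor in $\partial_{x_l}\bar g_{jk}$, so that the ``tangential'' Christoffel symbols $\Gamma^i_{jk}$ remain uniformly bounded in $\ell$. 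No substantive difficulty arises beyond that.
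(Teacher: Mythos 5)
Your proof takes essentially the same route as the paper: compute the metric tensor in the product coordinates $\{t,x_1,\ldots,x_{n-1}\}$ with $\{x_i\}$ normal for $h$ at $\theta_0$, read off $\Gamma^0_{ij}$, $\partial_{x_k}\Gamma^0_{ij}$, and $\Gamma^i_{jk}$, observe that the $f_\ell$ factors cancel in the tangential Christoffel symbols while $f'_\ell$ and $f_\ell\rho'$ contributions remain in $\Gamma^0_{ij}$, and then assemble $R^0_{ijk}$ from these. The paper does not spell out the final bookkeeping (it passes directly from the Christoffel expressions to the stated bound), and your constants in the quadratic term may differ slightly from the paper's stated $1+\tfrac{2(1+\mu_{\max})}{2\mu_{\min}}$ depending on how one bounds $\tfrac{1-\rho}{\rho+(1-\rho)\mu_i}$ and $\Gamma^i_{jk}$, but this is a cosmetic issue — what matters for the application is that the bound has the form $C\cdot D_{\hat h}\cdot\bigl(f'_\ell+f_\ell M'_\rho\bigr)$ with $C$ independent of $\ell$ and $t_0$, which both arguments deliver.
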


\begin{proof}
Let $h_{ij}=h\left(\frac{\partial}{\partial x_i},\frac{\partial}{\partial x_j}\right)$ and ${\hat h}_{ij}={\hat h}\left(\frac{\partial}{\partial x_i},\frac{\partial}{\partial x_j}\right)$. Recall that $x_0=t$ and $\{x_i\}_{i=1}^{n-1}$ is normal coordinate near $(0,\theta_0)$  such that $\frac{\partial}{\partial x_i}|_{(0,\theta_0)}=e_i$, we have
\begin{align}\label{eq: normal_coordinates}
&h_{ij}(0,\theta_0)=\delta_{ij}, \qquad {\hat h}_{ij}(0,\theta_0) = \mu_i(\theta_0)\delta_{ij},\\
&\frac{\partial}{\partial x_k}h_{ij}(0,\theta_0)=0 \quad \text{and} \quad \nabla^h_{e_i}\frac{\partial}{\partial x_j}=0 \text{ for all } i,j,k\in\{1,\ldots,n-1\},  \nonumber
\end{align}

The metric tensor of $\hat g_{\ell,\eps}$ in coordinates $\{t, x_1,\ldots, x_{n-1}\}$ defined in a neighborhood $O_{t_0,\theta_0}$ of $(t_0,\theta_0)$ on $[0,1+\eps]\times S$ has the following entries:
\begin{align*}
&\bar g_{00} = \hat g_{\ell,\eps}\left(\frac{\partial}{\partial t}, \frac{\partial}{\partial t}\right)=1,\\
&\bar g_{0j} = \bar g_{j,0}=\hat g_{\ell,\eps}\left(\frac{\partial}{\partial t},\frac{\partial}{\partial x_j}\right)=0 \text{ for all } j\in\{1,\ldots,n-1\},\\
&\bar g_{ij} = \hat g_{\ell,\eps}\left(\frac{\partial}{\partial x_i},\frac{\partial}{\partial x_j}\right) = f_\ell(t+1+\eps)(\rho(t)h_{ij}+(1-\rho(t))\hat h_{ij}) \text{ for all } i,j\in\{1,\ldots, n-1\}.
\end{align*}

Thus, using \eqref{eq: normal_coordinates}, the Christofell symbols for $\hat g_{\ell,\eps}$ in $O_{(t_0,\theta_0)}$ are
\begin{equation*}
\Gamma^0_{ij}(t,\theta) =-\frac{1}{2}\left(f_\ell(t+1+\eps)\rho(t)\right)'h_{ij}-\frac{1}{2}\left(f_\ell(t+1+\eps)(1-\rho(t))\right)'\hat h_{ij} \text{ so }
\end{equation*}

$$\Gamma^0_{ij}(t_0,\theta_0) = \left(-\frac{1}{2}\left(f_\ell(t+1+\eps)\rho(t)\right)'-\frac{1}{2}\left(f_\ell(t+1+\eps)(1-\rho(t))\right)'\mu_i(\theta_0)\right)\Big|_{t=t_0}\delta_{ij} $$
$$\frac{\partial}{\partial x_k}\Gamma^0_{ij}(t_0,\theta_0) = -\frac{1}{2}\left(f_\ell(t+1+\eps)(1-\rho(t))\right)'|_{t=t_0}\left(\nabla^h_{e_k} \hat h\right)(e_i,e_j), $$
\begin{align*}
\Gamma^i_{jk}(t_0,\theta_0)&=\frac{1}{2}\frac{1-\rho(t_0)}{\rho(t_0)+(1-\rho(t_0))\mu_i(\theta_0)}\left((\nabla^h_{e_k}\hat h)(e_i,e_j)+(\nabla^h_{e_j}\hat h)(e_i,e_k)-(\nabla^h_{e_i}\hat h)(e_j,e_k)\right)
\end{align*}
for all $i,j,k\in\{1,\ldots, n-1\}$.

As a result, the coefficients $R^0_{ijk}(t_0,\theta_0)$ of the Riemann curvature tensor are
\begin{equation*}
R^0_{ijk}(t_0,\theta_0) =\frac{\partial}{\partial x_j}\Gamma^0_{ki}(t_0,\theta_0)-\frac{\partial}{\partial x_k}\Gamma^0_{ji}(t_0,\theta_0)+\Gamma^j_{ki}(t_0,\theta_0)\Gamma^0_{jj}(t_0,\theta_0)-\Gamma^k_{ji}(t_0,\theta_0)\Gamma^0_{kk}.
\end{equation*}
 
 Then, using \eqref{M_rho and D_h}, we have
 $$\left|R^0_{ijk}(t_0,\theta_0)\right|\leq D_{\hat h}\left(1+\frac{2(1+\mu_{\max})}{2\mu_{\min}}\right)\left(f'_\ell(t_0+1+\eps)+f_\ell(t_0+1+\eps)M'_\rho\right).$$
\end{proof}

\section{Sectional curvature for a product manifold}

\begin{lemma}\label{lemma: curvature for product}
Consider the product $(c_1,c_2)\times S$ with Riemannian metric $ds^2 = dt^2+f(t)^2g_S$ where $c_1,c_2\in\mathbb R$, $f(t)>0$ for $t\in(c_1,c_2)$, and $g_S$ is a Riemannian metric on $S$. Let $T = \frac{\partial}{\partial t}$. Then,
\begin{enumerate}
\item \label{eigenvalues for product} The shape operator on $S_t$ is given by $\frac{f'(t)}{f(t)}\textup{Id}$;
\item\label{normal for product} For any nonzero $X\in TS$, the sectional curvature of a plane $\sigma_{X,T}$ is given by
\begin{equation*}
K(\sigma_{X,Y}) = -\frac{f''(t)}{f(t)};
\end{equation*}
\item\label{level for product} For any linearly independent $X,Y\in TS$, the sectional curvature of a plane $\sigma_{X,Y}$ is given by 
\begin{equation*}
K(\sigma_{X,Y}) = \frac{1}{f(t)^2}K^{\text{int}}(g_S, \sigma_{X,Y})-\left(\frac{f'(t)}{f(t)}\right)^2;
\end{equation*}
\item\label{all for product} Let $\sigma$ be a plane which is neither tangent nor orthogonal to $S$ and can be expressed as $\sigma=\sigma_{X+aT, Y}$ for some linearly independent $X,Y\in TS$ and $a>0$. Then, the sectional curvature of $\sigma$ is given by 
\begin{equation*}
K(\sigma)=K(\sigma_{X+aT, Y}) = \frac{1}{1+a^2}K(\sigma_{X,Y})+\frac{a^2}{1+a^2}K(\sigma_{Y,T}).
\end{equation*}
\end{enumerate}
\end{lemma}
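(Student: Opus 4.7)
The plan is to handle the four parts in the order given, each reducing to a short computation from the structural results in Section~\ref{section: SFF} plus the warped product form of the metric. All four parts hinge on the explicit fact that in coordinates $(t,\theta)$ the metric decomposes as $dt^2 + f(t)^2 g_S$, so that the $t$-dependence is encoded entirely by the scalar function $f$.

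For \eqref{eigenvalues for product}, I would apply the definition \eqref{def: second_fundamental_form} directly. Since $g_t = f(t)^2 g_S$ and $d\pi_s$ trivially identifies tangent vectors on $S_t$ with those on $S_{t+s}$, differentiating yields $\II_{S_t}(X,Y) = f(t) f'(t) g_S(X,Y) = \tfrac{f'(t)}{f(t)} g_t(X,Y)$. By the defining relation of the shape operator, this gives $A(t) = \tfrac{f'(t)}{f(t)} \,\mathrm{Id}$. For \eqref{normal for product}, I would plug this into the formula $R(t) = -A'(t) - A(t)^2$ from Section~\ref{section: SFF}. A routine computation gives $R(t) = -\tfrac{f''(t)}{f(t)} \,\mathrm{Id}$, and then \eqref{def: normal sectional} yields $K(\sigma_{X,T}) = -f''(t)/f(t)$.

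For \eqref{level for product}, I would use Gauss' equation \eqref{Gauss equation} together with part \eqref{eigenvalues for product}. Since $\II_{S_t}$ is a constant multiple of $g_t$, the second-fundamental-form contribution simplifies: $\II_{S_t}(X,X) \II_{S_t}(Y,Y) - \II_{S_t}(X,Y)^2 = (f'/f)^2 \,|X \wedge Y|_t$, which cancels the $|X \wedge Y|_t$ in the denominator of \eqref{Gauss equation}. The intrinsic curvature scales as $K^{\mathrm{int}}(f^2 g_S, \sigma_{X,Y}) = f^{-2} K^{\mathrm{int}}(g_S, \sigma_{X,Y})$, yielding the stated formula.

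For \eqref{all for product}, I would apply \eqref{sectional mixed expression} already established in the paper. This reduces the assertion to showing that the mixed term vanishes: $\langle R(X,Y)Y, T \rangle = 0$ for $X, Y \in TS$. The main obstacle is verifying this identity, but it is a standard warped-product fact. I would derive the Koszul relations $\nabla_X Y = \nabla^S_X Y - f f' g_S(X,Y) T$ and $\nabla_X T = (f'/f) X$ for $X, Y \in TS$, then expand $R(X,Y)Y = \nabla_X \nabla_Y Y - \nabla_Y \nabla_X Y - \nabla_{[X,Y]} Y$ and collect the $T$-components. Using metric compatibility of $\nabla^S$ repeatedly, the $T$-coefficient simplifies to $-g_S([X,Y],Y) + g_S([X,Y],Y) = 0$. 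Substituting this vanishing into \eqref{sectional mixed expression} yields the desired decomposition.
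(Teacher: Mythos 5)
Your proposal is correct, and parts (1)--(3) track the paper's own argument exactly: compute $\II_{S_t}=f f' g_S$ from the definition, read off $A(t)=\tfrac{f'}{f}\mathrm{Id}$, feed it into $R(t)=-A'(t)-A(t)^2$ to get part (2), and use Gauss' equation with the observation that the second-fundamental-form contribution reduces to $(f'/f)^2$ after cancellation for part (3).

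For part (4) you take a genuinely different, more explicit route. Both proofs reduce the claim (via the decomposition \eqref{sectional mixed expression}, with $X,Y$ implicitly orthonormal in $g_t$) to the vanishing $\langle R(X,Y)Y,T\rangle=0$ for $X,Y\in TS$. The paper establishes this by introducing normal coordinates for $g_S$ at $\theta_0$ together with the $t$-coordinate and invoking the formula for the Riemann curvature coefficients, leaving the actual computation to the reader. You instead write down the warped-product Koszul relations $\nabla_X Y=\nabla^S_X Y-ff'g_S(X,Y)T$ and $\nabla_X T=(f'/f)X$, expand $R(X,Y)Y$, and observe that after using metric compatibility and torsion-freeness of $\nabla^S$ the $T$-component collapses to zero (your intermediate expression is a little terse, but the cancellation it describes does occur). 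The Koszul route is more self-contained and arguably cleaner since it avoids the auxiliary coordinate choice and makes the cancellation mechanism visible; the paper's route is shorter to state but defers the verification. Both are standard and correct.
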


Thus, we obtain immediately the following.

\begin{corollary}
Consider the product $(c_1,c_2)\times S$ with Riemannian metric $ds^2 = dt^2+f(t)^2g_S$ where $c_1,c_2\in\mathbb R$, $f(t)>0$ for $t\in(c_1,c_2)$, and $g_S$ is a Riemannian metric on $S$. Then,
\begin{enumerate}
\item $ds^2$ has negative curvature if and only if $f''(t)>0$ and $f'(t)^2>K^{\text{int}}(g_S,\sigma)$ for all $t\in(c_1,c_2)$ and any plane $\sigma$ tangent to $S$;
\item if $ds^2$ has constant negative curvature $-\kappa^2$, then 
\begin{equation*}
f(t)=a_\kappa\sinh(\kappa t)+b_\kappa\cosh(\kappa t) \quad\text{where}\quad t\in(c_1,c_2)
\end{equation*}
for some $a_\kappa,b_\kappa\in \mathbb R$ such that $a_\kappa\tanh(\kappa a_\kappa)>-b_\kappa$.
\end{enumerate}
\end{corollary}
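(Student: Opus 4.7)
The plan is to deduce both items of the corollary directly by specializing the case-by-case curvature formulas of Lemma~\ref{lemma: curvature for product} to a warped product, organizing tangent $2$-planes by type.

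For part (1), I would split an arbitrary tangent $2$-plane $\sigma$ at $(t_0, \theta_0)$ into three types. By Lemma~\ref{lemma: curvature for product}\eqref{normal for product}, an orthogonal plane $\sigma_{X,T}$ has curvature $-f''(t_0)/f(t_0)$, which (using $f>0$) is negative iff $f''(t_0)>0$. By \eqref{level for product}, a tangent plane $\sigma_{X,Y}\subseteq TS_{t_0}$ has curvature $f(t_0)^{-2} K^{\mathrm{int}}(g_S, \sigma_{X,Y}) - (f'(t_0)/f(t_0))^2$, which is negative iff $f'(t_0)^2 > K^{\mathrm{int}}(g_S, \sigma_{X,Y})$. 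By \eqref{all for product}, the curvature of any remaining (mixed) plane $\sigma_{X+aT,Y}$ is a convex combination of the previous two curvatures, so it is automatically negative once both are. Collecting these three observations yields the claimed iff characterization.

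For part (2), imposing $K \equiv -\kappa^2$ in the normal-plane formula \eqref{normal for product} gives the linear ODE $f''(t) = \kappa^2 f(t)$, whose general solution is $f(t) = a_\kappa\sinh(\kappa t) + b_\kappa\cosh(\kappa t)$. A direct substitution into \eqref{level for product} then shows $f'(t)^2 - \kappa^2 f(t)^2 \equiv \kappa^2(a_\kappa^2 - b_\kappa^2)$ is constant, so the tangent-plane condition merely forces $g_S$ to have constant sectional curvature $\kappa^2(a_\kappa^2 - b_\kappa^2)$ and imposes no new restriction on $f$. The stated inequality $a_\kappa\tanh(\kappa a_\kappa) > -b_\kappa$ should come from enforcing positivity of $f$ on $(c_1, c_2)$ at a suitable reference point, since the factorization $f(a_\kappa) = \cosh(\kappa a_\kappa)(a_\kappa\tanh(\kappa a_\kappa) + b_\kappa)$ rewrites positivity at $t = a_\kappa$ in exactly this form.

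The only mildly subtle point is checking in part (1) that mixed planes create no new condition, which is immediate from the convex-combination identity \eqref{all for product}. I anticipate no genuine obstacle beyond the routine algebra outlined above.
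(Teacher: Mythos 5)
Your proposal is correct and matches the paper's intended reading: the corollary is stated immediately after Lemma~\ref{lemma: curvature for product} with the remark ``Thus, we obtain immediately the following,'' so specializing the three curvature formulas (orthogonal, tangent, mixed) is precisely the argument, and your observation that mixed planes contribute nothing new because of the convex-combination identity is the right one. Part (2) via the ODE $f'' = \kappa^2 f$, together with the constancy of $f'^2 - \kappa^2 f^2 = \kappa^2(a_\kappa^2 - b_\kappa^2)$, is also correct.

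The one point to be careful about is the inequality $a_\kappa\tanh(\kappa a_\kappa) > -b_\kappa$. Your factorization $f(a_\kappa) = \cosh(\kappa a_\kappa)\bigl(a_\kappa\tanh(\kappa a_\kappa) + b_\kappa\bigr)$ is correct algebra, but positivity of $f$ on $(c_1,c_2)$ only forces $f(a_\kappa) > 0$ if $a_\kappa \in (c_1,c_2)$, which the statement does not assume (indeed $a_\kappa$ is a coefficient, not a priori a point in the interval). So as written this step does not quite close. The paper itself offers no derivation of this inequality, and the condition as stated is somewhat opaque; your reconstruction is a reasonable guess at the intent, but you should flag that it requires $a_\kappa \in (c_1,c_2)$, or else re-express the positivity condition in terms of the actual endpoints $c_1, c_2$ (noting that $f$ of this form has at most one zero on $\mathbb{R}$, so positivity is governed by the sign at either endpoint).
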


\begin{proof}[Proof of Lemma~\ref{lemma: curvature for product}]
We have that $\II_{S_t} = f'(t)f(t)g_S$ and hence, from definition, the shape operator if given by
$$A(t,\theta) = \frac{f'(t)}{f(t)}\text{Id}.$$  
By \eqref{def: normal sectional}, we obtain Lemma~\ref{lemma: curvature for product}\eqref{normal for product}. Since 
$$K^{\text{int}}(f(t)^2g_S,\sigma_{X,Y}) = \frac{1}{f(t)^2}K^{\text{int}}(g_S,\sigma_{X,Y})$$
 for any linearly independent $X,Y\in TS$, by \eqref{Gauss equation}, we obtain Lemma~\ref{lemma: curvature for product}\eqref{level for product}.

Let $X,Y\in T_{(t_0,\theta_0)}S$. We have that the $t$-coordinate and normal coordinates on $S$ for $g_S$ at $(t_0,\theta_0)$ define coordinates on $(c_1,c_2)\times S$. Using those coordinates and the definition of Riemann curvature coefficients, we can obtain that $\langle R(X,Y)Y,T\rangle=0$. Thus, by \eqref{sectional curvature}, we obtain Lemma~\ref{lemma: curvature for product}\eqref{all for product}.
\end{proof}

\section{$C^1$-gluing for functions of special type}\label{app: combine f with construction 2}

\begin{lemma}\label{lemma: combine f with constant 2}
Let $f_\ell(t) = \frac{e^{\ell t}-1}{\ell}$ and let
$$u_\kappa(t)=\frac{1}{\kappa^2}\left(a\sinh(\kappa t)+b\cosh(\kappa t)\right)^2$$
 where $a,b\in\mathbb R$ are such that $a^2+b^2\neq 0$. For any $\tau>0$ there exists $L=L(\tau,a,b)>0$ such that for all $\ell>L$ there exist $\kappa\in\mathbb R$ and $r>-\tau$ such that $f_\ell(\tau)=u_\kappa(\tau+r)$ and $f'_\ell(\tau) = u'_\kappa(\tau+r)$. Moreover, $-\kappa^2\rightarrow-\infty$ as $\ell\rightarrow\infty$. 
\end{lemma}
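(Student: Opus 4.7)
The plan is to reduce the two $C^1$-matching conditions $f_\ell(\tau)=u_\kappa(\tau+r)$ and $f_\ell'(\tau)=u_\kappa'(\tau+r)$ to a single explicit equation for $\kappa$, by exploiting the identity $\cosh^2-\sinh^2=1$. Setting $s:=\tau+r$ and
\[
p := a\sinh(\kappa s)+b\cosh(\kappa s),\qquad q := a\cosh(\kappa s)+b\sinh(\kappa s),
\]
one has $u_\kappa(s)=p^2/\kappa^2$, $u_\kappa'(s)=2pq/\kappa$, and $q^2-p^2=a^2-b^2$. The matching conditions become $p^2=\kappa^2 f_\ell(\tau)$ and $2pq=\kappa f_\ell'(\tau)$. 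Squaring the second and substituting the first gives $q^2=(f_\ell'(\tau))^2/(4f_\ell(\tau))$, and combining this with $q^2-p^2=a^2-b^2$ yields the closed-form expression
\[
\kappa^2 \;=\; \frac{(f_\ell'(\tau))^2-4(a^2-b^2)f_\ell(\tau)}{4\,f_\ell(\tau)^2}
\;=\;\frac{\ell^2 e^{2\ell\tau}-4\ell(a^2-b^2)(e^{\ell\tau}-1)}{4(e^{\ell\tau}-1)^2}.
\]

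For $\ell\to\infty$ the right-hand side behaves like $\ell^2/4$, so there exists $L=L(\tau,a,b)>0$ such that for every $\ell>L$ this quantity is positive and unbounded; we may therefore take $|\kappa|$ to be its positive square root, which secures both $\kappa\in\mathbb R$ and $-\kappa^2\to-\infty$ as $\ell\to\infty$.

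It remains to find $s>0$ (equivalently $r>-\tau$) realizing the prescribed values of $p$ and $q$, whose signs are linked by $2pq=\kappa f_\ell'(\tau)$. Using the explicit representation
\[
p(s)\;=\;\tfrac{a+b}{2}e^{\kappa s}+\tfrac{b-a}{2}e^{-\kappa s},
\]
I choose the sign of $\kappa$ so that the dominant exponential as $s\to+\infty$ has nonzero coefficient: if $a+b\neq 0$ take $\kappa>0$, while in the remaining case $a+b=0$ (so $b=-a\neq 0$) take $\kappa<0$ instead, so that the dominant term $\tfrac{b-a}{2}e^{-\kappa s}$ grows. In either case $|p(s)|\to\infty$ monotonically for $s$ large (since $p'(s)=\kappa q(s)$ has a definite sign eventually), so by the intermediate value theorem one can select $s>0$ with $p(s)$ equal to the prescribed value $\pm\kappa\sqrt{f_\ell(\tau)}$ having the correct sign; the relation $2p(s)q(s)=\kappa f_\ell'(\tau)$ then forces $q(s)$ to equal the corresponding prescribed value automatically. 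Setting $r:=s-\tau$ delivers the claimed solution with $r>-\tau$.

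The whole argument is elementary algebra together with a single application of the intermediate value theorem, and no result from the main body of the paper is needed. The only point worth flagging is the sign choice for $\kappa$ in the degenerate case $a+b=0$: the naive choice $\kappa>0$ would force $s$ to $-\infty$ because $p(s)=-ae^{-\kappa s}$ then decays to $0$ rather than growing, and one must switch to $\kappa<0$ to recover the required growth.
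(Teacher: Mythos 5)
Your proof is correct and arrives at exactly the same closed-form expression for $\kappa^2$ as the paper, but organizes the algebra in a genuinely different way. The paper expands $u_\kappa$ using double-angle identities, writes the two matching conditions as a $2\times 2$ linear system in $\cosh(2\kappa(\tau+r))$ and $\sinh(2\kappa(\tau+r))$, inverts that system (which degenerates when $a^2=b^2$ and must be handled separately), and then imposes $\cosh^2-\sinh^2=1$ to extract $\kappa^2$, finally reading off $r$ through $\sinh^{-1}$. Your substitution $p=a\sinh(\kappa s)+b\cosh(\kappa s)$, $q=p'/\kappa=a\cosh(\kappa s)+b\sinh(\kappa s)$ bypasses both the double-angle expansion and the matrix inversion: the invariant $q^2-p^2=a^2-b^2$ together with $p^2=\kappa^2 f_\ell(\tau)$ and $2pq=\kappa f_\ell'(\tau)$ yields $\kappa^2$ in one line, and your degenerate case ($a+b=0$, forced by the exponential representation of $p$) is different from and simpler than the paper's. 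One point to tighten: the sentence asserting that the relation $2p(s)q(s)=\kappa f_\ell'(\tau)$ "forces $q(s)$ to equal the prescribed value automatically" is stated backwards, since that relation is what needs verifying. The invariant only gives $|q(s)|=|q_0|$; to fix the sign you should observe that the intermediate-value-theorem $s$ is necessarily large once $\ell$ is large (because the target $|p_0|=|\kappa|\sqrt{f_\ell(\tau)}$ eventually exceeds $|p(0)|=|b|$), and for large $s$ the dominant exponential in $p(s)$ and $q(s)$ is the same, so $p(s)q(s)$ has the sign of $\kappa$ exactly as required. This is a routine patch and does not affect the soundness of your approach.
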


\begin{proof}
To prove the lemma we need to solve the following system of equations:
\begin{equation*}
\left\{
\begin{aligned}
&2\frac{e^{\ell \tau}-1}{\ell}\kappa^2+a^2-b^2 = 2ab\sinh(2\kappa(\tau+r))+(a^2+b^2)\cosh(2\kappa(\tau+r)),\\
& e^{\ell\tau}\kappa = (a^2+b^2)\sinh(2\kappa(\tau+r))+2 ab\cosh(2\kappa(\tau+r)).
\end{aligned}
\right.
\end{equation*}
Let $p=2\frac{e^{\ell \tau}-1}{\ell}\kappa^2+a^2-b^2$ and $q=e^{\ell\tau}\kappa$.

Thus, if $a^2=b^2$ then 
\begin{equation*}
\left\{
\begin{aligned}
&\kappa = \frac{ab}{a^2+b^2}\frac{\ell}{1-e^{-\ell\tau}}, \\
& r = -\tau+\frac{1-e^{-\ell\tau}}{\ell}\ln\left(\frac{e^{\ell\tau}-1}{\ell}\cdot\frac{2\kappa^2}{a^2+b^2}\right)>-\tau.
\end{aligned}
\right.
\end{equation*}

Otherwise,
\begin{equation*}
\left\{
\begin{aligned}
&\cosh(2\kappa(\tau+r)) = \frac{(a^2+b^2)p-2abq}{(a^2-b^2)^2}, \\
&\sinh(2\kappa(\tau+r)) = \frac{(a^2+b^2)q-2abp}{(a^2-b^2)^2}.
\end{aligned}
\right.
\end{equation*}

Notice that there exists $L'=L'(\tau, a, b)>0$ such that for all $\ell>L'$ we have $e^{2\ell\tau}-4\frac{e^{\ell\tau}-1}{\ell}(a^2-b^2)>0$. Using the fact that $\cosh(2\kappa(\tau+r))^2-\sinh(2\kappa(\tau+r))^2=1$, we obtain that for all $\ell>L$ there exists a solution
\begin{equation*}
\left\{
\begin{aligned}
\kappa &= \frac{\sqrt{e^{2\ell\tau}-4\frac{e^{\ell\tau}-1}{\ell}(a^2-b^2)}}{2\frac{e^{\ell\tau}-1}{\ell}}\sim\frac{\ell}{2}\rightarrow \infty \quad\text{as}\quad\ell\rightarrow\infty, \\
r &=-\tau+\frac{1}{2\kappa} \sinh^{-1}\left(\frac{(a^2+b^2)q-2abp}{(a^2-b^2)^2}\right)\\&\sim-\tau+\frac{1}{\ell}\sinh^{-1}\left(\frac{1}{2}(a-b)^2e^{\ell\tau}\ell+2ab(\ell-a^2+b^2)\right)>-\tau\quad \quad\text{as}\quad\ell\rightarrow\infty.
\end{aligned}
\right.
\end{equation*}

Thus, there exists $L=L(\tau,a,b)>0$ required by the lemma.
\end{proof}

\section{Constants in the construction of $M^{ext}$} \label{app: const_in_M_ext}

\begin{enumerate}
\item [(C1)] $M_1=Q_0+4$;
\item [(C2)] \label{cond: eps} $\eps<\frac{2}{\lambda_{\min}(\partial\Sigma)}\ln\cosh\frac{\lambda_{\min}(\partial\Sigma)}{4K_g+4(Q_0+2)^2}$, where $K_g$ comes from Proposition~\ref{prop: C11 extension};
\item [(C3)]\label{cond: delta} $\delta<\min\{\delta_0,\frac{\eps}{2},\frac{2}{\lambda_{\min}(\partial\Sigma)}\ln\cosh\frac{\lambda_{\min}(\partial\Sigma)}{8K_0+8(Q_0+1)^2}\}$ where $K_0$ comes from Proposition~\ref{prop: smoothify} and depends on $\eps$ and $\ell$;
\item [(C4)] $R:=\frac{2}{(Q_0+1)^2}+1+\frac{C_0}{Q_0+2}+\frac{2}{Q_0+3}\tanh^{-1}\frac{Q_0+2}{Q_0+3}.$
\end{enumerate}

\bibliography{Riemannian_Anosov_extension_-_2nd_arxiv}{}
\bibliographystyle{alpha}

\Addresses
\end{document}